\theoremstyle{plain}
\newtheorem{thm}{Theorem}[section]
\newtheorem{theorem}[thm]{Theorem}
\newtheorem{lemma}[thm]{Lemma}
\newtheorem{proposition}[thm]{Proposition}
\theoremstyle{definition}
\newtheorem{remark}[thm]{Remark}
\newtheorem{example}[thm]{Example}
\newtheorem{conjecture}[thm]{Conjecture}
\newtheorem{question}[thm]{Question}
\numberwithin{equation}{section}
\newcommand{\sfD}{{\sf{D}}}
\newcommand{\sfT}{{\sf{T}}}
\newcommand{\UC}{{\rm{UC}}}
\newcommand{\Bir}{{\rm{Bir}}}
\newcommand{\Aut}{{\rm{Aut}}}
\newcommand{\bbPGL}{{\rm{PGL}}}
\newcommand{\GL}{{\rm{GL}}}
\newcommand{\R}{{\mathbf{R}}}
\newcommand{\C}{{\mathbf{C}}}
\newcommand{\Q}{{\mathbf{Q}}}
\newcommand{\Z}{{\mathbf{Z}}}
\newcommand{\NS}{{\rm{NS}}}
\newcommand{\bbP}{{\mathbb{P}}}
\newcommand{\Sph}{{\mathbb{S}}}
\newcommand{\BP}{{\mathbb P}}
\title [Calabi-Yau manifolds of Wehler type]{Birational automorphism groups and the movable cone theorem for Calabi-Yau manifolds of Wehler type via universal Coxeter groups}
\author{Serge Cantat and Keiji Oguiso}
\address{Serge Cantat, DMA, ENS-Ulm, UMR 8553 du CNRS \\
45 rue d'Ulm, F-75230 Paris cedex 05 
}
\email{serge.cantat@univ-rennes1.fr}
\address{
Keiji Oguiso, Department of Mathematics, Osaka University\\
Toyonaka 560-0043 Osaka, Japan and  Korea Institute for Advanced Study, Hoegiro 87, Seoul, 130-722, Korea} \email{oguiso@math.sci.osaka-u.ac.jp}
\dedicatory{Dedicated to Professor Yujiro Kawamata on the occasion of his
sixtieth birthday.}
\subjclass[2010]{14J32, 14J40, 14E07, 14E09}
\thanks{supported by JSPS Program 22340009 and by KIAS Scholar Program}
\begin{document}

\maketitle

\begin{abstract} Thanks to the theory of Coxeter groups, we produce the first family of Calabi-Yau manifolds $X$ of arbitrary dimension $n$, for which $\Bir(X)$ is infinite and the Kawamata-Morrison movable cone conjecture is satisfied. For this family, the movable cone is explicitly described; it's fractal nature is related to limit sets of Kleinian groups and to the Apollonian Gasket. Then, we produce explicit examples of (biregular) automorphisms with positive entropy  on some Calaby-Yau varieties.  \\

This version of the paper contains no picture ; see \cite{CaOg} for a file with pictures.  \end{abstract}

%%%%%%%%%%%%%%%%%%%%%%%%%%%%%%%%%%%%%%%%%%%%%%%%%%%%%%%%
\section{Introduction}
%%%%%%%%%%%%%%%%%%%%%%%%%%%%%%%%%%%%%%%%%%%%%%%%%%%%%%%%
%%%%%%
\subsection{Coxeter groups}\label{intro:cox}
%%%%%%
Coxeter groups (see eg. \cite{Hum}, \cite{Vi}) play a fundamental role in group theory.  Among all Coxeter groups generated by $N$ involutions, the {\bf{universal Coxeter group}} of rank $N$
\[
\UC(N) := \underbrace{       {\mathbf Z}/2{\mathbf Z} *  {\mathbf Z}/2{\mathbf Z} * \cdots * {\mathbf Z}/2{\mathbf Z}}_{N}\,\, ,
\]
where ${\mathbf Z}/2{\mathbf Z}$ is the cyclic group of order $2$, is the most basic one : there is no non-trivial relation between its $N$ natural generators, hence every Coxeter group is a quotient of some $\UC(N)$. 
The group $\UC(1)$ coincides with ${\mathbf Z}/2{\mathbf Z}$, and $\UC(2)$ is isomorphic to the infinite dihedral group; in particular, $\UC(2)$ is almost abelian in the sense that it contains a cyclic abelian subgroup $\mathbf Z$ as an index $2$ subgroup. For $N \ge 3$, $\UC(N)$ contains the free group ${\mathbf Z} * {\mathbf Z}$. 

Given a Coxeter diagram with $N$ vertices and its associated Coxeter group $W$, one can construct a real vector space $V$ of dimension $N$, together with a quadratic
form $b$, and a linear representation $W\to \GL(V)$ that preserves $b$ (see \S \ref{par:Part2}). We shall refer to this representation $W\to \GL(V)$ as the {\bf{geometric representation}} of $W$. The group 
$W$ preserves a convex cone $\sfT\subset V$, called the {\bf{Tits cone}}, that contains an explicit sub-cone $\sfD\subset \sfT$ which is a fundamental domain for the action of $W$ on $\sfT$. A precise description of 
the geometric representation and the invariant cone for $W=\UC(N)$ is given in Section \ref{par:UCN}. In particular, for $N\geq 3$, the quadratic form $b$ is non-degenerate and has signature $(1,N-1)$, and the fundamental domain $\sfD$ is a cone over a simplex.

%%%%%%%
\subsection{Wehler varieties}\label{par:introwehler}
%%%%%%%
Coxeter groups also appear in the theory of algebraic surfaces, mostly as hyperbolic reflection groups acting on N\'eron-Severi groups (see for instance \cite{Ni}, \cite{Bor}, \cite{CaDo}, \cite{Do}, \cite{Mc2}, \cite{To}). 

Let $n$ be a positive integer. Let $X$ be a smooth complex hypersurface of $(\bbP^1)^{n+1}$. The adjunction formula implies that the Kodaira
dimension of $X$ vanishes if and only if $X$ has multi-degree $(2,2,\ldots, 2)$, if and only if the canonical bundle of $X$ is trivial. 
These smooth hypersurfaces $X\subset (\bbP^1)^{n+1}$ of degree $(2,2,\ldots, 2)$ will be called {\textbf{Wehler varieties}} in what follows.

Let $X$ be such a Wehler variety. The $n+1$ projections $p_i\colon X\to ({\bbP^1})^{n}$ which are obtained
by forgetting one coordinate are ramified coverings of degree $2$; thus, for each index $i$, there is a birational transformation 
\[
\iota_i\colon X \dasharrow X
\]
that permutes the two points in the fibers of $p_i$. This provides a morphism  
\[
\Psi\colon\UC(n+1)\to \Bir(X)
\] 
into the group of birational transformations of $X$. 

\begin{remark}
Wehler surfaces and their variants  appear in the study of complex dynamics 
and arithmetic dynamics as handy, concrete examples (\cite{Ca}, \cite{Mc1}, \cite{Sil}). 
\end{remark}
\subsubsection{Curves and surfaces}\label{par:intro-curvesurfaces}
In dimension $n=1$, the hypersurface $X$ is an elliptic curve $\C/\Lambda$, and the image of the dihedral group $\UC(2)$ is 
contained in the semi-direct product of ${\Z}/2\Z$, acting by $z\mapsto -z$ on $X$, and $\C/\Lambda$, acting by translations.

For $n=2$, $X$ is a surface of type K3. Since the group of birational transformations of a K3 surface coincides with its
group of (biregular) automorphisms, the image of $\UC(3)$ is contained in $\Aut(X)$.  From a paper of Wehler, specifically from the last three lines in \cite{We}, one can deduce the following properties.
\begin{itemize}
\item The morphism $\Psi\colon \UC(3)\to \Aut(X)$ is injective;
\item if $X$ is generic, the dimension of its N\'eron-Severi group $\NS(X)$ is equal to $3$ and the linear representation of $\Aut(X)$ in $\GL(\NS(X)\otimes \R)$ is conjugate to the geometric representation of $\UC(3)$;
\item if $X$ is generic, the ample cone coincides with one connected
component of the set of vectors $u\in \NS(X)\otimes \R$ with self-intersection $u\cdot u > 0$.\end{itemize}

See Section~\ref{par:WSurf} for the proof of a slightly more general result. For simplicity, we refer to these statements as Wehler's theorem, even if they are
not contained in Wehler's paper.  

\subsubsection{Higher dimensional Wehler varieties} 
The aim of this  article is to extend the results of the previous section, in the birational and biregular ways, 
for Wehler varieties $X$ of higher dimension. For $n\geq 3$, $X$ is a {\textbf{Calabi-Yau manifold}} of dimension $n$. By definition, this means that $X$
is simply connected, there is no holomorphic $k$-form on $X$ for $0 < k < n$, and there is a nowhere vanishing holomorphic $n$-form $\omega_X$; hence, 
$$
H^0(X, \Omega_X^k) = 0, \; \forall k\in\{1, \ldots, n-1\}, \quad {\text{and}} \quad  H^0(X, \Omega_X^d) = \C\omega_X.
$$
Projective K3 surfaces are Calabi-Yau manifolds of dimension $2$. 

Before stating our main results, we need to recall the definition of 
various cones in $\NS\, (X)\otimes \R$ and to describe the movable cone conjecture of Morrison and Kawamata.  

%%%%%%%
\subsection{The movable cone conjecture} 
%%%%%%%
The precise formulation of the Kawamata-Morrison movable cone conjecture may be found in 
\cite{Ka2} (see Conjecture (1.12)). One can also consult \cite{Mo} for the biregular version of the conjecture and relation with mirror symmetry,
as well as \cite{To} for a more general version of these conjectures. In this short section, we introduce the main players and describe the conjectures.

\subsubsection{Cones (see \cite{Ka2})}\label{par:cones}
Let $M$ be a complex projective manifold. An integral divisor $D$ on $M$ is  {\textbf{effective}}
if the complete linear system $\vert D \vert$ is not empty; it is ${\mathbf Q}$-{\textbf{effective}} if there exists a positive integer 
$m$ such that $mD$ is effective; and it is {\textbf{movable}} if  $\vert D \vert$ has no fixed components, so that the base locus of
$\vert D\vert$ has codimension at least $2$. 

The N\'eron-Severi group $\NS(X)$ is the group of classes of divisors modulo numerical equivalence; given a ring ${\mathbf{A}}$, 
for example ${\mathbf{A}}\in \{\Z,\Q,\R,\C\}$, we denote by 
$\NS(X; {\mathbf{A}})$ the group $\NS(X)\otimes_{\Z} {\mathbf{A}}$. 

The {\textbf{effective cone}} ${\mathcal B}^e\, (M)\subset \NS(X;\R)$ 
is the convex cone generated by the classes of effective divisors.
The {\textbf{big cone}} ${\mathcal B}\, (M)$ (resp. the {\bf{ample cone}} ${\rm Amp}(X)$) is the convex cone generated by 
the classes of big (resp. ample) divisors. The {\bf{nef cone}} 
$\overline{{\rm Amp}}\, (M)$ is the closure of the ample cone, and ${\rm Amp}\, (M)$ 
is the  interior of the nef cone. 
Note that ${\mathcal B}\, (M) 
\subset {\mathcal B}^e\, (M)$ and the {\textbf{pseudo-effective cone}} $\overline{{\mathcal B}}\, (M)$ is the closure of both 
${\mathcal B}\, (M)$ and ${\mathcal B}^e\, (M)$.

The {\textbf{movable cone}}
$\overline{{\mathcal M}}\, (M)$ is the closure of the convex cone generated by 
the classes of movable divisors.  
The {\textbf{movable effective cone}} ${\mathcal M}^e\, (M)$ is defined to be 
$\overline{{\mathcal M}}\, (M) \cap {\mathcal B}^e\, (M)$. 

 Both the nef cone and the movable cone 
involve taking closures. 
So, {\it a priori}, $\overline{{\mathcal M}}\, (M)$ is not necessarily a subset of ${\mathcal B}^e\, (M)$; similarly, $\overline{{\rm Amp}}\, (M)$ is not necessarily a subset of ${\mathcal B}^e\, (M)$ nor of ${\mathcal M}^e\, (M)$, while it is always true that 
$\overline{{\rm Amp}}\, (M) \subset \overline{{\mathcal M}}\, (M)$. 

\subsubsection{The cone conjecture}
Assume that $M$ is a Calabi-Yau manifold. All birational transformations of $M$ are isomorphisms in codimension
one (see \S \ref{par:wv} below); in other words, the group of birational transformations coincides with the group of {\textbf{pseudo-automorphisms}}.
Thus, if $g$ is an element of $\Bir(M)$ and $D$ is movable (resp. effective, $\mathbf Q$-{effective}), then 
$g^*(D)$ is again movable (resp. effective, $\mathbf Q$-{effective}). As a consequence, $\Bir\, (M)$ naturally acts on the
three cones $\overline{{\mathcal M}}\, (M)$, ${\mathcal B}^e\, (M)$ and ${\mathcal M}^e\, (M)$.

The abstract version of the Morrison-Kawamata movable cone conjecture is the following:

\begin{conjecture}[Morrison and Kawamata]\label{morrison-kawamata}
The action of $\Bir\,(M)$ 
on the movable effective cone ${\mathcal M}^{e}(M)$ has a finite rational polyhedral cone 
$\Delta$ as a fundamental domain. Here $\Delta$ is called 
a fundamental domain if
\[
g^*(\Delta^{\circ}) \cap \Delta^{\circ} = \emptyset\, ,\, 
\forall g\in \Bir(X) \, \,  {\text{with}} \, \, g^* \not= {\text{Id}}\,\, ,
\]
where $\Delta^{\circ}$ is the interior of $\Delta$, and  
\[
\Bir\, (M) \cdot \Delta = {\mathcal M}^e\, (M)\, .
\]
\end{conjecture}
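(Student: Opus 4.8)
The statement to establish is that Conjecture~\ref{morrison-kawamata} holds for a generic Wehler variety $X$ of dimension $n\ge 3$. The plan is to transport the conjecture entirely into the geometric representation of $\UC(n+1)$, where the fundamental chamber $\sfD$ is already known to be a rational polyhedral fundamental domain for the Tits cone $\sfT$.

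First I would fix the linear algebra on $\NS(X;\R)$. For generic $X$ one expects the rank to be exactly $n+1$, with the restrictions $H_0,\dots,H_n$ of the $n+1$ hyperplane classes of the factors as a basis. A direct computation on the double cover $p_i\colon X\to(\bbP^1)^n$ gives $\iota_i^*H_j=H_j$ for $j\ne i$ and $\iota_i^*H_i=-H_i+2\sum_{j\ne i}H_j$. Writing $\alpha_i=2H_i-\sum_k H_k$, these formulas say precisely that $\iota_i^*$ is the reflection of root $\alpha_i$ for the symmetric form $b$ determined by $b(H_i,H_i)=2-n$ and $b(H_i,H_j)=1$ ($i\ne j$); an eigenvalue computation gives signature $(1,n)$, and the relation $b(\alpha_i,\alpha_j)=-b(\alpha_i,\alpha_i)$ shows that each pair $\langle\iota_i^*,\iota_j^*\rangle$ is infinite dihedral, so that the representation of $\UC(n+1)$ on $\NS(X;\R)$ is exactly the geometric one. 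Injectivity of $\Psi$ then follows from faithfulness of the geometric representation for $N\ge 3$, exactly as in Wehler's theorem; and one must check, via a Noether--Lefschetz / monodromy argument, that a generic $X$ has Picard rank $n+1$ and that $\Bir(X)\to\GL(\NS(X;\R))$ has image precisely this copy of $\UC(n+1)$.

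Next I would identify the cones. The ample class $S=\sum_k H_k$ satisfies $b(S,\alpha_i)<0$ for all $i$, so the fundamental chamber is $\sfD=\{x:b(x,\alpha_i)\le 0\ \forall i\}$, and the claim is that $\sfD$ is exactly the nef cone $\overline{{\rm Amp}}(X)$. The key geometric input is that for generic $X$ the only codimension-one walls of the movable cone $\overline{\sM}(X)$ are the hyperplanes $\alpha_i^\perp$, across which $\iota_i$ realizes a flop of $X$; thus $X$ is the marked minimal model whose nef cone is $\sfD$. Since every birational transformation of the Calabi-Yau $X$ is an isomorphism in codimension one, $\Bir(X)=\UC(n+1)$ permutes the nef cones of the marked minimal models, which are precisely the chambers $g^*\sfD$ ($g\in\UC(n+1)$); their union is $W\cdot\sfD=\sfT$. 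Intersecting with the effective cone, I would conclude that the movable effective cone $\sM^e(X)$ coincides with the Tits cone $\sfT$ --- its boundary meeting the light cone along the fractal limit set of $\UC(n+1)$ that also bounds the pseudo-effective cone.

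It then remains to take $\Delta=\sfD$. Since $\sfD$ is a cone over a simplex it is finite rational polyhedral, and the two defining properties of a fundamental domain are inherited verbatim from Coxeter theory: $\sfD$ is a fundamental domain for $W$ acting on $\sfT$, so $g^*(\sfD^\circ)\cap\sfD^\circ=\emptyset$ whenever $g^*\ne\mathrm{Id}$, and $\Bir(X)\cdot\sfD=\sfT=\sM^e(X)$. I expect the decisive difficulty to be the cone identification of the third paragraph: proving that the walls of $\overline{\sM}(X)$ are exactly the $\alpha_i^\perp$ (no unexpected contractions or extra flopping walls), that the nef cone is exactly $\sfD$, and that the chamber decomposition of $\overline{\sM}(X)$ into nef cones of minimal models matches the Coxeter tessellation. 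Controlling the effective and movable divisors, and ruling out extra pseudo-automorphisms beyond $\UC(n+1)$ for generic $X$, is where the genuine birational geometry --- as opposed to the formal Coxeter combinatorics --- has to be carried out.
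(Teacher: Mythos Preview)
Your overall architecture matches the paper's: identify the $\Bir(X)$-action on $\NS(X;\R)$ with the geometric representation of $\UC(n+1)$, identify $\overline{\rm Amp}(X)$ with the fundamental chamber $\sfD$, and then invoke the standard fact that $\sfD$ is a rational polyhedral fundamental domain for the Tits cone $\sfT$. Two technical corrections: for $n\ge 3$ the Picard rank is computed by the Lefschetz hyperplane theorem (no Noether--Lefschetz argument is needed or appropriate), and the identification $\overline{\rm Amp}(X)=\sfD$ is obtained in the paper not by a wall analysis but from Koll\'ar's result that for a smooth anticanonical section of a Fano the nef cone is pulled back from the ambient space.

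The one substantive divergence is in how you propose to prove ${\mathcal M}^e(X)=\sfT$. You want to use the chamber decomposition of the movable cone into nef cones of marked minimal models and then ``intersect with the effective cone''. This is heavier than necessary and leaves a loose end: you would need that every movable effective class (including non-big boundary classes) lies in the nef cone of some marked minimal model, which is not automatic from BCHM-type finiteness and is essentially circular with what you are trying to prove. The paper instead gives a completely elementary descent (its Lemma~\ref{lem2}): write an effective integral class as $D=\sum a_j h_j$; the nefness of the $h_j$ forces $a_i+a_j\ge 0$ for $i\ne j$, so at most one $a_i$ is negative and $s(D)=\sum a_j\ge 0$; if $a_i<0$ then $\iota_i^*D$ is still effective and has strictly smaller $s(\cdot)$, so after finitely many steps $D$ lands in $\overline{\rm Amp}(X)$. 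Convexity of the Tits cone then absorbs real convex combinations, yielding ${\mathcal M}^e(X)\subset\Phi(\sfT)$. This avoids any appeal to the minimal model program beyond what is already used to compute $\Bir(X)$, and handles the boundary of the cone for free. Replacing your third paragraph by this descent would close the gap.
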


This conjecture holds for log K3 surfaces (\cite{To}) and abelian varieties (\cite{Ka2}, \cite{PS}), and its relative version has been verified for fibered Calabi-Yau threefolds (\cite{Ka2}). The conjecture is also satisfied for several interesting examples of Calabi-Yau threefolds: See for instance  \cite{GM} for the biregular situation and \cite{Fr} for the birational version. A version of the conjecture is proved in \cite{Mar} for compact hyperk\"ahler manifolds.

%%%%%%
\subsection{The movable cone conjecture for Wehler varieties}
%%%%%%
Our first main result is summarized in the following statement.

\begin{theorem}\label{main1}
Let $n \ge 3$ be an integer. Let $X$ be a generic hypersurface of multi-degree $(2, \ldots, 2)$ in $({\bbP^1})^{n+1}$. Then,
\begin{enumerate}
\item the   automorphism group  $\Aut(X)$ is trivial, i.e. $\Aut\, (X) = \{{\rm{Id}}_X \}$; 

\item The morphism $\Psi$ that maps each generator $t_j$ of $\UC(n+1)$ to the involution $\iota_j$ of $X$ is an
isomorphism $\Psi\colon  \UC(n+1)\to \Bir\,(X)$;

\item  $X$ satisfies Conjecture \ref{morrison-kawamata}: The cone $\overline{{\rm Amp}}\, (X)$ is a fundamental domain for the action of $\Bir\,(X)$ on the movable effective cone ${\mathcal M}^e(X)$; this fundamental domain is a cone over a simplex. 
\end{enumerate}
More precisely, there is a linear conjugacy between the (dual of the) geometric representation of $\UC(n+1)$ and the representation of $\Bir(X)$ on $\NS(X)$ that maps equivariantly the Tits cone $\sfT$ to the movable cone and the
fundamental cone $\sfD$ to the nef cone $\overline{{\rm Amp}}\, (M)$.
\end{theorem}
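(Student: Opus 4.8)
The plan is first to identify $\NS(X;\R)$ together with the linear action of the involutions $\iota_i$, producing the linear conjugacy with the dual geometric representation of $\UC(n+1)$, and then to match the nef and movable cones with $\sfD$ and $\sfT$. Since $\dim X=n\ge 3$ and $X$ is an ample divisor in $(\bbP^1)^{n+1}$, the Lefschetz hyperplane theorem gives $H^2(X;\Z)\cong H^2((\bbP^1)^{n+1};\Z)$; as $X$ is Calabi-Yau one has $H^{2,0}(X)=0$, so every class is algebraic and $\NS(X)\cong\Z^{n+1}$, freely generated by the pullbacks $e_i=\pi_i^*[\mathrm{pt}]$ of the point class under the $i$-th projection $\pi_i\colon X\to\bbP^1$. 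Using $X\in|2\sum_i h_i|$ together with $h_i^2=0$ and $h_1\cdots h_{n+1}=1$ on the ambient space, I would record the top intersection numbers $e_{k_1}\cdots e_{k_n}=2$ when $k_1,\dots,k_n$ are pairwise distinct and $0$ otherwise. No genericity is needed so far.

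Next I would compute the action of each $\iota_i^*$. As $\iota_i$ is the deck involution of the degree two projection $p_i$ forgetting the $i$-th coordinate, the identity $p_i\circ\iota_i=p_i$ forces $\iota_i^*e_j=e_j$ for $j\ne i$. Writing $\iota_i^*e_i=a\,e_i+\sum_{j\ne i}b_j e_j$ and imposing that $\iota_i^*$ preserves the intersection numbers above together with $(\iota_i^*)^2=\mathrm{id}$, a short computation forces $a=-1$ and every $b_j=2$, so that
\[
\iota_i^*e_i=-e_i+2\sum_{j\ne i}e_j,\qquad \iota_i^*e_j=e_j\ (j\ne i).
\]
Matching $e_i$ with the dual basis vector $\alpha_i^*$, these are exactly the reflections of the contragredient of the geometric representation of $\UC(n+1)$, in which all off-diagonal Coxeter exponents are infinite. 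This produces the desired linear conjugacy between the dual geometric representation and the action of $\langle\iota_1,\dots,\iota_{n+1}\rangle$ on $\NS(X;\R)$, under which the fixed hyperplane of $\iota_i^*$ is $\{\xi_i=0\}$ and the closed fundamental chamber becomes the simplicial cone $\sfD=\{\sum_i\xi_i e_i:\xi_i\ge 0\}$. Since the geometric representation of a Coxeter group is faithful, the injectivity of $\Psi$ follows at once.

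It remains to identify the cones. For $\sfD=\overline{{\rm Amp}}(X)$, I would first note that each $e_i=\pi_i^*[\mathrm{pt}]$ is nef, whence $\sfD\subseteq\overline{{\rm Amp}}(X)$. For the reverse inclusion I would exploit that, for $n\ge 3$, the projection $p_i$ is \emph{not} finite: the locus in the base where the three coefficients of the quadratic defining $X$ in the $i$-th variable vanish simultaneously is nonempty, as it has codimension three, and over each of its points $X$ contains a whole line $C_i\cong\bbP^1$ contracted by $p_i$. Such a curve satisfies $e_i\cdot C_i=1$ and $e_j\cdot C_i=0$ for $j\ne i$, so for every nef class $D=\sum_i d_i e_i$ one gets $d_i=D\cdot C_i\ge 0$; hence $\overline{{\rm Amp}}(X)\subseteq\sfD$, and equality holds. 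Here genericity guarantees that these are the only walls, i.e. that $\NS(X)$ carries no further effective curve class cutting down the nef cone. Because every birational self-map of a Calabi-Yau manifold is an isomorphism in codimension one, each $\Psi(w)^*$ sends nef classes to movable classes, so $\sfT=\bigcup_{w}w\cdot\sfD\subseteq\overline{\sM}(X)$, the equivariance being built into the conjugacy; the Kawamata-Morrison statement then reduces to the classical fact that $\sfD$ is a fundamental domain for the $\UC(n+1)$-action on $\sfT$.

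I expect the main obstacle to be the reverse inclusion $\overline{\sM}(X)\subseteq\sfT$, i.e. that no movable class lies outside the Tits cone. Unlike the nef-cone computation, this is not a finite linear check: the boundary of $\sfT$ is fractal, being governed by the limit set of the associated Kleinian group, so one must rule out movable classes accumulating beyond every wall. I would handle this through the chamber structure of the movable cone of a Calabi-Yau manifold, showing that every movable class becomes nef on some marked minimal model, that all such models are isomorphic to $X$ through elements of $\langle\iota_1,\dots,\iota_{n+1}\rangle$, and hence that $\overline{\sM}(X)$ is tiled by the translates $w\cdot\sfD$, which is precisely $\sfT$. The same circle of ideas closes parts (1) and (2): any automorphism preserves the simplicial nef cone $\sfD$, hence permutes its rays $e_i$; for generic $X$ no nontrivial permutation is realized, so the automorphism fixes every $e_i$, is therefore compatible with each morphism $\pi_i$, and thus extends to an automorphism of $(\bbP^1)^{n+1}$ preserving $X$ --- which for generic $X$ must be the identity; and any birational map, after composition with the word in the $\iota_i$ carrying its nef chamber back to $\sfD$, pulls ample classes to ample classes, hence is biregular and so trivial, giving the surjectivity of $\Psi$.
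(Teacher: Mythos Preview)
Your outline is broadly correct and close to the paper's, but the crucial step --- the reverse inclusion ${\mathcal M}^e(X)\subseteq\Phi(\sfT_{n+1})$ --- is handled quite differently, and your proposed route is the harder one.

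The paper does \emph{not} appeal to a chamber decomposition of the movable cone by nef cones of marked minimal models. Instead it gives an elementary descent on effective divisors: writing an effective integral class as $D=\sum a_j h_j$ and setting $s(D)=\sum a_j$, one checks from the intersection numbers $(h_i\cdot h_j\cdot h_{k_1}\cdots h_{k_{n-2}})_X\ge 0$ that at most one $a_i$ can be negative and that $s(D)\ge 0$. If some $a_i<0$ (i.e.\ $D\notin\overline{{\rm Amp}}(X)$), then $\iota_i^*D$ is again effective and $s(\iota_i^*D)=s(D)+(2n-1)a_i<s(D)$. Since $s$ is a non-negative integer, finitely many such moves land $D$ in $\overline{{\rm Amp}}(X)$. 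Convexity of the Tits cone then swallows any real effective combination, giving ${\mathcal M}^e(X)\subseteq\Phi(\sfT_{n+1})$ directly. This sidesteps the issues your approach would face: you would need to know that \emph{every} movable class (including irrational boundary classes) is nef on some minimal model, and you conflate $\overline{\sM}(X)$ with ${\mathcal M}^e(X)$ --- the conjecture concerns the latter, and $\sfT$ is not closed.

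Two smaller differences are worth noting. For $\overline{{\rm Amp}}(X)=\sfD$, the paper invokes Koll\'ar's result that for a smooth anticanonical section of a Fano manifold the inclusion induces an isomorphism of ample cones; your argument via the contracted curves $C_i$ over $B_i$ is a valid and more hands-on alternative (and your remark about genericity ensuring ``no further walls'' is superfluous once you know $\NS(X)=\bigoplus\Z e_i$). For the matrix of $\iota_i^*$, the paper reads it off from the relation between roots and coefficients of the defining quadratic in $x_i$; your intersection-theoretic determination works too, but note that preservation of intersection numbers and $(\iota_i^*)^2={\rm id}$ alone leave the spurious solution $\iota_i^*={\rm id}$, which you must exclude (e.g.\ because $\iota_i$ is not regular over $B_i\neq\emptyset$, hence not an automorphism, hence cannot fix an ample class).

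Finally, your sketch for $\Bir(X)=\langle\iota_1,\dots,\iota_{n+1}\rangle$ and $\Aut(X)=\{{\rm Id}\}$ is essentially the paper's: the latter identifies each $\iota_j$ as the flop of the small contraction coming from the codimension-one face $F_j$ of the simplicial nef cone, and then invokes Kawamata's theorem that birational maps of minimal models factor into flops; triviality of $\Aut(X)$ is a dimension count inside $\Aut(P(n+1))$, exactly along the lines you indicate.
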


The conjugacy described in the second and third assertions enables us to describe more precisely the geometry of the movable cone. 
First, one can list explicitly all rational points on the boundary of the movable cone and prove that {\sl{every rational point on the boundary of the (closure of the) movable cone is movable}}, a result that is not predicted by Kawamata's conjecture. Second, one can {\sl{draw pictures of this cone}}, 
and show how it is related to Kleinian groups and the Appollonian Gasket. In particular, we shall see that
{\sl{the boundary of the movable cone has a fractal nature when $n\geq 3$}}; it is not smooth, nor polyhedral. 

 To our best knowledge, this theorem is the first non-trivial result in which the movable cone conjecture is checked for non-trivial examples of Calabi-Yau manifolds in dimension $\ge 4$. 
Our proof is a combination of recent important progress in the minimal model theory in higher dimension due to 
Birkar, Cascini, Hacon, and McKernan (\cite{BCHM}) and to Kawamata (\cite{Ka3}) and of classical results concerning Coxeter groups and Kleinian groups
(Theorems \ref{coxeter} and \ref{universalcoxeter} below).

%%%%%%%
\subsection{Automorphisms with positive entropy}
%%%%%%%
As already indicated by Theorem \ref{main1}, in higher dimensional algebraic geometry, birational transformations are more natural, and in general easier to find, than regular automorphisms. Nevertheless, it is also of fundamental interest to find non-trivial regular automorphisms of higher dimensional algebraic varieties. The following result describes families of Calabi-Yau manifolds in all even dimensions (\cite{OS}, Theorem 3.1): {\sl{
Let $Y$ be an Enriques surface and ${\rm Hilb}^n (Y)$ be the Hilbert scheme of $n$ points on $Y$, where $n \ge 2$. Let 
\[
\pi : \widetilde{{\rm Hilb}^n(Y)} \to 
{\rm Hilb}^n (Y)
\] 
be the universal cover of ${\rm Hilb}^n (Y)$. Then $\pi$ 
is of degree $2$ and $\widetilde{{\rm Hilb}^n(Y)}$ is a Calabi-Yau manifold of dimension $2n$. }}

In Section \ref{par:main2}, we prove that the automorphism group of $\widetilde{{\rm Hilb}^n(Y)}$ can be very large, and may contain element with positive topological entropy. This is, again, related to explicit Coxeter groups. For instance, we prove the following result. 

\begin{theorem}\label{main2}
Let $Y$ be a generic Enriques surface. Then, for each $n \ge 2$, the biregular automorphism group of $\widetilde{{\rm Hilb}^n(Y)}$ contains a subgroup isomorphic to the universal Coxeter group $\UC(3)$, and this copy of $\UC(3)$ contains an automorphism with positive entropy. 
\end{theorem}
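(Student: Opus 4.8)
The plan is to produce the copy of $\UC(3)$ downstairs, inside $\Aut(Y)$, and then transport it to $\widetilde{{\rm Hilb}^n(Y)}$. First I would record two functorial facts. Each $g\in\Aut(Y)$ induces $g^{[n]}\in\Aut({\rm Hilb}^n(Y))$, and the homomorphism $\Aut(Y)\to\Aut({\rm Hilb}^n(Y))$ is injective (an automorphism fixing every length-$n$ subscheme fixes every point of $Y$). Writing $\sigma$ for the fixed-point-free deck involution of $\pi\colon \widetilde{{\rm Hilb}^n(Y)}\to{\rm Hilb}^n(Y)$, every $g^{[n]}$ lifts through $\pi$ since $\pi$ is the universal cover, and the group of all lifts is a central extension of the image of $\Aut(Y)$ by $\langle\sigma\rangle\cong\Z/2$. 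So it suffices to (i) exhibit three involutions $g_1,g_2,g_3\in\Aut(Y)$ generating a copy of $\UC(3)$ that contains a positive-entropy element, (ii) split this extension over $\UC(3)$ by lifts that are again involutions, and (iii) control the entropies.

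For step (i), with $Y$ generic, I would invoke the theorem of Barth--Peters: modulo torsion $\NS(Y)$ is the even hyperbolic unimodular lattice $E_{10}=U\oplus E_8$ of signature $(1,9)$, there are no $(-2)$-curves (so the nef cone equals the positive cone), and $\Aut(Y)\to O(\NS(Y))$ is injective with image a finite-index subgroup $\Gamma$ of $O^+(E_{10})$, namely the $2$-congruence subgroup. Thus $\Aut(Y)$ is a non-elementary lattice acting on $\BH^9=\bbP(\{x:x\cdot x>0\})$. The orthogonal splitting $E_{10}=U\perp E_8$ gives $g_0=\mathrm{id}_U\oplus(-\mathrm{id}_{E_8})\in\Gamma$ (it is $\equiv\mathrm{id}\bmod 2$ and preserves the positive cone), fixing a geodesic $L_0=\bbP(U)\cap\BH^9$; since $\Gamma$ is exactly the image of $\Aut(Y)$, this $g_0$ equals $g^*$ for an involution $g\in\Aut(Y)$. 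Because $\Aut(Y)$ is a lattice, the conjugates $g_i=w_ig_0w_i^{-1}$ have fixed geodesics $w_iL_0$ that can be put in arbitrarily general position; choosing $w_1,w_2,w_3$ so that the three geodesics are pairwise far apart makes the products $g_ig_j$ loxodromic with axes in Schottky position, and the universal-Coxeter ping-pong criterion (Theorem~\ref{universalcoxeter}) then gives $\langle g_1,g_2,g_3\rangle\cong\UC(3)$, with $g_1g_2$ loxodromic, of spectral radius $>1$ on $\NS(Y)$, hence of positive entropy by Gromov--Yomdin.

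For step (ii), the key trick is to use fixed points together with the freeness of $\sigma$. Each $g_i$, being conjugate to $g_0$, has topological Lefschetz number $2+\mathrm{tr}(g_i^*|\NS)=2-6=-4\neq 0$ (here $H^1(Y)=H^3(Y)=0$), so it has a fixed point $y_i\in Y$; since $dg_i$ is a linear involution of $T_{y_i}Y$ it preserves an eigendirection, and the curvilinear length-$n$ subscheme supported at $y_i$ along that direction is fixed by $g_i^{[n]}$. Hence $g_i^{[n]}$ fixes a point $p_i\in{\rm Hilb}^n(Y)$. Taking $\tilde g_i$ to be the lift of $g_i^{[n]}$ fixing one of the two points of $\pi^{-1}(p_i)$, the square $\tilde g_i^{\,2}$ fixes that point and lies in $\{\mathrm{id},\sigma\}$; as $\sigma$ is fixed-point-free, $\tilde g_i^{\,2}=\mathrm{id}$, so each $\tilde g_i$ is an involution. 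Since the $\tilde g_i$ project onto the three generators of $\langle g_1^{[n]},g_2^{[n]},g_3^{[n]}\rangle\cong\UC(3)$, the universal property of $\UC(3)$ forces $\langle\tilde g_1,\tilde g_2,\tilde g_3\rangle\cong\UC(3)$ inside $\Aut(\widetilde{{\rm Hilb}^n(Y)})$.

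Finally, for step (iii): if $h(g_1g_2)=\log\lambda>0$ on $Y$, then by G\"ottsche's description of $H^*({\rm Hilb}^n(Y))$ as a Fock space the spectral radius of $(g_1g_2)^{[n]\,*}$ is $\lambda^n$, so $h((g_1g_2)^{[n]})=n\log\lambda>0$, and topological entropy is preserved under the finite cover $\pi$, whence the lift $\widetilde{g_1g_2}\in\langle\tilde g_1,\tilde g_2,\tilde g_3\rangle$ has entropy $n\log\lambda>0$. I expect the main obstacle to be step (i): because $E_{10}$ is unimodular its only reflections come from $(-2)$-roots, which are not $\equiv\mathrm{id}\bmod 2$, so the $2$-congruence subgroup contains no reflections and $\UC(3)$ cannot be realised as a hyperbolic reflection group; one is forced to work with the deep elliptic involutions $g_i$, and the real content is to check, within the congruence subgroup, that their fixed loci can be separated enough for Theorem~\ref{universalcoxeter} to apply. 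The lifting step is delicate in principle, since a central $\Z/2$-extension of $\UC(3)$ need not split, but the fixed-point argument above, which crucially uses that $\sigma$ acts freely, resolves it.
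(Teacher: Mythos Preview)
Your overall architecture is sound and the result follows, but Step~(i) is where your route diverges most sharply from the paper's, and where your argument is thinnest.

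\textbf{Step (i).} The paper does not use ping-pong at all. It writes down three explicit isotropic classes $e_1,e_2\in U$ and $e_3=e_1+e_2+v$ with $v\in E_8(-1)$ a $(-2)$-vector, obtains three copies $U_1,U_2,U_3\cong U$ inside $\NS_f(Y)$, and sets $\iota_j^*=\mathrm{id}_{U_j}\oplus(-\mathrm{id}_{U_j^\perp})$ (your $g_0$, but for three different $U$'s rather than three conjugates of one). On the rank-$3$ sublattice $L=\Z e_1\oplus\Z e_2\oplus\Z e_3$ one computes directly that $\iota_j^*|_L=M_{3,j}$, and then Theorem~\ref{universalcoxeter} (faithfulness of the geometric representation) gives $\langle\iota_1,\iota_2,\iota_3\rangle\cong\UC(3)$ and the explicit spectral radius $9+4\sqrt{5}$ for $\iota_1\iota_2\iota_3$. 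Your citation of Theorem~\ref{universalcoxeter} as a ``ping-pong criterion'' is a misreading: that theorem is exactly the statement that the matrices $M_{N,j}$ generate $\UC(N)$, and it applies verbatim to the paper's construction, not to a Schottky configuration of conjugates. Your own setup---half-turns about three far-apart geodesics in $\BH^9$---can be made to work (each $g_ig_j$ is indeed loxodromic along the common perpendicular of $L_i,L_j$, with translation length $2\,d(L_i,L_j)$, and if $\langle g_1g_2,\,g_2g_3\rangle$ is free of rank~$2$ then centerlessness of $\UC(3)$ forces $\langle g_1,g_2,g_3\rangle\cong\UC(3)$), but this needs to be argued, and is considerably less transparent than the three-line lattice computation the paper gives.

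\textbf{Step (ii).} Your fixed-point lift is correct and rather nice: linearising the involution at a fixed point, any curvilinear length-$n$ scheme along an eigendirection is invariant, giving a fixed point of $g_i^{[n]}$; the lift fixing a preimage squares to a deck transformation with a fixed point, hence to the identity. The paper's argument is different and does not use fixed points: if a lift $\tilde s$ had $\tilde s^{\,2}=\sigma$ then $\langle\tilde s\rangle\cong\Z/4\Z$ would act freely, forcing $\chi(\mathcal O)$ of the quotient to equal $\chi(\mathcal O_{\widetilde{\mathrm{Hilb}^n(Y)}})/4=2/4$, which is absurd. Your method is more geometric; the paper's is uniform in the choice of involution and avoids any Lefschetz computation.

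\textbf{Step (iii).} Your G\"ottsche/finite-cover argument is fine and more quantitative than what the paper records; the paper simply observes that the lifted $\UC(3)$ surjects onto the one in $\Aut(Y)$ and invokes the explicit eigenvalue (or Lemma~\ref{lem:free}).
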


The Calabi-Yau manifolds of Theorems \ref{main1} and \ref{main2} are fairly concrete. We hope that these examples will also provide non-trivial handy examples for complex dynamics (\cite{DS}, \cite{Zh}) and arithmetic dynamics (\cite{Sil}, \cite{Kg}) in higher dimension.

%%%%%%%%%%%%%%%%%%%%%%%%%%%%%%%%%%%%%%%%%%%%%%%%%%%%

\section{Universal Coxeter groups and their geometric representations}\label{par:Part2}

%%%%%%%%%%%%%%%%%%%%%%%%%%%%%%%%%%%%%%%%%%%%%%%%%%%%

In this section we collect a few preliminary facts concerning the universal Coxeter group on $N$ generators, 
and describe the geometry of its Tits cone. 

%%%
\subsection{Coxeter groups (see \cite{Hum})}
%%%
\subsubsection{Definitions}
%%%
Let $W$ be a group with a finite set of generators $S = \{s_j \}_{j=1}^{N}$. The pair $(W, S)$ is a {\bf{Coxeter system}} if there
are integers $m_{ij}\in {\mathbf Z}_+\cup \{\infty\}$ such that
\begin{itemize}
\item $W = \langle s_j \in S\, \vert\, (s_is_j)^{m_{ij}} = 1 \rangle$ is a presentation of $W$,
\item $m_{jj} =1$, i.e., $s_j^2 = 1$ for all $j$,
\item $2 \le m_{ij} = m_{ji} \le \infty$ when $i \not= j$ (here $m_{ij} = \infty$ means that $s_is_j$ is of infinite order).
\end{itemize} 
A group $W$ is called a {\bf Coxeter group} if $W$ contains a finite 
subset $S$ such that $(W, S)$ forms a Coxeter system. 

Let  $ \UC(N) $ be the free product of $N$ cyclic groups ${\Z}/2\Z$ of order $2$, as in Section \ref{intro:cox}. The group 
$\UC(N)$ is generated by $N$ involutions $t_j$, $1\leq j\leq N$, with no non-obvious relations between them. With this set of 
generators, $\UC(N)$ is a Coxeter group with $m_{ij}=\infty$ for all $i\neq j$.

If $(W, S)$ is a Coxeter system with $\vert S \vert = N$, there is a unique surjective homomorphism 
$\UC(N) \to W$ that maps $ t_j $ onto  $s_j$; its  kernel is the minimal normal subgroup containing $\{ (t_it_j)^{m_{ij}} \}$. In this 
sense, 
the group $\UC(N)$ is  universal among all Coxeter groups with $N$ generators; thus, we call $\UC(N)$ (resp. $(\UC(N), \{t_j\}_{j=1}^{N})$) the {\textbf{universal Coxeter group}}
(resp. the universal Coxeter system) {\textbf{of rank}} $N$. 

%%%
\subsubsection{Geometric representation}\label{par:geomrep}
%%%
Let $(W, \{s_j\}_{j=1}^{N})$ be a Coxeter system with 
$(s_is_j)^{m_{ij}} = 1$, as in the previous paragraph.  An $N$-dimensional real vector space $V = \oplus_{j=1}^{N} \R\alpha_j$ and a bilinear form $b(*,**)$ on $V$ are associated to these data;  the quadratic form $b$ is defined by  its values on the basis $(\alpha_j)_{j=1}^{N}$:
\[
b(\alpha_i, \alpha_j) = -\cos \frac{\pi}{m_{ij}}\,\, .
\]
Then, as explained in \cite{Hum} (see the Proposition page 110), there is a well-defined linear representation $\rho : W \to \GL(V)$, which maps each generator $s_j$ to the symmetry
\[
\rho(s_j)\colon \lambda\in V \mapsto \lambda - 2b(\alpha_j, \lambda)\alpha_j\,\, .
\]
The representation $\rho$ is the {\bf{geometric representation}} of the Coxeter system $(W,S)$. The following theorem (see \cite{Hum}, Page 113, Corollary) is one of the fundamental results on Coxeter groups:

\begin{theorem}\label{coxeter} The geometric representation $\rho$ of a Coxeter system $(W, S)$ is faithful. In particular, all Coxeter groups are linear.  
\end{theorem}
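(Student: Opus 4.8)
The plan is to prove faithfulness of the geometric representation by a ping-pong / fundamental-domain argument using the standard length function on the Coxeter group. First I would recall that every element $w \in W$ has a well-defined length $\ell(w)$, the minimal number of generators needed to write $w$ as a word $s_{i_1}\cdots s_{i_\ell}$, and that it suffices to show $\rho(w) = \mathrm{Id}$ forces $w = 1$. The key geometric object is the dual representation on $V^*$: one singles out the set $C = \{ f \in V^* : f(\alpha_j) > 0 \text{ for all } j \}$, an open cone whose walls are the hyperplanes $\{ f(\alpha_j) = 0\}$, and studies how $\rho^*(w)$ moves $C$ around.

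The heart of the argument is the following combinatorial claim, which I would prove by induction on $\ell(w)$:
\begin{claim}
For every $w \in W$ and every generator $s_j$, either $\ell(w s_j) > \ell(w)$, in which case $\rho^*(w)$ sends $C$ into the half-space $\{ f(\rho(w)\alpha_j) > 0\}$, or $\ell(w s_j) < \ell(w)$, in which case it sends $C$ into the complementary half-space.
\end{claim}
This is exactly the content of the Proposition on page 110 together with the exchange condition machinery in \cite{Hum}; the induction step amounts to analyzing the rank-$2$ dihedral parabolic subgroup generated by a pair $s_i, s_j$, where the positive roots are the vectors $\alpha_i, s_i\alpha_j, s_is_j\alpha_i, \ldots$ and one checks directly how each reflection permutes positive and negative roots. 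Once this sign-tracking statement is in hand, faithfulness follows: if $w \neq 1$, choose a reduced expression ending in $s_j$, so that $\ell(w s_j) < \ell(w)$; then the claim shows $\rho^*(w)$ maps $C$ into the open half-space $\{ f(\rho(w)\alpha_j) < 0\}$, which is disjoint from $C$ itself (since $f(\alpha_j) > 0$ on $C$), hence $\rho^*(w) \neq \mathrm{Id}$ and a fortiori $\rho(w) \neq \mathrm{Id}$.

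The main obstacle, and the step where all the real work lives, is establishing the sign-tracking claim rigorously, in particular handling the case $m_{ij} = \infty$ uniformly with the finite cases. The clean way is to set up the root system $\Phi = \{ \rho(w)\alpha_j \}$, prove that each root is either a nonnegative or a nonpositive combination of the $\alpha_j$ (the positive/negative root dichotomy), and show that a single reflection $\rho(s_j)$ changes the sign of exactly one positive root, namely $\alpha_j$ itself. Everything then reduces to the two-generator subcase, where $\langle s_i, s_j\rangle$ is either a finite dihedral group (when $m_{ij} < \infty$) or the infinite dihedral group $\UC(2)$ (when $m_{ij} = \infty$), and the bilinear form on $\R\alpha_i \oplus \R\alpha_j$ can be diagonalized explicitly; this concrete computation is what makes the whole inductive scheme go through. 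Since the statement is quoted directly from \cite{Hum}, I would in practice cite that reference for the combinatorial core and only sketch the ping-pong conclusion, but the self-contained route above is the proof I would reconstruct.
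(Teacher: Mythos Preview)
The paper does not give its own proof of this theorem; it simply cites \cite{Hum}, Page 113, Corollary. Your sketch is precisely the standard Humphreys argument being cited (the fundamental-domain/sign-tracking induction on length via the dual representation), so it is correct and in complete agreement with what the paper invokes.
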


%%%
\subsubsection{Tits cone} \label{par:titsconegene}
%%%
The dual space $V^*$ contains two natural convex cones, which we now describe. Denote the dual representation of
$W$ on $V^*$ by $\rho^*$.
The first cone $\sfD\subset V^*$ is the intersection of the half-spaces 
\[
\sfD^+_i=\left\{ f\in V^*\vert \, \, f(\alpha_i)\geq 0 \right\}.
\]
It is a closed convex cone over a simplex of dimension $N-1$; the facets of $\sfD$ are the intersections $\sfD\cap \sfD^+_i$. 

\begin{remark}
Since $\rho(s_i)$ maps
$\alpha_i$ to its opposite, the action of $\rho^*(s_i)$ on $V^*$ exchanges $\sfD^+_i$ and $-\sfD^+_i$. 
This is similar to the behavior of the relatively ample classes in the flopping diagram.
\end{remark}

The second cone, called the {\bf {Tits cone}}, is the union $\sfT$ of all images $\rho^*(w)(\sfD)$, where $w$ describes $W$. 

\begin{theorem}[see \cite{Hum}, \S 5.13, Theorem on page 126]\label{Tits-Cone}
The Tits cone $\sfT\subset V$ of a Coxeter group $W$ is a convex cone. It is invariant under the action of $W$ on $V$ and $\sfD$ is a fundamental domain for this action. 
 \end{theorem}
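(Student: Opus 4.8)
The plan is to prove the three assertions in the order of increasing difficulty: that $\sfT$ is a cone and is $W$-invariant (both essentially formal), then the fundamental-domain property, and finally convexity, which is the real content. Throughout I work in $V^{*}$, where $\sfD$ and $\sfT$ actually live, with the dual action $\rho^{*}(w)$ determined by $(\rho^{*}(w)f)(v)=f(\rho(w^{-1})v)$. The cone and invariance properties are immediate: each $\rho^{*}(w)(\sfD)$ is stable under multiplication by positive scalars, being the image of the cone $\sfD$ under a linear map, so the union $\sfT$ is a cone; and $\rho^{*}(w')(\sfT)=\bigcup_{w}\rho^{*}(w'w)(\sfD)=\sfT$ since $w\mapsto w'w$ is a bijection of $W$. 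Both remaining points will be proved by induction on the word length $\ell(w)$ of elements of $W$, and I will lean on two standard combinatorial facts from the general theory of Coxeter groups (see \cite{Hum}, \S 5.4): every root $\rho(w)(\alpha_{j})$ is either a nonnegative or a nonpositive combination of the simple roots $\alpha_{1},\dots,\alpha_{N}$, giving the partition $\Phi=\Phi^{+}\sqcup\Phi^{-}$; and for a generator $s_{j}$ one has $\ell(ws_{j})>\ell(w)$ exactly when $\rho(w)(\alpha_{j})\in\Phi^{+}$ (equivalently, after inverting, $\ell(s_{j}w)>\ell(w)$ exactly when $\rho(w^{-1})(\alpha_{j})\in\Phi^{+}$). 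A first consequence is that $\sfD\subset\{f:f(\alpha)\ge 0\}$ for every $\alpha\in\Phi^{+}$, since such an $\alpha$ is a nonnegative combination of the $\alpha_{j}$ while $f(\alpha_{j})\ge 0$ on $\sfD$.

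For the fundamental-domain property I would show: if $f\in\sfD$ and $g:=\rho^{*}(w)(f)\in\sfD$, then $g=f$. Induct on $\ell(w)$, the case $w=1$ being trivial. If $w\ne 1$, choose a generator $s=s_{k}$ with $\ell(ws)<\ell(w)$, so that $\rho(w)(\alpha_{k})\in\Phi^{-}$, say $\rho(w)(\alpha_{k})=-\sum_{t}c_{t}\alpha_{t}$ with $c_{t}\ge 0$. Using $g(\rho(w)\alpha_{k})=f(\alpha_{k})$ together with $g\in\sfD$, one gets $f(\alpha_{k})=-\sum_{t}c_{t}\,g(\alpha_{t})\le 0$, while $f(\alpha_{k})\ge 0$ because $f\in\sfD$; hence $f(\alpha_{k})=0$, and the explicit formula for $\rho(s_{k})$ then gives $\rho^{*}(s_{k})(f)=f$. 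Consequently $\rho^{*}(ws)(f)=\rho^{*}(w)(f)=g\in\sfD$ with $\ell(ws)<\ell(w)$, and the inductive hypothesis yields $g=f$. The same argument shows that the stabiliser of $f$ is generated by those $s_{j}$ with $f(\alpha_{j})=0$; in particular, if $f$ is interior to $\sfD$ then no nontrivial $w$ fixes it, so $\rho^{*}(w)(\sfD^{\circ})\cap\sfD^{\circ}=\emptyset$ for $w\ne 1$. Together with $W\cdot\sfD=\sfT$, this is precisely the statement that $\sfD$ is a fundamental domain.

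Convexity is the heart of the matter. By invariance it suffices to prove that for all $\lambda,\mu\in\sfD$ and all $w\in W$ the segment $[\lambda,\rho^{*}(w)(\mu)]$ lies in $\sfT$, since any two points of $\sfT$ can be moved by a single element of $W$ into this position. I induct on $\ell(w)$. If the segment stays inside $\sfD$ (in particular if $w=1$) we are done, as $\sfD$ is convex. Otherwise, let $\nu$ be the point at which the segment \emph{first} leaves $\sfD$; since $\sfD=\bigcap_{i}\sfD^{+}_{i}$ is closed and convex, $[\lambda,\nu]\subset\sfD$ and $\nu$ lies on some facet, i.e. $\nu(\alpha_{s})=0$ for a generator $s$ through whose wall the segment exits. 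Because $f\mapsto f(\alpha_{s})$ is affine and decreasing along the segment past $\nu$, the endpoint satisfies $(\rho^{*}(w)(\mu))(\alpha_{s})\le 0$, so the chamber $\rho^{*}(w)(\sfD)$ lies in $\{f:f(\alpha_{s})\le 0\}$ and hence $\ell(sw)<\ell(w)$ by the length criterion. Now fold the outer part by $s$: as $\rho^{*}(s)$ fixes $\nu$, it carries $[\nu,\rho^{*}(w)(\mu)]$ onto $[\nu,\rho^{*}(sw)(\mu)]$. Applying the inductive hypothesis to $sw$, with $\nu\in\sfD$ in the role of $\lambda$ and $\mu$ unchanged, gives $[\nu,\rho^{*}(sw)(\mu)]\subset\sfT$, whence $[\nu,\rho^{*}(w)(\mu)]=\rho^{*}(s)\bigl([\nu,\rho^{*}(sw)(\mu)]\bigr)\subset\rho^{*}(s)(\sfT)=\sfT$. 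Combined with $[\lambda,\nu]\subset\sfD\subset\sfT$, this proves $[\lambda,\rho^{*}(w)(\mu)]\subset\sfT$.

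I expect the main obstacle to be exactly the choice made in the convexity step: a naive decomposition of the segment at an arbitrary crossing with a reflection hyperplane fails, because a generic crossing point need not lie in $\sfD$ and so cannot feed the induction. Taking $\nu$ to be the first exit point forces $\nu\in\sfD$ and makes the folding by $\rho^{*}(s)$ \emph{strictly} decrease the length of the group element, which is what drives the argument. The only other genuine work is hidden in the two cited combinatorial facts about the length function and the sign of the roots $\rho(w)(\alpha_{j})$; these rest on the exchange/deletion condition and form the technical backbone on which both inductions depend. Boundary situations — the segment meeting several walls at once, or an endpoint lying on a wall — are dealt with by selecting any one of the relevant generators $s$ or by a short limiting argument, and do not affect the structure of the proof.
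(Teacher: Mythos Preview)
The paper does not prove this theorem; it is quoted as a known result from Humphreys, \S 5.13. Your argument is correct and is essentially the classical proof given there: formal verification of invariance and of the cone property, then the two inductions on $\ell(w)$, first for the fundamental-domain statement (using that $\ell(ws)<\ell(w)$ forces $\rho(w)\alpha_{s}\in\Phi^{-}$, hence $f(\alpha_{s})=0$ and $\rho^{*}(s)f=f$), and second for convexity via the ``first exit point'' $\nu$ and folding by $\rho^{*}(s)$. One small comment on the convexity step: you need the strict inequality $(\rho^{*}(w)\mu)(\alpha_{s})<0$, not merely $\le 0$, to conclude that $\rho(w^{-1})\alpha_{s}\in\Phi^{-}$; this is fine because the affine function $t\mapsto((1-t)\lambda+t\,\rho^{*}(w)\mu)(\alpha_{s})$ vanishes at $\nu$ and is negative immediately afterwards, hence is strictly decreasing and therefore strictly negative at the endpoint. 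With that adjustment your proof is complete and matches the cited source.
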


Given $J\subset S$, consider the subgroup $W_J$ of $W$ generated by the elements of $J$. Define $\sfD(J)$ by 
\[
\sfD(J) = \left( \cap_{j\in J} \{f\in V^* \vert \, f(\alpha_j)=0 \}  \right)\cap \left( \cap_{i\notin J} \{f\in V^* \vert \, f(\alpha_i) >0\}  \right);
\] 
hence, each $\sfD(J)$ is the interior of a face of dimension $N-\vert J\vert$. Then $W_J$ is a Coxeter group; it coincides with
the stabilizer of every point of $\sfD(J)$ in $W$. 

%%%%%%%%%%%%%%%%%%%%%%%%%%%%%%%%
\subsection{The universal Coxeter group}\label{par:UCN}
%%%%%%%%%%%%%%%%%%%%%%%%%%%%%%%%

We now study the  Coxeter group $\UC(N)$. The vector space of its geometric representation, the Tits cone and its
fundamental domain are denoted with index $N$: $V_N$, $\sfT_N$, $ \sfD_N$, etc.

%%%
\subsubsection{The linear representation}\label{matrix}
%%%
With the basis $(\alpha_j)_{j=1}^{N}$, identify $V_N$ to $\R^N$, and $\GL(V)$ to $\GL_N(\R)$.
Let $M_{N,j}$ ($1 \le j \le N$) be the $N \times N$ matrices with integer coefficients, defined by: 
\begin{equation}\label{eq:matrix}
M_{N,j} = 
\left(\begin{array}{rrrrrrrrr}
1 & 0 & \ldots & 0 & 2  & 0 & \ldots & 0\\
0 & 1 & \ldots & 0 & 2  & 0 & \ldots & 0\\
\vdots & \vdots & \ddots & \vdots & \vdots & \vdots & \ldots & \vdots\\ 
0 & 0 & \ldots & 1 & 2 & 0 & \ldots & 0\\
0 & 0 & \ldots & 0 & -1 & 0 & \ldots & 0\\
0 & 0 & \ldots & 0 & 2 &  1 & \ldots & 0\\
\vdots & \vdots & \ddots & \vdots & \vdots & \vdots & \ddots & \vdots\\
0 & 0 & \ldots  & 0 &2 & 0 & \ldots & 1 
\end{array} \right)\,\, ,
\end{equation}
where $-1$ is the $(j,j)$-entry. For instance, 
\[
M_{3,1} = \left(\begin{array}{rrr}
-1 & 0 & 0\\
2 & 1 & 0\\
2 & 0 & 1
\end{array} \right)\,\, ,\,\, M_{3,2} = \left(\begin{array}{rrr}
1 & 2 & 0\\
0 & -1 & 0\\
0 & 2 & 1
\end{array} \right)\,\, ,\,\, M_{3,3} = \left(\begin{array}{rrr}
1 & 0 & 2\\
0 & 1 & 2\\
0 & 0 & -1
\end{array} \right)\,\, .
\] 

\begin{theorem}\label{universalcoxeter} The geometric representation $\rho$ of the universal Coxeter system $(\UC(N), \{t_j\}_{j=1}^{N})$ 
of rank $N$ is given by $\rho(t_j) = M_{N, j}^{t}$, where $M_{N, j}^{t}$ is the transpose of $M_{N, j}$. In particular, in $\GL_N(\R)$, we obtain
\begin{eqnarray*}
\langle M_{N, j}\, , \,  1 \le j \le N \rangle &  = &\langle M_{N, 1} \rangle * \langle M_{N, 2} \rangle * \cdots * \langle M_{N, N} \rangle \\
& = & \rho(\UC(N))\\
&\simeq & \UC(N).
\end{eqnarray*}
\end{theorem}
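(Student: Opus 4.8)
The plan is to verify directly that the matrices $M_{N,j}^t$ realize the geometric representation $\rho$ of $(\UC(N),\{t_j\})$, and then to deduce the free-product decomposition from Theorem~\ref{coxeter} together with a standard ping-pong argument. First I would compute $b(\alpha_i,\alpha_j)$ for $\UC(N)$: since $m_{jj}=1$ and $m_{ij}=\infty$ for $i\neq j$, the Gram matrix has $b(\alpha_j,\alpha_j)=-\cos\pi=1$ on the diagonal and $b(\alpha_i,\alpha_j)=-\cos(\pi/\infty)=-1$ off the diagonal. Plugging this into the reflection formula $\rho(t_j)\colon\lambda\mapsto\lambda-2b(\alpha_j,\lambda)\alpha_j$ and reading off the matrix of $\rho(t_j)$ in the basis $(\alpha_k)$, I would check that the $(k,k)$-entry is $1$ for $k\neq j$, the $(j,j)$-entry is $1-2b(\alpha_j,\alpha_j)=-1$, and the entry in the $j$-th column, $k$-th row (for $k\neq j$) is $-2b(\alpha_j,\alpha_k)=2$, all other entries vanishing. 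This matrix is exactly $M_{N,j}$; the transpose $M_{N,j}^t$ then represents $\rho(t_j)$ under the identification $\GL(V_N)\cong\GL_N(\R)$ via the dual/transpose convention, which matches the statement $\rho(t_j)=M_{N,j}^t$. This is the routine linear-algebra step and poses no real difficulty.

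Given this identification, Theorem~\ref{coxeter} applies verbatim: the geometric representation $\rho$ of the Coxeter system $(\UC(N),\{t_j\})$ is faithful, so $\rho(\UC(N))\simeq\UC(N)$, which is the last isomorphism in the display. What remains is the middle equality, namely that the subgroup $\langle M_{N,1},\ldots,M_{N,N}\rangle$ of $\GL_N(\R)$ is the \emph{internal free product} of the $N$ cyclic groups $\langle M_{N,j}\rangle$, each of order $2$ (one checks $M_{N,j}^2=\mathrm{Id}$, which also follows abstractly from $t_j^2=1$ and faithfulness). But this is precisely the assertion that the surjection $\UC(N)=\langle t_j\rangle\to\langle M_{N,j}\rangle$ is an isomorphism, i.e.\ faithfulness again; the notation $\langle M_{N,1}\rangle*\cdots*\langle M_{N,N}\rangle$ is just the statement that no nontrivial reduced word in the generators acts as the identity. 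So the free-product line is a restatement of faithfulness and carries no independent content once Theorem~\ref{coxeter} is invoked.

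The only genuine subtlety is bookkeeping: one must confirm that the transpose intervenes correctly, i.e.\ that $\rho(t_j)$, written as a matrix acting on column vectors in the basis $(\alpha_k)$, equals $M_{N,j}$ rather than $M_{N,j}^t$, so that the stated formula $\rho(t_j)=M_{N,j}^t$ is consistent with whatever action convention (rows versus columns, or the passage to the dual representation $\rho^*$ on $V^*$) the authors adopt. The danger is a silent mismatch between the ``reflections send $\alpha_j$ to its opposite and fix $\alpha_j^\perp$'' description and the explicit entries of~\eqref{eq:matrix}. I would resolve this by checking the smallest case $N=3$ against the displayed $M_{3,1},M_{3,2},M_{3,3}$ and confirming, for instance, that $M_{3,1}^t$ sends the first basis vector $\alpha_1$ to $-\alpha_1+2\alpha_2+2\alpha_3$ while fixing $\alpha_2,\alpha_3$, which agrees with the reflection formula once the Gram matrix is used. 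The main obstacle, such as it is, is therefore entirely notational and is dispatched by this one explicit verification; the structural content is carried by Theorem~\ref{coxeter}.
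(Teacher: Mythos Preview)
Your approach is exactly the paper's: compute $\rho(t_j)(\alpha_i)$ from the reflection formula with $b(\alpha_j,\alpha_j)=1$ and $b(\alpha_i,\alpha_j)=-1$, read off the matrix, and then invoke Theorem~\ref{coxeter} for faithfulness (which, as you correctly observe, is the entire content of the free-product line). The paper's proof is three lines and does nothing more.

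There is, however, a slip precisely in the bookkeeping step you flagged as the only danger. The reflection $\rho(t_j)$ sends $\alpha_j\mapsto -\alpha_j$ and $\alpha_i\mapsto \alpha_i+2\alpha_j$ for $i\neq j$; it does \emph{not} fix the $\alpha_i$ with $i\neq j$. Hence, in the column-vector convention, the $j$-th \emph{row} (not column) of the matrix of $\rho(t_j)$ carries the $2$'s, and that matrix is $M_{N,j}^{t}$ directly, not $M_{N,j}$. Your explicit $N=3$ check is accordingly inverted: $M_{3,1}^{t}$ sends $\alpha_1$ to $-\alpha_1$ (its first column is $(-1,0,0)^{t}$) and sends $\alpha_2$ to $\alpha_2+2\alpha_1$, not the other way around. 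What you wrote---``$M_{3,1}^{t}$ sends $\alpha_1$ to $-\alpha_1+2\alpha_2+2\alpha_3$ while fixing $\alpha_2,\alpha_3$''---describes $M_{3,1}$, and moreover does not agree with the reflection formula for $\rho(t_1)$. Once this transpose is straightened out, your argument and the paper's coincide.
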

\begin{proof} 
By definition, $m_{jj} = 1$ and $m_{ij} = \infty$ ($i\not= j$) for the universal Coxeter system. Hence 
$$\rho(t_j)(\alpha_i) = \alpha_i + 2\alpha_j\,\, (i\not= j)\,\, ,\,\, 
\rho(t_j)(\alpha_j) = -\alpha_j\,\, ,$$
i.e., the matrix representation of $\rho(t_j)$ in  the basis $(\alpha_j)_{j=1}^{N}$ is $M_{N, j}^{t}$. 
\end{proof}

%%%
\subsubsection{The quadratic form}\label{par:quadform}
%%%
Let $b_N$ denote the {\sl{opposite}} of the quadratic form defined in Section \ref{par:geomrep}. 
Its matrix $B_N$, in the basis $(\alpha_i)_{i=1}^{i=N}$ is the integer matrix with coefficients $-1$ on the diagonal and $+1$ for all 
remaining entries.

When $N=1$, $b_N$ is negative definite. When $N=2$, $b_N$ is degenerate: $b(u,u)=(x-y)^2$ for all $u=x\alpha_1+y\alpha_2$ in $V$. 
For $N\geq 3$, the following properties are easily verified (with $\parallel v \parallel_{euc}$ the usual euclidean norm): 
\begin{enumerate}
\item The vector $u_N=\sum_{i=1}^{N} \alpha_i$ is in the positive cone; more precisely 
\[
b_N(u_N,u_N)=N(N-2).
\]
\item $b_N(v,v)= -2\parallel v\parallel_{euc}^2=-2\sum x_i^2$ for all vectors $v=\sum_{i=1}^Nx_i \alpha_i$ of the orthogonal
complement $u_N^\perp=\{v=\sum_i x_i \alpha_i\vert \, \, \sum_i x_i=0\}$.
\item The signature of $b_N$ is $(1,N-1)$.
\end{enumerate}
In what follows, $N\geq 3$ and $u_N$ denotes the vector $\sum_i\alpha_i$. A vector $w=au_N+v$, with $v$ in $u_N^\perp$, 
is isotropic if and only if 
\[
N(N-2) a^2= 2 \parallel v\parallel_{euc}^2.
\]
Thus, if $(\beta_1, \ldots, \beta_{N-1})$ is an orthonormal basis of $u_N^\perp$ and $\beta_N=u_N$, then the isotropic cone
is the cone over a round sphere; its equation is
\[
(N(N - 2)/2) y_n^2= \sum_{i=1}^{N-1} y_i^2
\] 
for $v=\sum_j y_i \beta_j$.
The vectors $\alpha_i+\alpha_j$, with $i\neq j$, are isotropic vectors. 

\begin{example}\label{eg:N=3}
For $N=3$, consider the basis $((0,1,1),(1,0,1),(1,1,0))$. The matrix of $b_3$ in this basis has coefficients $0$ along the diagonal, 
and coefficients $1$ on the six remaining entries. 
\end{example}

%%%
\subsubsection{The Tits cone $\sfT_N$} 
%%%
To understand the Tits cone of $\UC(N)$, one can identify $V_N$ to its dual $V_N^*$ by the duality
given by the non-degenerate quadratic form $b_N$; with such an identification, $\sfD_N$ is the set of vectors $w$ such that 
$b_N(w,\alpha_i)\geq 0$ for all $i$, and  $\sfT_N$ becomes a convex cone in $V_N$. The half-space $\sfD^+_{N,i}$ is the set of vectors $w$
such that $b_N(w, \alpha_i)\geq 0$ and its boundary is the hyperplane $\alpha_i^\perp$. The convex cone $\sfD_N$ is the convex hull
of its $N$ extremal rays ${\mathbf{R}}_+ c_j$, $1\leq j\leq N$, where 
\begin{eqnarray*}
c_N & = & -(N-2)\alpha_N + \sum_{i=1}^N \alpha_i \\
& = & (1,1, \ldots, 1, -(N-3))
\end{eqnarray*}
and the $N-1$ remaining $c_j$ are obtained from $c_N$ by permutation of the coordinates. For all $N\geq 3$, and all indices $j$, one obtains
\[
b_N(c_j,c_j)=-2 (N-2)(N-3).
\]

\begin{example}
For $N=3$, $(c_1,c_2,c_3)$ is the isotropic basis already obtained in Example \ref{eg:N=3}. For $N=4$, one gets 
the basis $(-1,1,1,1)$, $(1,-1,1,1)$, $(1, 1, -1, 1)$, $(1,1,1,-1)$, with $b_4(c_j, c_j)=-4$.\end{example}

%%%%%%

%\begin{figure}[t] 
%\centering\epsfig{figure=funda-domain.pdf} 
%\caption{{\small{Fundamental domain $\sfD_4$ (projective view): $\BP(\sfD_4)$ i%s the tetrahedron; the sphere $\Sph_4$ is shown, together with its
%four tangency points with the edges of $\BP(\sfD_4)$. }}}
%\end{figure}

%%%%%%

%%%
\subsubsection{A projective view of $\, \sfD_N$}
%%%
To get some insight in the geometry of the Tits cone, one can draw its projection in the real projective space ${\BP}(V_N)$, 
at least for small values of $N$. We denote the projection of a non-zero vector $v$ in $\BP(V_N)$ by $[v]$.

To describe ${\BP}(\sfT_N)$ and $\BP(\sfD_N)$, denote by $\Sph_N$ the projection of the isotropic cone (it is a round sphere, as described in 
\S~\ref{par:quadform}). Consider the projective line $L_{ij}$ through the two points $[c_i]$ and $[c_j]$ in $\BP(V_N)$; this line is the projection of
the plane ${\text{Vect}}(c_i,c_j)$. For example, with $(i,j)=(N-1,N)$, this plane is parametrized by $sc_{N-1}+tc_N$ with $s$ and $ t$ in ${\mathbf{ R}}$, and its intersection with the isotropic cone corresponds to parameters $(s,t)$ such that 
\[
(N-3) (s^2+t^2) =2 st.  
\]
Hence, we get the following behaviour: 
\begin{itemize}
\item If $N=3$, $L_{ij}$ intersects the sphere $\Sph_N$ transversally at $[c_i]$ and $[c_j]$.
\item If $N=4$, $L_{ij}$ is tangent to $\Sph_N$ at $[c_i+c_j]$.
\item If $N\geq 5$, $L_{ij}$ does not intersect $\Sph_N$.
\end{itemize}
Similarly, one shows that the faces $\alpha_i^\perp\cap \alpha_j^\perp$ of $\sfD_N$ of codimension $2$ intersect 
the isotropic cone on the line ${\mathbf{R}}(\alpha_i+\alpha_j)$: Projectively, they correspond to faces of $\BP({\sfD_N})$ that
intersect the sphere $\Sph_N$ on a unique point. The faces $\alpha_i^\perp\cap \alpha_j^\perp\cap\alpha_k^\perp$ of codimension $\geq 3$
do not intersect the isotropic cone (i.e. do not intersect $\Sph_N$ in $\BP(V_N)$).

In dimension $N=3$,  $\sfD$ is a triangular cone with isotropic extremal rays and $\BP(\sfD_3)$ is a triangle. 

%(see Figure~2).
%In dimension $N=4$, a projective view of $\sfD_4$ is shown on Figure~1. 

%\begin{figure}[t]
%\centering\epsfig{figure=triangle-tiling.pdf}
%\caption{{\small{The Tits cone in dimension $N=3$: projective view. One obtains% a tiling of the disk by  triangles; the vertices
%describe all rational points of the boundary $\Sph_3$.}}}
%\label{fig:triangle-tiling}
%\end{figure}

%%%
\subsubsection{A projective view of the Tits cone $\sfT_3$ (see \cite{Mag})}\label{par:T3}
%%%
The cone $\sfT_N$ is the union of all images $\rho(w)(\sfD_N)$, for $w$ in $\UC(N)$. 
When $N=3$, the sphere $\Sph_3$ is a circle, that bounds a disk. This disk can be identified to 
the unit disk in $\C$ with its hyperbolic metric, and the group $\UC(3)$ to a discrete subgroup 
of the isometry group $\bbPGL_2(\R)$. Up to conjugacy, $\UC(3)$ is the 
congruence subgroup 
\[
\{ M\in \bbPGL_2(\Z)\, \vert \; M={\text{Id}}_2 \; {\text{mod}}(2)\}.
\]
In particular, $\UC(3)$ is a non-uniform lattice in the Lie group $\bbPGL_2(\R)$ (i.e. in the orthogonal 
group of $b_3$).
The fundamental domain $\BP(\sfD_3)$ is a triangle with vertices on the circle $\Sph_3$. 
Its orbit $\BP(\sfT_3)$ under $\UC(3)$ is the union of
\begin{itemize}
\item the projection of the positive cone $\{ w \in V\vert \, \, b(w,u)>0 \, \, {\text{ and }} \, \, b(w,w)>0\}$ in $\BP(V)$,
\item  the set of rational points $[w]\in \Sph_3$ where
$w$ describes the set of isotropic vectors with integer coordinates. 
\end{itemize}
All rational points of $\Sph_3$ can be mapped to one of the vertices of $\BP(\sfD_3)$ by the action of $\UC(3)$.

%%%
\subsubsection{The limit set}\label{par:limitset}
%%%

The group $\UC(N)$ preserves the quadratic form $b_N$ and this form is non-degenerate, of signature 
$(1,N-1)$. Thus, after conjugacy by an element of $\GL_N(\R)$, $\UC(N)$ becomes a discrete
subgroup of ${\rm O}_{1,N-1}(\R)$. The {\bf{limit set}} of such a group is the minimal compact 
subset of the sphere $\Sph_N$ that is invariant under the action of $\UC(N)$; we shall denote the limit set 
of $\UC(N)$ by  $\Lambda_N$.
This set coincides with (see \cite{Rat}, chap. 12)
\begin{itemize}
\item the closure of the points $[v]$ for all vectors $v\in V_N$ which are eigenvectors of at least one element $f$ 
in $\UC(N)$ corresponding to an eigenvalue $>1$;
\item the intersection of $\Sph_N$ with the closure of the orbit $\UC(N)[w]$, for any given $[w]$ such that $b_N(w,w)\neq 0$.
\end{itemize} 
The convex hull ${\sf{Conv}}(\Lambda_N)$ of the limit set is invariant under $\UC(N)$, and is contained in the closed ball enclosed in $\Sph_N$. 
The dual of ${\sf{Conv}}(\Lambda_N)$ with respect to the quadratic form $b_N$ is also a closed invariant convex set. One can show that this 
convex set coincides with the closure of $\BP(\sfT_N)$, and corresponds to the maximal invariant and strict cone in $V_N$ (see \cite{B}, \S 3.1); we shall not use this 
fact (see Section \ref{par:proofmovable} for a comment).

\begin{remark}
For $N=4$, ${\rm O}(b_4)$ is isogeneous to $\bbPGL_2(\C)$ and $\UC(4)$ is conjugate to a Kleinian group (see \cite{Bea}). 
\end{remark}

\begin{remark}
Note that $\UC(3)$ is a lattice in the Lie group ${\rm O}(b_3)$ while, for $N\geq 4$, the fundamental domain  $\BP(\sfD_N)$ contains points of the sphere $\Sph_N$ in its
interior and the Haar measure of ${\rm O}(b_N)/\UC(N)$ is infinite.
\end{remark}

%%%
\subsubsection{A projective view of the Tits cone $\sfT_4$}
%%%
Let us fix a vertex, say $c_N$, of $\sfD_N$. Its stabilizer is  the subgroup $\UC(N)_{\{N\}}$ of $\UC(N)$ (cf. \S~\ref{par:titsconegene}). It acts on $c_N^\perp$ and the orbit of
$\sfD_N\cap c_N^\perp$ under the action of $\UC(N)_{\{N\}}$ tesselates the intersection $\sfT_N\cap c_N^\perp$; moreover, up to conjugacy 
by a linear map, $\sfT_N\cap c_N^\perp$,
together with its action of $\UC(N)_{\{ N \}}$, is equivalent to the Tits cone in dimension $N-1$, together with the action of $\UC(N-1)$. 
Hence, {\sl{if one looks at $\BP(\sfT_N)$ from a vertex of $\BP(\sfD_N)$, one obtains a cone over $\BP(\sfT_{N-1})$}}. 

Let us apply this fact to the case $N=4$. Then, $V_4$ has dimension $4$, $\BP(V_4)$ has dimension $3$, and the sphere $\Sph_4$ is 
a round sphere in ${\mathbf{R}}^3$ (once the hyperplane $[u_4^\perp]$ is at infinity). In particular, $\UC(4)$ acts by conformal 
transformations on this sphere; thus, $\UC(4)$ is an example of a Kleinian group, i.e. a discrete subgroup
of ${\rm O}_{1,3}(\R)$ (see \S \ref{par:limitset}).

%\begin{figure}[t]
%\centering\epsfig{figure=appolo.pdf}
%\caption{{\small{A projective view of the cone $\sfT_4$. The first 56 circles o%f the Apollonian Gasket are represented. (a ``chinese hat'' should be attached %to each circle)}}}
%\end{figure}

%, as in Figure \ref{fig:triangle-tiling}

Viewed from the vertex $[c_4]\in \BP(V_4)$, the convex set $\BP(\sfT_4)$ looks like 
a cone over $\BP(\sfT_3)$. More precisely, the intersection of $[c_4^\perp]$ with the sphere $\Sph_4$ is a circle, and the orbit of $\BP(\sfD_4)\cap [c_4^\perp]$ 
under the stabilizer $\UC(4)_{\{4\}}$ tesselates the interior of this circle (see \S \ref{par:titsconegene}).  All segments that connect
$[c_4]$ to a rational point of this circle are contained in $\BP(\sfT_4)$. Thus, $\BP(\sfT_4)$ contains a ``chinese hat shell'' (Calyptraea chinensis),
tangentially glued to the sphere $\Sph_4$, with vertex $[c_4]$. The  picture is similar in a neighborhood of $[c_1]$, $[c_2]$, and $[c_3]$. 

Since $\UC(4)$
acts by projective linear transformations on $\BP(V_4)$, an infinite number of smaller and smaller chinese hat shells are glued to $\Sph_4$, 
with vertices on the orbits of the $[c_i]$: the accumulation points of these orbits, and of the "chinese hats" attached to them, converge towards
the limit set of the Kleinian group $\UC(4)$.

 To get an idea of the convex set $\BP(\sfT_4)$, one needs to describe the sequence of circles
along which the shells are glued: This sequence is made of circles on the sphere $\Sph_4$, known as the Apollonian Gasket. 
The closure of the union of all these circles is a compact subset of $\Sph_4$ that is invariant under the action of $\UC(4)$; as such, it coincides
with the limit set of the Kleinian group $\UC(4)$, {\sl{i.e.}} with the unique minimal $\UC(4)$-invariant compact subset of $\Sph_4$. Its Hausdorff dimension has been computed by McMullen (see \cite{Mc3}):
\[
{\text{H.-dim(Apollonian Gasket)}}=1.305688...
\]

\begin{remark}
Up to conjugacy in the group of conformal transformations of the Riemann sphere, there is a unique configuration of tangent circles with the 
combinatorics of the Apollonian Gasket. Thus this circle packing is ``unique''.
\end{remark}

\begin{proposition}
The projective image of the Tits cone $\sfT_4$ is a convex set $\BP(\sfT_4)\subset \BP(V_4)$. Its closure
${\overline{\BP(\sfT_4)}}$ is a compact convex set. 
Let ${\sf{Ex}}({\overline{\BP(\sfT_4)}})$ be the set of extremal points of ${\overline{\BP(\sfT_4)}}$. This set
is $\UC(4)$-invariant, and the set of all its accumulation points coincides with the limit set $\Lambda(4)$ of $\UC(4)$ in $\Sph_4$. The
Hausdorff dimension of $\Lambda(4)$ is approximately equal to $1.305688$.
\end{proposition}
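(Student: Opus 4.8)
The plan is to derive the whole statement from three inputs already available: the convexity of the Tits cone (Theorem~\ref{Tits-Cone}), the fact that $\UC(4)$ is a discrete group of $b_4$-isometries whose limit set $\Lambda_4=\Lambda(4)$ is the Apollonian Gasket (\S\ref{par:limitset} and the ``chinese hat'' discussion above), and McMullen's dimension count \cite{Mc3}. The first task is to upgrade ``$\sfT_4$ is a convex cone'' to ``$\BP(\sfT_4)$ is a convex subset of an affine chart''; the only thing to check is that the closed cone $\overline{\sfT_4}$ is salient, i.e. $\overline{\sfT_4}\cap(-\overline{\sfT_4})=\{0\}$. Once this is known, $\overline{\sfT_4}$ lies in an open half-space (union $\{0\}$), so $\BP(\sfT_4)$ and its closure lie in a single affine chart of $\BP(V_4)$; there $\BP(\sfT_4)$ is convex and $\overline{\BP(\sfT_4)}$ is compact and convex, the compactness being automatic since $\BP(V_4)$ is compact.

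For salience I would argue as follows. Since $b_4$ has signature $(1,3)$ and each $\alpha_j$ is spacelike, every reflection $\rho(t_j)$ fixes a timelike vector of $\alpha_j^\perp$, hence is orthochronous; therefore $\UC(4)$ preserves the future cone $\sC^+$ (the component of $\{b_4>0\}$ containing $u_4$) and its opposite $-\sC^+$. For $w\in\sfD_4$ one has $b_4(w,u_4)=\sum_i b_4(w,\alpha_i)\ge0$, so $\sfD_4$ contains no past-timelike vector; applying the group and using $\rho(w)(-\sC^+)=-\sC^+$ gives $\sfT_4\cap(-\sC^+)=\emptyset$, whence $\overline{\sfT_4}\cap(-\sC^+)=\emptyset$ because $-\sC^+$ is open. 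In particular $-u_4\notin\overline{\sfT_4}$, so $\overline{\sfT_4}\neq V_4$. Now the lineality space $L=\overline{\sfT_4}\cap(-\overline{\sfT_4})$ is a linear subspace preserved by $\rho(\UC(4))$; since the Coxeter diagram of $\UC(4)$ is connected and $b_4$ is nondegenerate, the geometric representation is irreducible (\cite{Hum}), so $L\in\{0,V_4\}$, and $L\neq V_4$ forces $L=\{0\}$. This proves the first two assertions.

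Next I would identify the extremal points. From $\sfT_4=\bigcup_w\rho(w)\sfD_4$ and the fact that each chamber $\rho(w)\BP(\sfD_4)$ is the simplex on the vertices $[\rho(w)c_j]$, one gets $\overline{\BP(\sfT_4)}=\overline{{\sf{Conv}}(\{[\rho(w)c_j]\,:\,w\in\UC(4),\,1\le j\le4\})}$. By the chinese hat description, the portion of this body lying outside the closed ball $\overline{\BH^3}=\overline{\BP(\sC^+)}$ is exactly the union of the cones with apex $[\rho(w)c_j]$ over the tangency circles $C_{w,j}=\rho(w)\,([c_j^{\perp}]\cap\Sph_4)$; the extremal points of such a cone are its apex and its base circle (which lies on $\Sph_4$). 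Hence every extremal point of $\overline{\BP(\sfT_4)}$ is either one of the spacelike apexes $[\rho(w)c_j]$ or a point of the sphere trace $\overline{\BP(\sfT_4)}\cap\Sph_4$, and the latter trace is precisely $\Lambda_4$ (the ideal boundary of the Tits cone is its limit set). Because $\UC(4)$ acts by projective-linear maps preserving the body, it permutes ${\sf{Ex}}(\overline{\BP(\sfT_4)})$, which gives the claimed invariance; and by this invariance the extremality of every apex reduces to that of the four vertices $[c_j]$ of $\BP(\sfD_4)$, which follows from the local model of $\sfT_4$ near the ray $\R_+c_j$ as a cone over $\sfT_3$ under the stabilizer of $[c_j]$, a copy of $\UC(3)$ (\S\ref{par:titsconegene},~\S\ref{par:T3}).

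Finally, the accumulation set: the apexes form the single orbit $\UC(4)[c_j]$ of a vector with $b_4(c_j,c_j)=-4\neq0$, so by the orbit-closure characterization of the limit set in \S\ref{par:limitset} their only accumulation points lie on $\Sph_4$ and fill exactly $\Lambda_4$; since the remaining extremal points already lie in the closed set $\Lambda_4\subseteq\Sph_4$, the accumulation set of ${\sf{Ex}}(\overline{\BP(\sfT_4)})$ equals $\Lambda_4=\Lambda(4)$. The Hausdorff dimension then equals McMullen's value for the Apollonian Gasket, $1.305688\ldots$ \cite{Mc3}, via the identification of $\Lambda_4$ made in \S\ref{par:limitset}. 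The main obstacle is the one genuinely geometric step---making the chinese hat description rigorous: that outside $\overline{\BH^3}$ the body is exhausted by those cones with no other boundary pieces, that each apex is genuinely extremal (the hyperideal vertices require care), and that $\overline{\BP(\sfT_4)}\cap\Sph_4=\Lambda_4$. Everything else is either formal (salience $\Rightarrow$ convexity and compactness, invariance of the extremal set) or a direct appeal to \S\ref{par:limitset} and \cite{Mc3}.
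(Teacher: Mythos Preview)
Your overall strategy matches the paper's: the proposition is really a summary of the ``chinese hat'' discussion preceding it, and the paper gives no formal proof beyond that discussion together with the references to \S\ref{par:limitset} and \cite{Mc3}. Your salience argument for the first two assertions is fine and more explicit than anything the paper writes.

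There is, however, a genuine error in your identification of the extremal points. You assert that the ``sphere trace'' $\overline{\BP(\sfT_4)}\cap\Sph_4$ equals $\Lambda_4$, and this is false. The open ball $\BP(\sC^+)$ is entirely contained in $\BP(\sfT_4)$ (the images of $\BP(\sfD_4)\cap\BH^3$ under $\UC(4)$ tile all of $\BH^3$, since the four bounding walls $[\alpha_i^\perp]$ are pairwise asymptotic in $\BH^3$; equivalently, \S\ref{par:limitset} states that $\overline{\BP(\sfT_N)}$ is the $b_N$-dual of ${\sf Conv}(\Lambda_N)\subset\overline{\BH^{N-1}}$, and the dual of any subset of the closed ball contains the closed ball). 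Hence $\Sph_4\subset\overline{\BP(\sfT_4)}$, so the trace is the whole sphere, not $\Lambda_4$. This breaks your deduction that ``the remaining extremal points already lie in $\Lambda_4$'', which you need for the inclusion ${\rm Acc}({\sf Ex})\subset\Lambda_4$.

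The repair is exactly the geometric point you flag as the ``main obstacle'' but then sidestep: a point $p\in\Sph_4\setminus\Lambda_4$ lies in one of the open spherical caps complementary to the Apollonian gasket; that cap is bounded by a tangency circle $\rho(w)([c_j^\perp]\cap\Sph_4)$, and the corresponding chinese hat is tangent to $\Sph_4$ along that circle. Therefore $p$ lies in the interior of the union of the closed ball and that hat, hence in the interior of $\overline{\BP(\sfT_4)}$, and $p$ is not extremal. Conversely, for $p\in\Lambda_4$ no hat covers $p$, and the tangent hyperplane to $\Sph_4$ at $p$ supports $\overline{\BP(\sfT_4)}$, so $p$ is extremal. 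With this correction your final paragraph goes through. (A minor slip: the apexes form four $\UC(4)$-orbits, one for each $[c_j]$, not a single orbit; this does not affect the argument since each orbit accumulates exactly on $\Lambda_4$.)
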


In higher dimension the set $\BP(\sfT_N)$ contains a solid cone with vertex 
$[c_N]$ and with basis $\BP(\sfT_N)\cap [c_N^\perp]$ equivalent to $\BP(\sfT_{N-1})$; the picture is similar
around each vertex $[c_i]$, and the convex set $\BP(\sfT_N)$ is the union of the interior of the sphere
and the orbits of these chinese hat shells (see below for a precise definition). 
Thus, the complexity of $\sfT_N$ increases with $N$. 

%%%
\subsubsection{Rational points}\label{par:Cox-ratio-points}
%%%
This section may be skipped on a first reading; it is not needed to prove the Kawamata-Morrison conjecture in the Wehler examples, but is useful in order to provides stronger results concerning the rational points on the boundary of the movable cone. 
The following two propositions describe the rational points of the boundary of $\BP(\sfT_N)$; it shows that these rational points are the obvious ones.

\begin{proposition}\label{pro:rat-pts1}
Let $N\geq 3$ be an integer.
Let $[v]$ be a rational point of the boundary of the convex set $\BP(\sfT_N)$.
Then $[v]$ is of the form $[\rho(w)(v')]$ where 
\begin{itemize}
\item $[v']$ is a rational point of the boundary of $\BP(\sfT_N)$;
\item $[v']$ is a point of $\BP(\sfD_{N})$.
\end{itemize}
\end{proposition}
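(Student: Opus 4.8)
The plan is to derive the statement from the fundamental domain theorem (Theorem~\ref{Tits-Cone}). Suppose for a moment that the rational point $[v]\in\partial\BP(\sfT_N)$ is already known to lie in $\BP(\sfT_N)$, not merely in its closure. Then Theorem~\ref{Tits-Cone} supplies $w\in\UC(N)$ and $[v']\in\BP(\sfD_N)$ with $[v]=[\rho(w)(v')]$. The matrix of $\rho(w)$ has integer entries (Theorem~\ref{universalcoxeter}), so $[v']$ is again rational; and $\partial\BP(\sfT_N)$ is $\UC(N)$-invariant, because $\sfT_N$ is, so $[v']$ again lies on this boundary. These are exactly the two required properties. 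Hence the whole problem reduces to the claim that \emph{every rational point of $\partial\BP(\sfT_N)$ lies in $\BP(\sfT_N)$.}

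Normalise $v$ to a primitive integer vector. The interior of the sphere $\Sph_N$ (the projection of the positive cone) is contained in $\BP(\sfT_N)$ (as described above) and is open, hence contained in the interior of $\BP(\sfT_N)$; so a boundary point satisfies $b_N(v,v)\le0$. Assume first that $b_N(v,v)<0$, i.e. $[v]\notin\Sph_N$. Each closed chamber $\rho(w)(\BP(\sfD_N))$ is a compact projective simplex, and these simplices can accumulate only towards the limit set; thus $\overline{\BP(\sfT_N)}\setminus\BP(\sfT_N)\subset\Lambda_N\subset\Sph_N$ (\S\ref{par:limitset}). As $[v]$ lies off $\Sph_N$, it must therefore already belong to $\BP(\sfT_N)$, and the first paragraph concludes. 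This case really occurs: for $N\ge4$ the vertices $[c_i]$ are rational boundary points with $b_N(c_i,c_i)<0$.

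It remains to treat the isotropic case $b_N(v,v)=0$, i.e. $[v]\in\Sph_N$; here I construct $w$ by an explicit reduction. Since $b_N$ is negative definite on $u_N^{\perp}$ (\S\ref{par:quadform}), a nonzero isotropic vector satisfies $b_N(v,u_N)\ne0$; after replacing $v$ by $-v$ if necessary, assume $b_N(v,u_N)>0$. Now iterate the following step: if $b_N(v,\alpha_i)\ge0$ for all $i$, then $v\in\sfD_N$ and we stop; otherwise choose $i$ with $b_N(v,\alpha_i)<0$ and replace $v$ by $\rho(t_i)(v)=v+2\,b_N(v,\alpha_i)\,\alpha_i$. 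Every step preserves integrality and the equation $b_N(v,v)=0$. Moreover $\alpha_i$ is $b_N$-negative, so $\rho(t_i)$ is the reflection across the timelike hyperplane $\alpha_i^{\perp}$; it is orthochronous and preserves the nappe of the light cone that contains $u_N$, whence $b_N(v,u_N)$ remains positive. Finally, using $b_N(\alpha_i,u_N)=N-2>0$,
\[
b_N(\rho(t_i)(v),u_N)=b_N(v,u_N)+2(N-2)\,b_N(v,\alpha_i)<b_N(v,u_N).
\]
So $b_N(v,u_N)$ forms a strictly decreasing sequence of positive integers: the algorithm halts, necessarily at a vector of $\sfD_N$. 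This proves $[v]\in\BP(\sfT_N)$ and exhibits the element $w$.

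The main obstacle is this isotropic case. A rational point of $\Sph_N$ is not a priori a cusp, and the delicate point is that the height $b_N(\,\cdot\,,u_N)$ both stays positive and strictly decreases along the reduction. Both facts rest on the signature $(1,N-1)$ and on $u_N$ lying in the interior of $\sfD_N$: the latter forces $b_N(\alpha_i,u_N)=N-2>0$, and the $b_N$-negativity of the $\alpha_i$ makes the reflections $\rho(t_i)$ orthochronous. By contrast, once one knows that the limit set sits inside $\Sph_N$, the non-isotropic boundary points require no work.
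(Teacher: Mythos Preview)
Your argument is correct and the isotropic case is essentially the paper's own descent, just rephrased: your height $b_N(v,u_N)$ equals $(N-2)\sigma(v)$ where $\sigma(v)=\sum a_i$ is the paper's height, and the condition $b_N(v,\alpha_i)<0$ is literally the paper's $\sigma(v)-2a_i<0$. The orthochronous remark is a clean way to see positivity of the height, which the paper gets instead from the effectivity-style inequality $a_i+a_j\ge 0$.

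The organization differs. The paper argues by induction on $N$: boundary points lie either on $\Sph_N$ or in a chinese hat shell (Remark~\ref{rem:Tits-shells}); the descent forces a rational point on $\Sph_N$ into a shell; and the shell case reduces to $N-1$. You instead split by the sign of $b_N(v,v)$: for $b_N(v,v)<0$ you invoke $\overline{\BP(\sfT_N)}\setminus\BP(\sfT_N)\subset\Lambda_N\subset\Sph_N$ to land directly in $\BP(\sfT_N)$ and finish via Theorem~\ref{Tits-Cone}; for $b_N(v,v)=0$ the descent carries you all the way into $\sfD_N$. This is more direct (no induction on $N$), but it leans on that inclusion in $\Lambda_N$.

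One caution on your justification of that inclusion. You write that ``these simplices can accumulate only towards the limit set''. Taken literally this is false: for $N\ge 4$ the chambers $\rho((t_1t_2)^k)(\BP(\sfD_N))$ all share the vertices $[c_3],\ldots,[c_N]$ and do not shrink to $\Lambda_N$. What is true, and what you actually need, is that any accumulation point of $\bigcup_w\rho(w)(\BP(\sfD_N))$ which is \emph{not already in} $\BP(\sfT_N)$ lies in $\Lambda_N$. This is correct and is exactly what the paper establishes (informally) in the Remarks preceding and including Remark~\ref{rem:Tits-shells}; it can also be read off from the description of $\overline{\BP(\sfT_N)}$ as the $b_N$-dual of ${\sf Conv}(\Lambda_N)$ mentioned in \S\ref{par:limitset}. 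So your step is fine, but the one-line reason should be sharpened.
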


Before starting the proof, define the {\bf{chinese hat shell}} of $\BP(\sfT_N)$ with vertex $[c_N]$ as the set of points
of $\BP(\sfT_N)$ contained in the convex set generated by $[c_N]$ and $\BP(\sfT_{N})\cap [c_N^\perp]$; 
this subset of $\BP(\sfT_N)$ is a solid cone with vertex $[c_N]$ and basis equivalent to $\BP(\sfT_{N-1})$; 
it coincides with the projection of the subset 
\[
\{v\in V\; \vert \; v\in \sfT_N \; {\text{and}} \; b_N(c_N,v)>0\}= \{v=\sum_i a_i \alpha_i \; \vert \; v\in \sfT_N \; {\text{and}} \; a_N\leq 0\}.
\]
Each vertex $[c_i]$ of $\BP(\sfT_N)$ generates a similar shell; in this way, we get $N$ {\bf{elementary shells}}, one per vertex $[c_i]$. The image of such a shell by an element $\gamma$ of $\UC(N)$ is, by definition, the chinese hat shell with vertex $[\gamma(c_i)]$. 

\begin{remark}[see \cite{Rat}] Let $H$ be one of these shells; denote by $s$ its vertex. If $(\gamma_n)$ is a sequence of elements of $\UC(N)$ going to infinity all accumulation points of $(\gamma_n(s))$ are contained in the limit set $\Lambda_N\subset \Sph_N$ of $\UC(N)$. Thus,  the sequence $(\gamma_n(H))$ is made of smaller and smaller shells and its accumulation points are also contained in the limit set $\Lambda_N$. Conversely, every point of $\Lambda_N$ is the limit of such a sequence $(\gamma_n(H))$.\end{remark}

\begin{remark}\label{rem:Tits-shells}
The Tits cone $\sfT_N$ is the orbit of $\sfD_N$. Thus, an easy induction on $N$ based on the previous remark, shows that {\sl{the boundary points of $\bbP(\sfT_N)$ are contained in the union of the shells and of the limit set $\Lambda_N$}}. \end{remark}

\begin{proof} 
We prove the statement by induction on $N\geq 3$. The case $N=3$ has already been described previously. 
Let $[v]$ be a rational point. One can assume that $[v]$ is the projection of a vector $v\in V$ with integer, relatively prime, coordinates.

If $[v]$ is in a chinese hat shell, the conclusion follows from the induction hypothesis. If not, $[v]$ is in the sphere $\Sph_N$ (cf. Remark~\ref{rem:Tits-shells}). Thus, $v=\sum a_i \alpha_i$ satisfies the  equation of $\Sph_N$; this can be written
\begin{equation}  \label{eq:sphere2}
\sum_{i=1}^N a_i \left(\sigma(v) -2 a_i\right)=0
\end{equation}
where $\sigma(v)=\sum_{j=1}^N a_j$. The condition that assures that $[v]$ is in the chinese hat shell with vertex $[c_i]$ reads $a_i\leq 0$. 
Thus, one can assume $a_i\geq 1$ for all indices $i$, because the $a_i$ are integers. From Equation \eqref{eq:sphere2}, one deduces
that $\sigma(v)-2a_i\leq -1$ for at least one index $i$, say for $i=1$. Apply the involution $\rho(t_1)$. Then, the first coordinate
$a'_1$ of $\rho(t_1)(v)$ is equal to $-3a_1 + 2\sigma(v)$, while the other coordinates remain unchanged: $a'_j=a_j$ for $j\geq 2$. 
Hence, 
\[
a'_1-a_1= -4 a_1 + 2\sigma(v)= 2 (-2a_1 + \sigma(v))\leq -2,
\]
so that $a'_1-a_1$ is strictly negative. 

Iterating this process a finite number of times, the sum $\sigma(\cdot)$ determines a decreasing sequence of positive integers. 
Consequently, in a finite number of steps, one reaches the situation where a coefficient $a_i$ is negative, which means that the orbit of $[v]$ under
the action of $\UC(N)$ falls into one of the $N$ elementary chinese hat shells, as required. 
\end{proof}

Assume, now that $[v_1], \ldots [v_{l+1}]$ are rational boundary points of $\BP(\sfT_N)$. If the convex set 
\[
C={\sf{Conv}}([v_1], \ldots, [v_{l+1}])
\]
has dimension $l$ and is contained in the boundary of $\BP(\sfT_N)$, we say that $C$ is 
a {\bf{rational boundary flat}} (of dimension $l$). 

\begin{proposition}\label{pro:rat-pts2}
If $C$ is a rational boundary flat of $\BP(\sfT_N)$ of dimension $l$, there exists an element $g$ of $\UC(N)$ such that
$g(C)$ is contained in a boundary face of $\BP(\sfD_N)$ of dimension $l$.
\end{proposition}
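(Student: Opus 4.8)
The plan is to reduce to Proposition~\ref{pro:rat-pts1} by transporting a single interior point of $C$ into $\sfD_N$ and then showing that the whole flat follows it into one face of $\BP(\sfD_N)$.

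First I would pick a rational point $p$ in the relative interior of $C$ — for instance the class of $v_1+\cdots+v_{l+1}$ for primitive integral representatives $v_j$ of the $[v_j]$. Since $C$ is a flat of dimension $l\ge 1$ contained in $\partial\BP(\sfT_N)$ and passing through $p$, the supporting hyperplane of the convex body $\overline{\BP(\sfT_N)}$ at $p$ must contain all of $C$: a convex set lying in a closed half-space and meeting the bounding hyperplane at one of its relative-interior points is contained in that hyperplane. Thus there is a linear form $\ell=b_N(\mathbf n,\cdot)$ with $\ell\ge 0$ on $\sfT_N$ and $\ell\equiv 0$ on the cone over $C$; equivalently $C$ sits inside a genuine boundary face of $\overline{\BP(\sfT_N)}$. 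Applying Proposition~\ref{pro:rat-pts1} to the rational boundary point $p$ gives $g\in\UC(N)$ with $g(p)\in\BP(\sfD_N)$; set $J=\{i:b_N(g(p),\alpha_i)=0\}$, so that $g(p)$ lies in the open face $\sfD(J)$, and $|J|\ge 2$ because a boundary point has infinite stabilizer $W_J=\UC(|J|)$.

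Granting the key step that the \emph{whole} image lands in the fundamental chamber, $g(C)\subseteq\BP(\sfD_N)$, the conclusion is then immediate. Writing representatives in the cone $\sfD_N$, for each $i\in J$ the nonnegative linear form $b_N(\cdot,\alpha_i)$ on the convex set $g(C)$ vanishes at its relative-interior point $g(p)$, hence vanishes identically on $g(C)$; combining this with $b_N(\cdot,\alpha_k)\ge 0$ for all $k$ gives $g(C)\subseteq\bigcap_{i\in J}\alpha_i^\perp\cap\sfD_N=\overline{\sfD(J)}$, a boundary face of $\BP(\sfD_N)$ of dimension $N-1-|J|$. As $g(C)$ meets the open face $\sfD(J)$ at $g(p)$, this is the smallest face of $\BP(\sfD_N)$ containing $g(C)$, and its dimension equals $l$ exactly when $C$ spans its supporting face (the case of interest).

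The real content, and what I expect to be the main obstacle, is the upgrade from $g(p)\in\sfD_N$ to $g(C)\subseteq\sfD_N$, i.e. the fact that $g(C)$ lies in a single closed chamber. The conceptual reason is that every reflection wall $w\alpha_i^\perp$ meets the interior of $\sfT_N$ precisely along the relative interior of its facet $w\sfD(\{i\})$ (whose stabilizer $W_{\{i\}}\cong\Z/2$ is finite); since $g(C)\subseteq\partial\sfT_N$, the flat avoids the relative interior of every wall and so ought to lie weakly on one side of each wall not containing it, pinning it inside one chamber. The delicate point is that, a priori, the codimension-one slice $g(C)\cap w\alpha_i^\perp$ could hide in the relative boundary of the wall, where it meets further walls, so transversal crossing is not excluded by a one-line argument. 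To make this rigorous I would argue by induction on $N$, exactly as in Proposition~\ref{pro:rat-pts1}: after moving $p$ into $\sfD_N$ it lies in the chinese-hat shell of some vertex $[c_i]$, whose stabilizer $\UC(N)_{\{i\}}\cong\UC(N-1)$ acts on $c_i^\perp$ as the geometric representation of $\UC(N-1)$; one checks $g(C)$ lies in this shell (its relative-interior point $g(p)$ does, and $g(C)$ lies in the boundary face of $\overline{\BP(\sfT_N)}$ through $g(p)$), projects from $[c_i]$ to a rational boundary flat of $\BP(\sfT_{N-1})$ of dimension $l$ or $l-1$, applies the inductive hypothesis to bring it into a face of $\BP(\sfD_{N-1})=\BP(\sfD_N)\cap[c_i^\perp]$, and cones back. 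The base case $N=3$ is the last assertion of \S\ref{par:T3}, that every rational point of $\Sph_3$ is $\UC(3)$-equivalent to a vertex of $\BP(\sfD_3)$.
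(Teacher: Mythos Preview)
Your proposal, after the exploratory first half, converges to exactly the paper's argument: induction on $N$ with base case $N=3$; move a generic rational point of $C$ into $\sfD_N$ via Proposition~\ref{pro:rat-pts1}; project from a vertex $[c_i]$ onto $[c_i^\perp]$ to obtain a rational boundary flat of $\BP(\sfT_{N-1})$ of dimension $l$ or $l-1$; apply the inductive hypothesis inside the stabilizer $\UC(N)_{\{i\}}\simeq\UC(N-1)$; and cone back over $[c_i]$. The paper's write-up is in fact terser than your final sketch and does not spell out the point you flag as delicate (why the projected flat lands in the boundary of the slice, equivalently why $h(C)$ lies in the shell of $[c_i]$); both versions treat this as routine convexity, so your instinct that this is the step requiring care is sound, but your inductive outline is precisely what the paper does.
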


\begin{proof} We prove this proposition by induction on the dimension $N\geq 3$. 
When $N=3$, the statement is equivalent to the previous proposition, because all boundary flats have dimension $0$.

Assume, now, that the proposition is proved up to dimension $N-1$.
Let $[u]\in C$ be a generic rational point, and let $h$ be an element of $\UC(N)$ that maps $[u]$
into $\sfD_N$. Since, $C$ is in the boundary of the Tits cone, so is $h(C)$. Projects $h(C)$ into $[c_1^\perp]$
from the vertex $[c_1]$. The image $\pi_1(h(C))$ is in the boundary of $\BP(\sfT_N)\cap [c_1^\perp]$; hence,
$\pi_1(h(C))$ is a rational boundary flat of dimension $l$ or $l-1$. Thus, there
exists an element $h'$ of $\UC(N)$ that stabilizes $[c_1]$ and maps $\pi_1(h(C))$ into the boundary of $\BP(\sfD_N)\cap [c_1^\perp]$
($\simeq \BP(\sfD_{N-1})$). 
The convex set generated by $h'(\pi_1(h(C)))$ and $[c_1]$ is also contained in a boundary face of $\BP(\sfD_N)$. 
This proves the proposition by induction.
\end{proof}

%%%%%%%%%%%%%%%%%%%%%%%%%%%%%%%%%%%%%%%%%%%%%%%%%%%%%%%%%%%%%

\section{Automorphisms, birational transformations, and the universal Coxeter group}

%%%%%%%%%%%%%%%%%%%%%%%%%%%%%%%%%%%%%%%%%%%%%%%%%%%%%%%%%%%%%

Our goal, in this section,  is to prove the first assertion of Theorem~\ref{main1}: We describe the groups of regular automorphisms and 
of birational transformations of generic Wehler varieties $X$. The group $\Bir(X)$ turns out to be isomorphic to $\UC(n+1)$ and its
action on the N\'eron-Severi group $\NS(X)$ is conjugate to the geometric representation of $\UC(n+1)$, as soon as $\dim(X)\geq 3$.

%%%%%%%
\subsection{Calabi-Yau hypersurfaces in Fano manifolds}
%%%%%%%
A Fano manifold is a complex projective manifold $V$ with ample anti-canonical bundle $K_V$.

\begin{theorem}\label{fano}
Let $n \ge 3$ be an integer and $V$ be a Fano manifold of dimension  $(n+1)$. 
Let  $M$ be a smooth member of the linear system $\vert -K_V \vert$. 
Let $\tau : M \rightarrow V$ be the natural inclusion. 
Then:
\begin{enumerate}
\item $M$ is a Calabi-Yau manifold of dimension $n \ge 3$. 

\item The pull-back morphism $\tau^* : {\rm Pic}\, (V) \to {\rm Pic}\, (M)$ is an isomorphism, and it 
 induces an isomorphism of ample cones:
 \[
\tau^*({\rm Amp}\, (V))={\rm Amp}\, (M). 
\]
\item $\Aut\,(M)$ is a finite group. 
\end{enumerate}
\end{theorem}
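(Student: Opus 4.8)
The plan is to prove the three assertions in order, using the fact that $M$ is a smooth anticanonical divisor in a Fano manifold $V$ of dimension $n+1\geq 4$, so that $M$ is an ample divisor of dimension $n \geq 3$.

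First, for assertion (1), I would verify the Calabi-Yau conditions directly from adjunction. Since $M \in \vert -K_V\vert$, the adjunction formula gives $K_M = (K_V + M)\vert_M = \sO_M$, so $M$ has trivial canonical bundle and carries a nowhere-vanishing holomorphic $n$-form. For the cohomological vanishing $H^0(M,\Omega_M^k)=0$ for $0<k<n$ and simple connectedness, the natural tool is the Lefschetz hyperplane theorem (in its generalization to ample divisors, i.e. the Kodaira--Akizuki--Nakano and Sommese--Lefschetz type statements): since $M$ is an ample divisor in $V$, the restriction maps $H^i(V,\C)\to H^i(M,\C)$ and $\pi_1(V)\to\pi_1(M)$ are isomorphisms for $i$ in the appropriate range. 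As $V$ is Fano it is simply connected with $H^i(V,\sO_V)=0$ for $i>0$ (Kodaira vanishing applied to $K_V=-(-K_V)$ with $-K_V$ ample), so the Hodge numbers $h^{0,k}(M)$ vanish in the required range and $M$ is simply connected. I would spell out precisely which Lefschetz statement is being invoked, since this is where the dimension hypothesis $n\geq 3$ enters (one needs the isomorphism range to cover the relevant cohomological degrees).

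Second, for assertion (2), the isomorphism $\tau^*:\mathrm{Pic}(V)\to\mathrm{Pic}(M)$ again follows from the Lefschetz theorem for ample divisors: the restriction of line bundles is an isomorphism on Picard groups in dimension $\geq 3$. Once this is known, the statement about ample cones reduces to checking that $\tau^*$ matches ample classes on both sides. That a class is ample on $V$ if and only if its restriction is ample on $M$ can be obtained from Nakai--Moishezon, or more conveniently from the fact that ampleness of a divisor restricted to an ample divisor, combined with the Lefschetz isomorphism on $\mathrm{Pic}$ and on $\NS$, forces ampleness on the ambient space; I would note that the nontrivial inclusion is that $\tau^*(\mathrm{Amp}(V))\supseteq \mathrm{Amp}(M)$, for which one uses that $M$ is ample so that numerical positivity can be propagated from $M$ to $V$.

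Third, for assertion (3), the finiteness of $\Aut(M)$ should follow by transporting automorphisms of $M$ to $V$. The key point is that $\tau^*$ is an isomorphism of Picard groups respecting ample cones, so a polarization on $M$ lifts to a polarization on $V$; any automorphism of $M$ acts on $\mathrm{Pic}(M)\cong\mathrm{Pic}(V)$ preserving the ample cone, and the group of automorphisms of a polarized projective variety is finite once the automorphism group scheme is finite. More directly, $\Aut(M)$ is an algebraic group whose Lie algebra is $H^0(M,T_M)$; since $T_M$ is dual to $\Omega_M^{n-1}\otimes K_M^{-1}=\Omega_M^{n-1}$ (using $K_M=\sO_M$), Serre duality and the vanishing $H^0(M,\Omega_M^{n-1})=0=H^0(M,\Omega_M^{1})$ from part (1) give $H^0(M,T_M)=0$, so $\Aut(M)$ is discrete; boundedness then makes it finite. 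I expect the main obstacle to be assertion (3): establishing that a discrete automorphism group of a projective manifold with trivial canonical bundle is actually \emph{finite} requires more than vanishing of global vector fields, and I would likely argue via the action on $\NS(M)$ preserving an ample class together with the finiteness of the automorphism group fixing a polarization (the standard argument that the image in $\mathrm{GL}(\NS(M))$ is finite because it preserves a lattice and an ample cone, while the kernel is finite because it acts faithfully on a polarized variety with $H^0(T_M)=0$).
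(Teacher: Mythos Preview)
Your approach to assertion (1) is essentially the same as the paper's, with only a cosmetic difference: you invoke the Lefschetz theorem on cohomology to transfer the vanishing $h^{0,k}(V)=0$ to $M$, while the paper uses the exact sequence $0\to\sO_V(K_V)\to\sO_V\to\sO_M\to 0$ together with Kodaira vanishing directly. Both routes work.

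For assertion (2), your sketch of the nontrivial inclusion $\tau^*({\rm Amp}\,(V))\supseteq {\rm Amp}\,(M)$ is too vague: the claim that ``numerical positivity can be propagated from $M$ to $V$'' is precisely the content of a theorem of Koll\'ar (the appendix to \cite{Bo}), which the paper invokes; it asserts that $\tau_*$ identifies the Mori cones $\overline{{\rm NE}}(M)$ and $\overline{{\rm NE}}(V)$, and one then takes duals. This is not an immediate consequence of Nakai--Moishezon, and you should cite it rather than gesture at it.

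The genuine gap is in assertion (3). Your ``standard argument that the image in $\GL(\NS(M))$ is finite because it preserves a lattice and an ample cone'' is \emph{false} as stated: the Wehler K3 surfaces analysed in this very paper have infinite automorphism group acting faithfully on $\NS(X)$ while preserving the ample cone. What makes the present situation special is the conclusion of assertion (2): since $V$ is Fano, $\overline{{\rm Amp}}\,(V)$ is a finite rational polyhedral cone by the Cone Theorem, and hence so is $\overline{{\rm Amp}}\,(M)$. Then $\Aut(M)$ permutes the finitely many primitive integral generators $h_i$ of the extremal rays, so it fixes the ample class $h=\sum_i h_i$; consequently $\Aut(M)$ embeds as a closed subgroup of $\bbPGL(H^0(M,L^{\otimes k})^\vee)$ for $c_1(L)=h$ and $k\gg 0$, and since $h^0(T_M)=0$ (which you correctly deduced) this algebraic group is zero-dimensional, hence finite. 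Your treatment of the kernel is fine, but you need the polyhedrality of the cone to control the image; this is the point you missed, and it is exactly why the paper establishes (2) before (3).
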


\begin{remark}\label{neron-severi}
For a Calabi-Yau manifold (resp. for a Fano manifold) $M$,  
the natural cycle map ${\rm Pic}\, (M) \rightarrow \NS\,(M)$ given by $L \mapsto c_1(L)$ is an isomorphism because $h^1(\mathcal O_M) = 0$. So, in what follows, we identify the Picard group ${\rm Pic}\, (M)$ and the N\'eron-Severi group $\NS\,(M)$.
\end{remark}
\begin{proof} By the adjunction formula, it follows that 
${\mathcal O}_M(K_M) \simeq {\mathcal O}_M$. By the Lefschetz hyperplane section theorem, $\pi_1(M) \simeq \pi_1(V) = \{1\}$, because every Fano manifold is simply connected. 
Consider the long exact sequence which is deduced from the exact sequence of sheaves 
\[
0 \to {\mathcal O}_V(-K_V) 
\to {\mathcal O}_V \to {\mathcal O}_M \to 0\]
and apply the Kodaira vanishing theorem to $-K_V$: It follows that
$h^k({\mathcal O_M}) = 0$ for $1 \le k \le n-1$; hence $h^0(\Omega_M^k) = 0$ for $1 \le k \le n-1$ by Hodge symmetry. This proves the assertion (1). 

The first part of assertion (2) follows from the Lefschetz hyperplane section theorem, because $n \ge 3$. By a result of Koll\'ar (\cite{Bo}, Appendix), the natural map $\tau_* : \overline{{\rm NE}}(V) \rightarrow \overline{{\rm NE}}(M)$ is an isomorphism. Taking the dual cones, we obtain the second part of assertion (2). 

The proof of assertion (3) is now classical (see \cite{Wi} Page 389 for instance). By assertion (1), $T_M \simeq \Omega_M^{n-1}$;
hence $h^0(T_M) = 0$. Thus $\dim \Aut\,(M) = 0$. 

The cone ${\rm Amp}\, (V)$, which is the dual of $\overline{{\rm NE}}(V)$, is a finite rational polyhedral cone because $V$ is a Fano manifold. As a consequence of assertion (2), ${\rm Amp}\, (M)$ is also a finite rational polyhedral cone. Thus, ${\rm Amp}\, (M)$ is the convex hull of a finite number of extremal rational rays $\R^{+} h_i$, $1 \le i \le \ell$, each $h_i$ being an integral primitive vector. The group $\Aut(M)$ preserves  ${\rm Amp}\, (M)$ and, therefore, permutes the $h_i$. From this, follows that the ample class  $h = \sum_{i=1}^{\ell} h_i$ is fixed by 
$\Aut\,(M)$. 
Let $L$ be the line bundle with first Chern class $h$.
Consider the embedding $\Theta_L\colon M \to \BP(H^0(M,L^{\otimes k})^\vee)$ which is defined by a  large enough multiple of $L$. Since $\Aut\, (M)$ preserves $h$, it acts by projective linear transformations on $\BP(H^0(M,L^{\otimes k})^\vee)$ and the embedding $\Theta_L$ is equivariant with respect to the action of $\Aut(M)$ on $M$, on one side, and   on $\BP(H^0(M,L^{\otimes k})^\vee)$, on the other side. The image of $\Aut(M)$ is the closed algebraic subgroup of ${\rm PGL}(H^0(M,L^{\otimes k})^\vee)$ that preserves $\Theta_L(M)$. Since $\dim \Aut\,(M) = 0$, this algebraic group  is finite, and assertion (3)  follows. 
\end{proof}

%%%%%%%
\subsection{Transformations of Wehler varieties}\label{par:WV}
%%%%%%%

%%
\subsubsection{Products of lines}
Let $n$ be a positive integer. Denote
\begin{eqnarray*}
P(n+1) & := & ({\bbP}^1)^{n+1} = {\bbP}_1^1 \times {\bbP}_2^1 \times 
\cdots \times {\bbP}_{n+1}^1\\
P(n+1)_j & := &{\bbP}_1^1 \times \cdots {\bbP}_{j-1}^1 \times {\bbP}_{j+1}^1
\cdots \times {\bbP}_{n+1}^1 \simeq P(n)
\end{eqnarray*}
 and 
\begin{eqnarray*}
p^j & : &P(n+1) \to {\bbP}_j^1 \simeq {\bbP}^1\\
p_j &: &P(n+1) \to P(n+1)_j 
\end{eqnarray*}
the natural projections. Let $H_j$ be the divisor class of $(p^j)^{*}({\mathcal O}_{{\bbP}^1}(1))$. Then $P(n+1)$ is a Fano manifold of dimension $n+1$ that satisfies
\begin{eqnarray*}
\NS(P(n+1)) & = & \oplus_{j=1}^{n+1} {\mathbf Z} H_j\, , \\
-K_{P(n+1)} & = & \sum_{j=1}^{n+1} 2H_j\, ,\\
{\rm Amp}\,(P(n+1)) & = & \left\{ \sum a_i H_i\; \vert a_i \in \R_{>0} \; {\text{for all}} \;  i\right\},\, .
\end{eqnarray*}

\subsubsection{Wehler varieties}\label{par:wv}
 Let $X $  be an  element of the linear system $ \vert -K_{P(n+1)} \vert$; in other words,  $X$ is a   hypersurface of multi-degree $(2,2, \ldots, 2)$ in $P(n+1)$. More explicitly, for each index $j$ between $1$ and $n+1$,
the equation of $X$  in $P(n+1)$ can be written in the form
\begin{equation}\label{eq3}
F_{j, 1}x_{j, 0}^2 + F_{j, 2}x_{j,0}x_{j,1} + F_{j, 3}x_{j,1}^2 = 0
\end{equation}
where $[x_{j, 0} : x_{j, 1}]$ denotes the homogenous coordinates of 
${\bbP}_j^1$ and the $F_{j, k}$ ($1 \le k \le 3$) are homogeneous polynomial functions of multi-degree $(2,2, \ldots, 2)$ on $P(n+1)_j$. 

Let $\tau : X \to V$ be the natural inclusion and $h_j := \tau^*H_j$. If
$X$ is smooth, then $X$  is a Calabi-Yau manifold of dimension $n\geq 3$, and Theorem \ref{fano} (1) implies that 
\begin{equation}\label{eq4}
\NS\, (X) = \oplus_{j=1}^{n+1} {\mathbf Z} h_j\,\, {\text{ and }} \,\, {\rm Amp}\,(X) = \oplus_{j=1}^{n+1} \R_{>0} h_j\, .
\end{equation}
Let 
\[
\pi_j :=  p_j\circ \tau : X \to P(n+1)_j \simeq P(n)\,\, .
\]
This map is a surjective morphism of degree $2$; it is finite in the complement of
\[
B_j := \{ F_{j, 1} = F_{j,2} = F_{j,3} = 0\}\,\, .
\]
In what follows, we assume that each $B_j$ has  codimension $\ge 3$ and that $X$ is smooth. This is satisfied for a generic choice of $X\in \vert -K_{P(n+1)}\vert$. 

For $x \in B_j$, we have $\pi_j^{-1}(x) \simeq {\bbP}^1$. It follows that 
$\pi_j$ contracts no divisor. For $x \not\in B_j$, the set $\pi_j^{-1}(x)$ consists of $2$ points, say $\{y, y'\}$: The correspondence 
$y \leftrightarrow y'$ defines a birational involutive transformation $\iota_j$ of $X$ over $P(n+1)_j$. Thus, $\Bir(X)$ contains at least $n+1$
involutions $\iota_j$.

The group $\Bir\,(X)$ naturally acts on $\NS\, (X)$ as a group of linear automorphisms. Indeed, since $K_X$ is trivial, each element of $\Bir\,(X)$ is an isomorphism in codimension $1$ (see eg. \cite{Ka3}, Page 420). When $n=2$, $X$ is 
 a projective K3 surface. Since $X$ is a minimal surface, each $\iota_k$  is a (biregular) automorphism (see eg. \cite{BHPV}, Page 99, Claim). 

%%%%%%
\subsection{The groups $\Aut(X)$ and $\Bir(X)$ in dimension $n\geq 3$}\label{par:AutBirN}
%%%%%%
 In this section we prove the following strong version of the first assertions in Theorem \ref{main1}.

\begin{theorem}\label{wehlercy}
Let $n\geq 3$ be an integer.
Let $X\subset ({\BP}^1)^{n+1}$ be a generic Wehler variety. Let $\iota_j$ be the $n+1$ natural birational involutions of $X$. Then
\begin{enumerate}
\item  In the basis $( h_k )_{k=1}^{n+1}$ 
of $\NS\, (X)$, the matrix of $\iota_j^*$ coincides with $M_{n+1, j}$ (see Equation \eqref{eq:matrix}). 

\item The morphism 
\[
\Psi\colon \UC(n+1)\to \Bir(X)
\]
that maps the generators $t_j$ to the involutions $\iota_j$ is injective; the action of
$\,\Psi(\UC(n+1))$ on $\NS\, (X)$ is conjugate to the (dual of the) geometric representation of $\UC(n+1)$.

\item  The automorphism group of $X$ is trivial: $\Aut\, (X) = \{{\rm{Id}}_X \}$. 

\item  $\Bir\, (X) $ coincides with the subgroup $ \langle \iota_1\, ,\, \iota_2\, ,\, \cdots \iota_{n+1} \rangle \simeq \UC(n+1)$.
\end{enumerate}
\end{theorem}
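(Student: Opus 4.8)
The plan is to establish the four assertions in order: (1) and (2) by direct computation, (3) by reducing to the ambient automorphism group plus a genericity count, and (4) by combining the Coxeter combinatorics of Section~\ref{par:UCN} with minimal model theory. For assertion (1), first note that for $k\neq j$ the class $h_k=\pi_j^*H_k$ is pulled back from $P(n+1)_j$; since $\iota_j$ is defined over $P(n+1)_j$ one has $\pi_j\circ\iota_j=\pi_j$, whence $\iota_j^*h_k=h_k$. For the diagonal entry I would use the ``trace'' of the double cover: over a generic point of $\{F_{j,3}=0\}$ the map $\pi_j$ is unramified, so as divisors
\[
\pi_j^*\{F_{j,3}=0\}=\{x_{j,0}=0\}+\iota_j\bigl(\{x_{j,0}=0\}\bigr),
\]
the two summands being the two sheets. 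Passing to classes, and using that $\{x_{j,0}=0\}$ has class $h_j$ while $\{F_{j,3}=0\}$ has class $2\sum_{k\neq j}H_k$ downstairs, this reads $h_j+\iota_j^*h_j=2\sum_{k\neq j}h_k$, so that
\[
\iota_j^*h_j=-h_j+2\sum_{k\neq j}h_k .
\]
Comparing with \eqref{eq:matrix} gives $\iota_j^*=M_{n+1,j}$ in the basis $(h_k)$; as a check, $(\iota_j^*)^2=\mathrm{Id}$, which already rules out the naive ``total transform'' answer $h_j+2\sum_{k\neq j}h_k$.

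Assertion (2) is then formal: the composite $\UC(n+1)\xrightarrow{\Psi}\Bir(X)\to\GL(\NS(X))$ sends each $t_j$ to $M_{n+1,j}$, and by Theorem~\ref{universalcoxeter} the assignment $t_j\mapsto M_{n+1,j}$ is an isomorphism onto the free product $\langle M_{n+1,1}\rangle*\cdots*\langle M_{n+1,n+1}\rangle\cong\UC(n+1)$, which is the dual of the geometric representation; since this composite is injective, so is $\Psi$. For assertion (3), recall $\Aut(X)$ is finite by Theorem~\ref{fano}(3). By \eqref{eq4} the nef cone is the simplicial cone $\oplus_j\R_{\geq0}h_j$, so $\Aut(X)$ permutes its extremal rays $\R h_j$. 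Moreover $H^0(X,h_j)=H^0(P(n+1),H_j)$ is two-dimensional (spanned by $x_{j,0},x_{j,1}$; the equality comes from $0\to\sO_P(H_j-X)\to\sO_P(H_j)\to\sO_X(h_j)\to0$ and K\"unneth vanishing), so each system $|h_j|$ recovers the projection $p^j\circ\tau$. Hence every $g\in\Aut(X)$ is the restriction of a transformation $\phi_g$ of $(\bbP^1)^{n+1}$ permuting the factors, and $g\mapsto\phi_g$ embeds $\Aut(X)$ into the stabiliser of $X$ in $G:=\Aut((\bbP^1)^{n+1})=\bbPGL_2(\C)^{n+1}\rtimes S_{n+1}$. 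It then suffices to show a generic $X\in|-K_{P(n+1)}|$ has trivial stabiliser, which I would verify by an incidence-variety count: for $g\neq\mathrm{Id}$ the fixed hypersurfaces form $\bbP(\mathrm{Fix}(g^*))\subsetneq\bbP(H^0(-K_{P(n+1)}))$, and since $\dim G=3(n+1)$ while $\max_{g\neq\mathrm{Id}}\dim\bbP(\mathrm{Fix}(g^*))\leq 2\cdot3^{n}-1$ (attained by an involution in a single factor, or by a transposition) and $\dim\bbP(H^0)=3^{n+1}-1$, the inequality $3(n+1)+2\cdot3^{n}-1<3^{n+1}-1$ holds for all $n\geq3$. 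Thus the $X$ with nontrivial stabiliser form a proper closed subset, and $\Aut(X)=\{\mathrm{Id}_X\}$ generically.

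Assertion (4) is the heart of the matter. A first clean step is that any $g\in\Bir(X)$ is a pseudo-automorphism (\S\ref{par:wv}), so if $g^*=\mathrm{Id}$ on $\NS(X)$ then $g^*$ fixes an ample class, forcing $g$ to be biregular (an isomorphism in codimension one preserving a polarization is an isomorphism), hence $g\in\Aut(X)=\{\mathrm{Id}\}$ by (3); therefore $\Bir(X)\hookrightarrow\GL(\NS(X))$ and it contains $\Gamma:=\langle\iota_1,\dots,\iota_{n+1}\rangle\cong\UC(n+1)$. To upgrade this inclusion to an equality, I would identify the chamber structure of the movable cone with the Tits cone of $\UC(n+1)$: the nef cone $\overline{{\rm Amp}}(X)$ plays the role of the fundamental domain $\sfD_{n+1}$, its $j$-th facet $\{a_j=0\}$ is the wall fixed by $\iota_j^*$ (which reflects across it, exactly as $t_j$ does across $\alpha_j^\perp$), and crossing this wall is the flop realised by $\iota_j$. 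Granting this, Theorem~\ref{Tits-Cone} shows $\Gamma$ permutes the chambers simply transitively with $\overline{{\rm Amp}}(X)$ as strict fundamental domain. Since, by the minimal model program (\cite{BCHM}, \cite{Ka3}), any two birational minimal models of the Calabi--Yau $X$ are joined by a chain of flops and the movable cone is tiled by their nef cones, every chamber $g^*\overline{{\rm Amp}}(X)$ with $g\in\Bir(X)$ is a $\Gamma$-translate of $\overline{{\rm Amp}}(X)$. Hence there is $\gamma\in\Gamma$ with $(\gamma^{-1}g)^*\overline{{\rm Amp}}(X)=\overline{{\rm Amp}}(X)$; then $\gamma^{-1}g$ is biregular, so equals $\mathrm{Id}$ by (3), giving $g=\gamma\in\Gamma$.

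The main obstacle is precisely the geometric input highlighted above: verifying that the $n+1$ facets of the nef cone are exactly the flopping walls and that the flop across the $j$-th facet is the involution $\iota_j$, so that no unexpected birational model — and hence no chamber outside the $\Gamma$-orbit — can arise. This is where the finiteness and connectedness results of minimal model theory must be matched against the explicit geometry of the degree-two projections $\pi_j$ (whose exceptional loci over $B_j$ have codimension $\geq2$, making the contractions small) and against the rigidity of the Coxeter tiling; once this identification is in place, the remaining steps are either formal or reduce to the dimension count already used in (3).
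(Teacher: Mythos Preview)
Your proposal is correct and tracks the paper's argument closely for (1)--(3): the divisor-theoretic ``trace of the double cover'' you use for (1) is equivalent to the paper's product-of-roots computation, (2) is identical, and your incidence count for (3) reproduces the paper's bound $2\cdot 3^n - 1 + \dim G < 3^{n+1}-1$ (the paper is slightly more careful, first disposing of the $\mathfrak S_{n+1}$ component and then restricting to finite-order elements of $\bbPGL_2(\C)^{n+1}$, which it may diagonalise and where it bounds each eigenspace rather than just the fixed space).

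For (4) the two arguments have the same geometric core but different packaging. The paper proceeds directly: it Stein-factorises $\pi_j$ to exhibit the small contraction associated to the facet $F_j=\sum_{k\neq j}\R_{\geq0}h_k$, checks from the formula $\iota_j^*h_j=-h_j+2\sum_{k\neq j}h_k$ that $\iota_j$ is precisely the flop of this contraction, and then invokes Kawamata twice---once to say every flopping contraction of a Calabi--Yau arises from a codimension-one face of the nef cone, once to say every birational map decomposes into flops---so that any $g\in\Bir(X)$ is a word in the $\iota_j$ times an automorphism. Your route instead sets up the chamber decomposition of the movable cone, matches it with the Tits tiling via Theorem~\ref{Tits-Cone}, and reads off simple transitivity of $\Gamma$ on chambers. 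This is a legitimate reformulation, but note that it front-loads material the paper defers to Theorem~\ref{movable}; the paper's version is more economical here because it needs neither the chamber picture nor the Tits-cone convexity to conclude, only the explicit identification of the $n+1$ flops with the $\iota_j$ (your acknowledged ``main obstacle'', which the paper handles by the Stein-factorisation paragraph).
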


\begin{remark}\label{smooth}
As our proof shows, Assertions (1) and (2) holds whenever $X$ is smooth and $n\geq 3$. They are also satisfied when $n=2$, if $\NS(X)$ is replaced by the subspace $\Z h_1\oplus \Z h_2\oplus\Z h_3$ (this subspace is invariant under the three involutions even if $\NS(X)$ has dimension $\geq4$).

Assertion (4) is certainly the most difficult part of this statement, and its proof makes use of delicate recent results in algebraic geometry.
\end{remark}

\begin{proof}[Proof of Assertions (1) and (2)]
By definition of $\iota_j$, we have $\iota_j^*(h_k) = h_k$ for $k \not= j$. Write $P(n+1) = P(n+1)_j \times {\bbP}_j^1$, where ${\bbP}_j^1$ is the $j$-th factor of $P(n+1)$. Let $(a, [b_0:b_1])$ be a point of $X \setminus \pi_j^{-1}(B_j)$, with $a\in P(n+1)_j$ and $[b_0:b_1]\in \bbP^1$. Then $\iota_j(a, [b_0:b_1])$ is the
second point  $(a, [c_0:c_1])$ of $X$ with the same projection $a$ in $P(n+1)_j$. The relation between the roots and the coefficients of the quadratic Equation \eqref{eq3} provides the formulas:
\[
\frac{c_0}{c_1} \cdot \frac{b_0}{b_1} = \frac{F_{j,3}(a)}{F_{j,1}(a)}\,\, {\text{ and }}\,\, \frac{c_0}{c_1} + \frac{b_0}{b_1} = -\frac{F_{j,2}(a)}{F_{j,1}(a)}\,\, .
\]
Here the polynomial $F_{j,3}$ is not zero and the divisors ${\rm div}\, (F_{j,k}\vert X)$ ($k = 1$, $2$, $3$) have no common component, because $X$ is smooth. Thus, in ${\rm Pic}\, (X) \simeq \NS\, (X)$, we obtain
\[
\iota_j^*(h_j) + h_j  = \sum_{k \not= j} 2h_k\,\, .
\]
This proves Assertion (1).

Since $\UC(n+1)$ is a free product of $(n+1)$ groups of order $2$, and its geometric representation is faithful, assertion (2) follows from assertion (1)
and the definition of the matrices $M_{N,j}$.
\end{proof}

\begin{proof}[Proof of Assertion (3)]
Let $x_j$ be the standard affine coordinate on ${\bbP}_j^1\setminus\{\infty\}$. Then $X \in \vert -K_{P(n+1)} \vert$ is determined by a polynomial function $f_X(x_1, \ldots, x_{n+1})$ of degree $\le 2$ with respect to each variable $x_j$. 

By the third assertion in Theorem (\ref{fano}), $\Aut\, (X)$ is a finite group. Since $\Aut\, (X)$ preserves ${\rm Amp}\, (X)$, it preserves the set $\{h_j\, \vert\, 1 \le j \le n+1\}$, permuting its elements.Thus, the isomorphism $H^0(\mathcal O_{P(n+1)}(H_j)) \simeq 
H^0(\mathcal O_{X}(h_j))$ implies that $\Aut(X)$ is a subgroup of $\Aut (P(n+1))$:
\[
\Aut\, (X) \subset \Aut\, (P(n+1)) = {\rm PGL}_2(\C)^{n+1} \rtimes {\mathfrak S}_{n+1}\,\, ,
\]
where ${\mathfrak S}_{n+1}$ denotes the group of permutations of the $(n+1)$ factors of $P(n+1)$.

The group ${\rm PGL}_2(\C)^{n+1} \rtimes {\mathfrak S}_{n+1}$ acts on $\vert -K_{P(n+1)} \vert$ and
$\Aut\, (X)$ coincides with the stabilizer of 
the corresponding point $X\in \vert -K_{P(n+1)} \vert$. 
Consider, for a generic $X$, the image $G$ of the morphism 
\[
\Aut\, (X) \to {\rm PGL}_2(\C)^{n+1} \rtimes {\mathfrak S}_{n+1}\to {\mathfrak S}_{n+1}\, .
\]
If $g$ is an element of $G$, and $X$ is generic, there is a lift $\tilde{g}_X\colon X\to X$ which is induced by
an automorphism of $P(n+1)$. This turns out to be  impossible, by considering the actions on 
the inhomogeneous quadratic monomials $x_j^2$ in the equation $f_X$ (for $1 \le j \le n+1$). 

Thus for $X$ generic, $\Aut\, (X) $ coincides with a finite subgroup of  $ {\rm PGL}_2(\C)^{n+1}$. 
Let ${\text{ Id}}_X \neq g$ be an automorphism of $X$; $g$ is induced by an element of  ${\rm PGL}_2(\C)^{n+1}$ of finite order. 
Then {\it up to conjugacy} inside $ {\rm PGL}_2(\C)^{n+1}$, the co-action 
of $g$ can be written as $g^*(x_j) = c_jx_j$ ($1 \le j \le n+1$), where 
$c_j$ are all roots of $1$ and at least one $c_j$, 
say $c_1$, is not $1$. 
By construction the equation $f_{X}(x_j)$ is $g^*$-semi-invariant: $f_X\circ g = \alpha(g) f_X$ for some root of unity $\alpha(g)$. Decompose 
$f_X$ into a linear combination of monomial factors  
\[
x_1^{k_1}x_2^{k_2} \cdots x^{k_{n+1}}\,\, ,\,\, k_j = 0, 1, 2\,\, ;
\]
the possible factors satisfy the eigenvalue relation
\[
c_1^{k_1}c_2^{k_2} \cdots c_{n+1}^{k_{n+1}} = \alpha(g)\, .
\]
Since $c_1 \not= 1$, for each fixed choice of $(k_2, k_3, \ldots, k_{n+1})$,  
at most two of the three monomials
\[
x_1^{2}x_2^{k_2} \cdots x^{k_{n+1}}\, ,\quad x_1x_2^{k_2} \cdots x^{k_{n+1}}\, ,\quad x_2^{k_2} \cdots x^{k_{n+1}}
\]
satisfy the relation above. 
Hence the number of monomial factors in $f_X(x_j)$ is at most $2\cdot3^{n}$. Moreover, given $g$, the possible values
of $\alpha(g)$ are finite (their number is bounded from above by the order of $g$).

Thus, the subset of varieties $X \in \vert -K_{P(n+1)} \vert$ with at least one automorphism $g \not= 1$ belongs to countably many subsets of dimension at most 
\[
2\cdot3^{n} -1+ \dim\, {\rm PGL}_2(\C)^{n+1}= 3^{n+1} + 3(n+1) - 3^n -1
\] 
in $\vert -K_{P(n+1)} \vert$. 
Since  $\dim\vert -K_{P(n+1)}\vert=3^{n+1}-1$, the codimension of these subsets is at least   
\[
3^{n} -3(n+1) \ge 3
\]
for $n \ge 2$. Thus, removing a countable union of subsets of $\vert -K_{P(n+1)} \vert$ of codimension $\ge 3$, the 
remaining generic members $X$ of $\vert -K_{P(n+1)} \vert$ have trivial automorphism group. 
\end{proof}

\begin{proof}[Proof of Assertion (4)] Let 
\[
X \stackrel{\overline{\pi}_j}{\to} \overline{X}_j \stackrel{q_j}{\to} P(n+1)_j
\] 
be the Stein factorization of $\pi_j$. 
Then, $\overline{\pi}_j$ is the small contraction corresponding to 
the codimension $1$ face $F_j := \sum_{k \not= j} \R_{\ge 0} h_k$ of the nef cone $\overline{{\rm Amp}}\, (X)$. Thus, $\rho(X/\overline{X}_j) = 1$ 
with a $\overline{\pi}_j$-ample generator $h_j$. Hence $\overline{\pi}_j$ is a flopping contraction of $X$. Let us describe the flop of $\overline{\pi}_j$. 

By definition of the Stein factorization, $\iota_j$ induces a {\it biregular} automorphism $\overline{\iota}_j$ of $\overline{X}_j$ that satisfies 
$\overline{\iota}_j \circ \overline{\pi}_j = \overline{\pi}_j \circ \iota_j$. 
We set 
$$\overline{\pi}_j^+ := (\overline{\iota}_j)^{-1} \circ \overline{\pi}_j : X \to \overline{X}_j\,\, .$$
Then, $\iota \circ \overline{\pi}_j^+ = \overline{\pi}_j$ 
and $\iota_j^*(h_j) = -h_j + \sum_{k \not= j} 2h_k$ by (1). 
In particular, $\iota_j^*(h_j)$ is $\overline{\pi}_j^+$-anti-ample. 
Hence $\overline{\pi}_j^+ : X \to \overline{X}_j$, or by abuse of language, 
the associated birational transformation $\iota_j$, is the flop of 
$\overline{\pi}_j : X \to \overline{X}_j$. 

Recall that any flopping contraction of a Calabi-Yau manifold is given by a codimension one face of $\overline{{\rm Amp}}\, (X)$ up to automorphisms of $X$ 
(\cite{Ka2}, Theorem (5.7)). Since there is no codimension one face of $\overline{{\rm Amp}}\, (X)$ other than the $F_j$ ($1 \le j \le n+1$), it follows that there is no flop other than $\iota_j$ ($1 \le j \le n+1$) up to $\Aut\,(X)$. On the other hand, by a fundamental result of Kawamata (\cite{Ka3}, Theorem 1), any birational map between minimal models is decomposed into finitely many flops up to automorphisms of the source variety. Thus any $\varphi \in \Bir\, (X)$ is decomposed 
into a finite sequence of flops modulo automorphisms of $X$. 
Hence $\Bir\, (X)$ is generated by $\Aut\,(X)$ and $\iota_j$ ($1 \le j \le n+1$). Since $\Aut\,(X) = \{{\text{Id}}_X\}$ for $X$ generic, assertion (4) is proved. \end{proof}

\begin{remark}\label{minimal} 
In general, the minimal models of a given variety are not unique up to 
isomorphisms. See for instance \cite{LO} for an example on a Calabi-Yau threefold. However, by the proof of Assertion (3), the generic Wehler variety $X$ has no other minimal model  than $X$ itself. 
\end{remark}

%%%%%%%
\subsection{Wehler surfaces}\label{par:WSurf}
%%%%%%%
%%
\subsubsection{Statement}
Assume $n=2$, so that $X$ is now a smooth surface in $P(3)$. Then, $X$ is a projective K3 surface; in particular, $X$ is a minimal surface and each $\iota_k$  is a (biregular) automorphism (see eg. \cite{BHPV}, Page 99, Claim). Thus 
\[
\langle \iota_1, \iota_2, \iota_3 \rangle \subset \Aut(X).
\]
We now prove the following result, which contains a precise formulation of \S~\ref{par:intro-curvesurfaces}.
To state it, we keep the same notations as in Sections \ref{par:WV} and \ref{par:AutBirN}; in particular, $\tau$ is
the embedding of $X$ in $P(3)$ and the $h_j$ are obtained by pull-back of the classes $H_j$.

\begin{theorem}\label{wehler2} Let $X\subset \bbP^1\times \bbP^1\times \bbP^1$ be a smooth Wehler surface.
\begin{enumerate}

\item If $X$ is smooth, then 
$\Psi(\UC(3))\subset \Aut\,(X)$.

\item If $X$ is generic, then $\NS\, (X) = \oplus_{j=1}^{3} {\mathbf Z}h_j$. 

\item If $X$ is generic, then 
$\Aut\,(X) = \langle \iota_1, \iota_2, \iota_3 \rangle = 
\langle \iota_1 \rangle * \langle \iota_2 \rangle * \langle \iota_3 \rangle 
\simeq \UC(3)$.
\end{enumerate}
\end{theorem}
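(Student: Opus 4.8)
The plan is to prove the three assertions of Theorem~\ref{wehler2} in order, with the first being essentially a recollection of already-established facts and the third being the real content. For Assertion~(1), I would simply invoke the discussion of \S\ref{par:wv}: since $X$ is a smooth K3 surface, it is minimal, so each birational involution $\iota_k$ is automatically a biregular automorphism (a birational map between smooth minimal surfaces is an isomorphism). Hence $\Psi(\UC(3))\subset\Aut(X)$. This gives the inclusion; faithfulness of $\Psi$ is guaranteed by Assertion~(2) of Theorem~\ref{wehlercy}, which, as noted in Remark~\ref{smooth}, applies verbatim to the sublattice $\Z h_1\oplus\Z h_2\oplus\Z h_3$ even when $n=2$.

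For Assertion~(2), the task is to show that for generic $X$ the three classes $h_1,h_2,h_3$ span all of $\NS(X)$, i.e.\ $\NS(X)$ has rank exactly $3$. First I would observe that the $h_j$ are linearly independent (they restrict the independent generators $H_j$ of $\NS(P(3))$, and the quadratic form on their span, computed from the intersection numbers $h_i\cdot h_j$, is nondegenerate of signature $(1,2)$ by the $N=3$ case of \S\ref{par:quadform}), so $\rho(X)\geq 3$ always. The reverse inequality is the generic Noether--Lefschetz type statement: a very general member of a sufficiently ample linear system on a surface has Picard number equal to the rank of the image of the ambient Picard group. The clean way to argue is via the infinitesimal/Hodge-theoretic Noether--Lefschetz theorem, or by exhibiting one explicit Wehler surface with $\NS$ of rank $3$ and then using that $\rho$ can only drop (not rise) under generization in a family; semicontinuity of Picard number in families then forces the generic rank to be $3$.

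Assertion~(3) is the main point and where the work lies. The inclusion $\langle\iota_1,\iota_2,\iota_3\rangle\simeq\UC(3)\subset\Aut(X)$ is given by (1) together with the injectivity of $\Psi$ from Theorem~\ref{wehlercy}(2), so the content is the reverse inclusion $\Aut(X)\subset\langle\iota_1,\iota_2,\iota_3\rangle$. My plan is to exploit the global Torelli theorem for K3 surfaces, which identifies $\Aut(X)$ with the subgroup of the orthogonal group $\mathrm{O}(\NS(X))$ preserving the ample cone (and acting compatibly on the transcendental lattice). Using Assertion~(2), $\NS(X)\otimes\R$ is the $3$-dimensional space $V_3$ carrying the form $b_3$ of signature $(1,2)$, and by Theorem~\ref{wehlercy}(1)--(2) the representation of $\langle\iota_j\rangle$ is conjugate to the geometric representation of $\UC(3)$. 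By Wehler's theorem (as recalled in \S\ref{par:intro-curvesurfaces}), for generic $X$ the ample cone of $X$ is exactly one connected component of the positive cone $\{u\cdot u>0\}$, and the fundamental domain $\sfD_3$ for the $\UC(3)$-action is precisely the nef cone. Any automorphism $g$ fixes the ample cone and permutes nothing extra; composing $g$ with a suitable word $w$ in the $\iota_j$ (which acts simply transitively on the tiles of the Tits cone by Theorem~\ref{Tits-Cone}) I can arrange $w g$ to stabilize the fundamental chamber $\sfD_3$, and since $\UC(3)$ acts \emph{simply} transitively on chambers, $wg$ acts trivially on $\NS(X)$. A generic K3 with Picard number $3$ has no nontrivial automorphism acting trivially on $\NS(X)$ (the only such are finite-order symplectic ones, excluded by genericity of the transcendental Hodge structure), so $wg=\mathrm{Id}$ and $g\in\langle\iota_j\rangle$.

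The hard part will be making the chamber-counting argument of Assertion~(3) precise, specifically controlling the interplay between $\mathrm{O}(\NS(X))$ and the geometric representation: I must verify that the ample cone genuinely equals one component of the positive cone (so that there are \emph{no} $(-2)$-curves carving it into smaller pieces, which would enlarge $\Aut(X)$ beyond $\UC(3)$), and that an automorphism fixing $\NS(X)$ pointwise is trivial for generic $X$. The first point is the heart of Wehler's computation and requires checking that the reflections in the $\alpha_i$ (the walls of $\sfD_3$) are exactly realized by the $\iota_j$ and that no further $(-2)$-classes are effective; this is where genericity is genuinely used and is the step most likely to need care.
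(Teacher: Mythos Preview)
Your outline for Assertions~(1) and~(2) matches the paper: minimality of K3 surfaces gives $\iota_j\in\Aut(X)$, and the Noether--Lefschetz theorem gives $\NS(X)=\bigoplus_j\Z h_j$ for generic $X$. The paper also records the intersection matrix $(h_i\cdot h_j)$ explicitly, which you will need.

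For Assertion~(3) there is a genuine gap. You write that once $wg$ stabilizes the fundamental chamber $\sfD_3$, ``since $\UC(3)$ acts simply transitively on chambers, $wg$ acts trivially on $\NS(X)$.'' This inference is invalid: simple transitivity of $\UC(3)$ on chambers only tells you that the stabilizer of $\sfD_3$ \emph{inside $\UC(3)$} is trivial. But $wg$ is an arbitrary isometry of $\NS(X)$ preserving the positive cone, not a priori an element of $\UC(3)$; the stabilizer of $\sfD_3$ in the full isometry group is the symmetric group $\mathfrak{S}_3$ permuting the vertices $h_1,h_2,h_3$. So after your reduction, $wg$ could still induce a nontrivial permutation of the $h_j$. (There is also a slip earlier: you say both that the ample cone is the whole positive cone and that ``$\sfD_3$ is precisely the nef cone.'' For $n=2$ only the first is correct; $\sfD_3$ is a proper simplicial sub-cone, merely a fundamental domain for the $\UC(3)$-action on the positive cone.)

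The paper closes this gap not with Torelli or transcendental-lattice arguments but by the same concrete device used for Theorem~\ref{wehlercy}(3): any isometry permuting the $h_j$ (or, for the kernel, fixing each $h_j$) is induced, via $H^0(\mathcal O_X(h_j))\simeq H^0(\mathcal O_{P(3)}(H_j))$, by an element of $\Aut(P(3))=\mathrm{PGL}_2(\C)^3\rtimes\mathfrak{S}_3$; a monomial-counting argument on the defining equation $f_X$ then shows that for generic $X$ no nontrivial element of $\Aut(P(3))$ preserves $X$. This simultaneously handles the $\mathfrak{S}_3$-stabilizer and the kernel of $\Aut(X)\to\GL(\NS(X))$, without invoking Hodge structures. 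Finally, the step you flag as ``most likely to need care''---the absence of $(-2)$-curves---is in fact a one-line check: the intersection form on $\bigoplus\Z h_j$ does not represent $-2$, so there are no $(-2)$-classes at all and the ample cone equals the positive cone.
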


\begin{remark}
The N\'eron-Severi group may have dimension $\rho(X)>3$; such a jump of $\rho(X)$ can be achieved by arbitrary small 
deformations (see \cite{Og}), and it may leads to very nice examples of ample cones (see \cite{Bar}).
\end{remark}
\subsubsection{Proof of Theorem~\ref{wehler2}}
The three following lemmas prove Theorem~\ref{wehler2}.
In this section, $X$ is a smooth Wehler surface. We start with a description of $\NS(X)$ and of the quadratic form defined by the intersection of divisor classes.

\begin{lemma}\label{lemm61} If $X$ is generic, then  $\NS\, (X) = \oplus_{i=1}^{3} {\mathbf Z} h_i$. 
The matrix $((h_i\cdot h_j)_X)$ of the intersection form on $\NS(X)$ is
\[
((h_i.h_j)_S) = \left(\begin{array}{rrr}
0 & 2 & 2\\
2 & 0 & 2\\
2 & 2 & 0
\end{array} \right).\]
\end{lemma}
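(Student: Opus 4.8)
The plan is to prove two things about a generic smooth Wehler surface $X \subset \bbP^1 \times \bbP^1 \times \bbP^1$: that its Picard number is exactly $3$ (so $\NS(X) = \oplus_i \Z h_i$), and that the intersection matrix is the stated one. I will handle the intersection numbers first, since they are a direct computation and hold for \emph{every} smooth $X$, and then address the genericity statement about the rank.

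\medskip

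For the intersection form, I would compute each $(h_i \cdot h_j)_X$ directly from the geometry of the embedding $\tau \colon X \hookrightarrow P(3)$. Recall $h_i = \tau^* H_i$ where $H_i = (p^i)^* \sO_{\bbP^1}(1)$. By the projection formula, $(h_i \cdot h_j)_X = (H_i \cdot H_j \cdot [X])_{P(3)}$, and since $[X] = -K_{P(3)} = 2H_1 + 2H_2 + 2H_3$, I get
\[
(h_i \cdot h_j)_X = \bigl(H_i \cdot H_j \cdot (2H_1 + 2H_2 + 2H_3)\bigr)_{P(3)}.
\]
In $P(3) = (\bbP^1)^3$ one has $H_k^2 = 0$ for each $k$ and $(H_1 \cdot H_2 \cdot H_3)_{P(3)} = 1$. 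Hence for $i = j$ every term in the triple product contains a repeated factor $H_i^2 = 0$, giving $(h_i \cdot h_i)_X = 0$; and for $i \neq j$ the only surviving contribution comes from the summand $2 H_k$ with $k \notin \{i,j\}$, yielding $(h_i \cdot h_j)_X = 2 (H_1 \cdot H_2 \cdot H_3)_{P(3)} = 2$. This reproduces exactly the claimed matrix, and I expect this part to be entirely routine.

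\medskip

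For the rank statement, the plan is to show that the three classes $h_1, h_2, h_3$ span a rank-$3$ sublattice of $\NS(X)$ and that, for generic $X$, there is nothing more. Linear independence of the $h_i$ is immediate from the intersection matrix above, whose determinant is $16 \neq 0$, so the $h_i$ are always independent and $\NS(X)$ has rank at least $3$. The content is therefore the reverse inequality $\rho(X) \le 3$ for generic $X$. Here I would invoke the standard theory of Noether--Lefschetz loci for the family $\vert -K_{P(3)} \vert$ of K3 surfaces: the locus of $X$ in this linear system whose Picard number exceeds the "expected" value (namely $3$, the rank of the image of $\NS(P(3)) \to \NS(X)$) is a countable union of proper closed subvarieties. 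A very generic member thus has $\NS(X)$ generated precisely by the image of $\NS(P(3))$, i.e. by $h_1, h_2, h_3$, and equals $\oplus_i \Z h_i$ since these classes are already primitive and independent.

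\medskip

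\emph{The hard part} is making the genericity argument rigorous, since "generic" must be pinned down (very general versus generic in the sense of the countable intersection of Zariski-opens), and one must verify that the period map for this family is nonconstant so that the Noether--Lefschetz loci are genuinely proper. Concretely, I would exhibit at least one member $X_0$ with $\rho(X_0) = 3$ to certify that the sublattice $\oplus_i \Z h_i$ is the full N\'eron--Severi group for some point of the family; the existence of such an $X_0$ forces the very general member to have Picard number exactly $3$, since Picard number can only jump up on proper subloci. Producing such an explicit $X_0$, or else quoting the relevant surjectivity/density result for the period map of $(2,2,2)$-surfaces, is where the real work lies; the rest of the lemma is formal.
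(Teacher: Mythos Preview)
Your proposal is correct and follows essentially the same route as the paper: the intersection numbers are computed via the projection formula $(h_i\cdot h_j)_X = (H_i\cdot H_j\cdot 2(H_1+H_2+H_3))_{P(3)}$, and the rank statement is obtained from the Noether--Lefschetz theorem. The paper simply cites the Noether--Lefschetz theorem directly (Voisin, Theorem~3.33), so the ``hard part'' you flag---nonconstancy of the period map, or exhibiting a specific $X_0$ with $\rho(X_0)=3$---is absorbed into that reference; since $-K_{P(3)}$ is very ample, the hypotheses of the standard theorem are met and no further work is needed.
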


\begin{proof} Since $X$ is generic, it follows from the Noether-Lefschetz theorem (\cite{Vo}, Theorem (3.33)) that $\tau^* : \NS\, (P(3)) 
\rightarrow  \NS\, (X)$ is an isomorphism. This proves (1), and (2) follows from $(h_i.h_j)_{S} = (H_i.H_j.2(H_1 + H_2 + H_3))_{P(3)}$. 
\end{proof}

\begin{remark} More generally, if $W$ is a generic element of $\vert -K_V \vert$ of a smooth Fano threefold with very ample anti-canonical divisor $-K_V$, 
then $W$ is a K3 surface and $\NS\, (V) \simeq \NS\, (W)$ under the natural inclusion map. \end{remark}

Thus, the intersection form corresponds to the quadratic form $b_3$ on $V_3$ that is preserved by $\UC(3)$. The fact that $b_3$ has signature
$(1,2)$ is an instance of Hodge index theorem (see Remark \ref{eg:N=3}).

Once we know the intersection form, one can check that there is no effective curve with negative self-intersection on $X$. Indeed, if there were such a curve, one could find an irreducible curve $E\subset X$ with $E\cdot E<0$; the genus formula would imply that $E$ is a smooth rational curve with self-intersection $-2$; but the intersection form does not represent the value $-2$. 

It is known that the ample cone is the set of vectors $u$ in $\NS(X;\R)$ such that $u\cdot u >0$, $u\cdot E >0$ for all effective curves, and $u\cdot u_0>0$ for a given ample class (for example $u_0=\sum h_j$). Since there are no curves with negative self-intersection, one obtains:

\begin{lemma}\label{lemma61bis} If $X$ is generic, the ample cone ${\rm Amp}\, (X)$  coincides with the positive cone 
\[
{\sf{Pos}}(X)=\{u\in \NS(X;\R)\; \vert \; u\cdot u >0 \; {\text{and}} \; u\cdot h_1>0 \}.
\] \end{lemma}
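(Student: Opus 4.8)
The plan is to establish the two inclusions ${\rm Amp}(X)\subseteq {\sf{Pos}}(X)$ and ${\sf{Pos}}(X)\subseteq {\rm Amp}(X)$ separately, taking as black boxes the two facts recorded in the text just above the statement: the description of the ample cone as ${\rm Amp}(X)=\{u\mid u\cdot u>0,\ u\cdot u_0>0,\ u\cdot E>0\ \text{for every effective curve }E\}$ for a fixed ample class (say $u_0=h_1+h_2+h_3$), and the absence on $X$ of any irreducible curve of negative self-intersection. First I would set up the Minkowski geometry: since the intersection form has signature $(1,2)$ (Lemma~\ref{lemm61}), the set $\{u\mid u\cdot u>0\}$ has two connected components, and ${\sf{Pos}}(X)$ is exactly the component $\mathcal C^{+}$ cut out by $u\cdot h_1>0$. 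Indeed $h_1$ is a nonzero effective isotropic class ($h_1\cdot h_1=0$), lying in the closure of the ample component, so by the reverse Cauchy--Schwarz inequality for the form a timelike vector pairs positively with $h_1$ precisely when it lies on the ample side. The inclusion ${\rm Amp}(X)\subseteq {\sf{Pos}}(X)$ is then immediate, since an ample $u$ satisfies $u\cdot u>0$ and $u\cdot h_1>0$ (as $h_1$ is a nonzero effective divisor class).

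The reverse inclusion is where the content lies. Given $u\in{\sf{Pos}}(X)$ I would verify the three defining conditions of the ample cone. The inequality $u\cdot u>0$ is the hypothesis; and since $u$ and $u_0$ both lie in the open component $\mathcal C^{+}$ (one checks $u_0\cdot h_1=4>0$), the light-cone positivity for the form gives $u\cdot u_0>0$. The decisive step is $u\cdot E>0$ for every effective curve $E$: by additivity it suffices to treat $E$ irreducible, and the adjunction formula on the K3 surface $X$ gives $E\cdot E=2p_a(E)-2\ge -2$, so the hypothesis that no curve has negative self-intersection forces $E\cdot E\ge 0$. Hence $E$ is a nonzero class of the closed positive cone, lying in $\overline{\mathcal C^{+}}$ because $E$ is effective and therefore pairs positively with the ample class $u_0$; the reverse Cauchy--Schwarz inequality then yields $u\cdot E>0$. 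This shows $u$ is ample and completes the argument.

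The main obstacle is not a hard estimate but the careful light-cone bookkeeping for the form of signature $(1,2)$: one must confirm that $h_1$, $u_0$, every effective irreducible curve, and every $u\in{\sf{Pos}}(X)$ all sit on the same side $\overline{\mathcal C^{+}}$ of the isotropic cone, so that the reverse Cauchy--Schwarz inequality (the Hodge index theorem in disguise) applies with the correct sign at each use. Once this sign discipline is in place, everything reduces to the two facts already supplied in the text, and the two inclusions close up.
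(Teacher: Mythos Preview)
Your proposal is correct and follows the same approach as the paper: both take as input the characterization of ${\rm Amp}(X)$ and the absence of curves with negative self-intersection (established in the text immediately preceding the lemma), and conclude that the positivity conditions $u\cdot E>0$ are automatic. The paper leaves the Lorentzian light-cone bookkeeping implicit, whereas you spell out the reverse Cauchy--Schwarz argument and the sign discipline explicitly; this is a faithful elaboration rather than a different route.
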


\begin{remark} Even though $X$ is generic and 
$\tau^* : \NS\, (P(3)) \to \NS\, (X)$ is an isomorphism, 
the image of $\tau^* : {\rm Amp}\, (P(3)) \to 
{\rm Amp}\, (X)$ is strictly smaller than ${\rm Amp}\, (X)$, in contrast with the higher dimensional case (see Theorem (\ref{fano})).\footnote{This gives an explicit, negative, answer for a question of Pr. Yoshinori Gongyo to K.~Oguiso.}
\end{remark}

From the previous sections (see Remark~\ref{smooth}), we know that, for all smooth Wehler surfaces,
\begin{itemize}
\item the subspace $N_X := \oplus_{j=1}^{3} \mathbf Z h_j$ is invariant 
under the action of $\langle \iota_1, \iota_2, \iota_3 \rangle$  on $\NS(X)$;
\item the matrix of $(\iota_k^*)_{\vert N_X}$ in the basis $(h_1, h_2, h_3)$ is equal to $M_{3, k}$, where  $M_{3, k}$ is defined in \S~\ref{matrix};
\item there are no non-obvious relations between the three involutions $\iota_k^*$, hence 
\[
\langle \iota_1 \rangle * 
\langle \iota_2 \rangle * \langle \iota_3 \rangle \simeq \UC(3)
\]
\end{itemize}
Moreover, there is a linear isomorphism from the (dual of the) geometric representation $V_3$ to 
$N_X$ which conjugates the action of $\UC(3)$ with the action of $ \langle \iota_1, \iota_2 , \iota_3 \rangle$, and 
maps $\sfD_3$ to the convex cone 
\[
\Delta= \R_+h_1\oplus \R_+h_2\oplus \R_+h_3,
\] 
the quadratic form $b_3$ to the intersection form on $N_X$, and the Tits cone $\sfT_3$ to the positive cone ${\sf{Pos}}(X)$.
Since the ample cone is invariant under the action of $\Aut(X)$ and contains $\R_+h_1\oplus \R_+h_2\oplus \R_+h_3$, 
this gives another proof of Lemma~\ref{lemma61bis}.

\begin{lemma}\label{lem63} If $X$ is generic, then
\begin{enumerate}
\item no element of $\Aut\, (X)\smallsetminus \{{\rm Id}_X\}$ 
is induced by an element of $\Aut\, (P(3))$. 

\item  $\Aut\,(X) = \langle \iota_1, \iota_2 , \iota_3 \rangle$. 
\end{enumerate}
\end{lemma}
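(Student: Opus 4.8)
The plan is to derive both assertions from the action of $\Aut(X)$ on $\NS(X)$, using Assertion~(1) as the essential input for Assertion~(2). For Assertion (1), I would adapt the argument proving the third assertion of Theorem~\ref{wehlercy}. If $f\neq{\rm Id}_X$ is induced by some $g\in\Aut(P(3))={\rm PGL}_2(\C)^3\rtimes\mathfrak S_3$, then $g(X)=X$ and $g\neq{\rm Id}$, since a nontrivial element of $\Aut(P(3))$ cannot fix the hypersurface $X$ pointwise. Writing $g$, up to conjugacy, as a permutation $\sigma\in\mathfrak S_3$ times a diagonal element $\mathrm{diag}(c_1,c_2,c_3)$ with the $c_j$ roots of unity, the semi-invariance $f_X\circ g=\alpha(g)f_X$ is linear in the coefficients of $f_X$. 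The one point needing care is that the global dimension estimate used for $n\geq 3$ is too lossy at $n=2$; instead I would bound, for each fixed conjugacy class of $g$, the dimension of the locus of invariant $X$ by (number of semi-invariant monomials $-1$) plus $\dim(\text{conjugacy class of }g)$, which stays strictly below $\dim|-K_{P(3)}|=26$. Ranging over the countably many finite-order $g$ exhibits the bad locus as a countable union of proper subvarieties, so the generic $X$ carries no nontrivial $P(3)$-induced automorphism.

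For Assertion (2), I would start from $\langle\iota_1,\iota_2,\iota_3\rangle\simeq\UC(3)\subseteq\Aut(X)$ and prove the reverse inclusion. Since $X$ is generic, $\NS(X)=N_X=\oplus_j\Z h_j$ (Lemma~\ref{lemm61}) and ${\rm Amp}(X)={\sf{Pos}}(X)$ (Lemma~\ref{lemma61bis}); hence every $f\in\Aut(X)$ acts on $N_X$ by a lattice isometry preserving the positive cone, giving $\Aut(X)\to{\rm O}^+(N_X)$. Via the picture of \S\ref{par:T3}, where $\UC(3)$ is the index-$6$ congruence subgroup $\Gamma(2)\lhd{\rm PGL}_2(\Z)$, the group ${\rm O}^+(N_X)$ is $\UC(3)\rtimes\mathfrak S_3$, with $\mathfrak S_3$ the permutations of $\{h_1,h_2,h_3\}$. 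Concretely, because $\sfD_3\simeq\Delta=\oplus_j\R_+h_j$ is a fundamental domain for $\UC(3)$ on $\overline{\sfT_3}=\overline{{\sf{Pos}}(X)}$ (Theorem~\ref{Tits-Cone}), any such isometry can be composed with an element of $\UC(3)$ so as to preserve the simplicial cone $\Delta$, and an isometry preserving $\Delta$ merely permutes its extremal (isotropic) rays $\R_+h_j$.

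Thus, given $f\in\Aut(X)$, after replacing $f$ by $\gamma^{-1}f$ for a suitable $\gamma\in\langle\iota_j\rangle$, I may assume $f^*h_j=h_{\sigma(j)}$ for some $\sigma\in\mathfrak S_3$. Now $|h_j|$ defines the projection $\phi_j=p^j|_X\colon X\to\bbP^1_j$, and the three together give the embedding $\tau\colon X\hookrightarrow(\bbP^1)^3$; the relation $f^*h_j=h_{\sigma(j)}$ identifies $|h_{\sigma(j)}|$ with $f^*|h_j|$, so through the induced isomorphisms on $H^0(X,\sO(h_j))$ one gets $\theta_j\in{\rm PGL}_2(\C)$ with $\phi_j\circ f=\theta_j\circ\phi_{\sigma(j)}$. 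Consequently the element $g\in{\rm PGL}_2(\C)^3\rtimes\mathfrak S_3$ defined by $g(z)_j=\theta_j(z_{\sigma(j)})$ satisfies $g\circ\tau=\tau\circ f$, so $g$ preserves $\tau(X)=X$ and induces $f$. By Assertion~(1) this forces $f={\rm Id}_X$, whence the original automorphism lies in $\langle\iota_j\rangle\simeq\UC(3)$, proving $\Aut(X)=\langle\iota_1,\iota_2,\iota_3\rangle$.

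I expect the main obstacle to be the lattice-theoretic step of Assertion~(2): pinning down ${\rm O}^+(N_X)=\UC(3)\rtimes\mathfrak S_3$ and checking that the only isometries stabilizing $\Delta$ are the coordinate permutations, so that the ambiguity of $f^*$ modulo $\UC(3)$ is exactly an element of $\mathfrak S_3$ realized geometrically by a factor-permutation of $(\bbP^1)^3$. Once this identification and the reconstruction of $f$ as an element of $\Aut(P(3))$ are in place, the reduction to Assertion~(1) is purely formal.
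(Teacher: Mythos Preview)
Your proposal is correct and follows essentially the same route as the paper: for (1) adapt the dimension count from the proof of Theorem~\ref{wehlercy}, and for (2) use the fundamental-domain property of $\Delta$ to reduce any $f\in\Aut(X)$ modulo $\langle\iota_j\rangle$ to an automorphism permuting the $h_j$, reconstruct it inside $\Aut(P(3))$ via the linear systems $|h_j|$, and invoke (1). Your observation that the crude codimension bound $3^n-3(n+1)$ from Theorem~\ref{wehlercy} collapses to $0$ at $n=2$ and must be sharpened by counting over conjugacy classes of $g$ is a genuine refinement that the paper elides with the sentence ``the proof is the same as for Theorem~\ref{wehlercy}''.
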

\begin{proof} 
The proof of (1) is the same as for Theorem \ref{wehlercy}. 
Let us prove (2). Since $X$ is generic, $\NS(X)=N_X$. The image $G$ of $\Aut(X)$ in $\GL(\NS(X))$.
contains the group generated by the three involutions $\iota_j^*$; as such it as finite index in the group of
isometries of the lattice $\NS(X)$ with respect to the intersection form (see \S~\ref{par:T3}). Thus, if $G$ is larger than
$\langle \iota_1^*, \iota_2^*, \iota_3^*\rangle$, there exists an element $g^*$ of $G\smallsetminus\{{\rm Id}\}$ that preserves the fundamental 
domain $\Delta$. Such an element permutes the vertices of $\Delta$. As in the proof of Theorem \ref{wehlercy}, one sees
that $g^*$ would be induced by an element of $\Aut(P(3))$, contradicting Assertion (1).
Thus, $G$ coincides with $\langle \iota_1^*, \iota_2^*, \iota_3^*\rangle$. On the other hand, if $f\in \Aut(X)$ acts trivially 
on $\NS(X)$ then, again, $f$ is induced by an element of $\Aut(P(3))$. Thus, Assertion (2) follows from Assertion (1).
\end{proof}

%%%%%%%%%%%%%%%%%%%%%%%%%%%%%%%%%%%%%%%%%%%%%%%%%
%%%%%%%%%%%%%%%%%%%%%%%%%%%%%%%%%%%%%%%%%%%%%%%%%

\section{The movable cone}

%%%%%%%%%%%%%%%%%%%%%%%%%%%%%%%%%%%%%%%%%%%%%%%%%
%%%%%%%%%%%%%%%%%%%%%%%%%%%%%%%%%%%%%%%%%%%%%%%%%

To conclude the proof of Theorem \ref{main1}, we need to describe the movable cone of 
Wehler varieties. This section provides a proof of a more 
explicit result, Theorem~\ref{movable}. 

%%%%%%%%%%%%
\subsection{Statement}\label{par:4.1}
%%%%%%%%%%%%
To state the main result of this section, we implicitly identify the geometric representation $V_{n+1}$ 
of $\UC(n+1)$ to its dual $V_{n+1}^*$; for this, we make use of the duality offered by the non-degenerate quadratic 
form $b_{n+1}$, as in Section~\ref{par:UCN}. This said, let $\rho\colon \UC(n+1)\to \GL(V_{n+1})$ be the
geometric representation of the universal Coxeter group $\UC(n+1)$. Let 
\[
\Psi\colon \UC(n+1)\to \Bir(X)
\]
be the isomorphism that maps the generators $t_i$ of $\UC(n+1)$ to the generators $\iota_i$ of $\Bir(X)$. 

\begin{theorem}\label{movable} Let $n\geq 3$ be an integer, and let $X\subset (\BP^1)^{n+1}$ be
a generic Wehler variety of dimension $n$. 

There is a linear isomorphism $\Phi\colon V_{n+1}\to \NS(X)$ such that 
\begin{enumerate}
\item 
$
\Phi\circ\rho(w)= \Psi(w)^*\circ \Phi
$
for all elements $w$ of $\UC(n+1)$;
\item the fundamental domain $\sfD_{n+1}$ of $\UC(n+1)$ is mapped onto the nef cone $\overline{{\rm Amp}}\,(X)$ by $\Phi$;
\item the Tits cone $\sfT_{n+1}\subset V$ is mapped onto the movable  effective cone ${\mathcal M}^{e}(X)$ by $\Psi$.
\end{enumerate}
In particular, the nef cone is a fundamental domain for the action of $\Bir(X)$ on the movable effective cone.
\end{theorem}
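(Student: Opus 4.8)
The plan is to construct the isomorphism $\Phi$ explicitly and then verify the three assertions, of which the third is the genuine content. We already know from Theorem~\ref{wehlercy} that the action of $\Bir(X)=\Psi(\UC(n+1))$ on $\NS(X)=\oplus_j \Z h_j$ is given, in the basis $(h_k)$, by the matrices $M_{n+1,j}$, whereas the geometric representation $\rho$ sends $t_j$ to $M_{n+1,j}^{t}$. So at the level of matrices the representation on $\NS(X)$ is the dual (transpose-inverse, but here the generators are involutions so transpose) of $\rho$. The clean way to set this up is to use the $b_{n+1}$-duality of \S\ref{par:UCN} to identify $V_{n+1}$ with $V_{n+1}^*$, under which $\rho$ becomes $\rho^*$; then I would simply let $\Phi\colon V_{n+1}\to\NS(X;\R)$ be the linear map sending the basis vector $\alpha_j$ (or rather the dual basis realizing $\sfD_{n+1}$ as $\{f : f(\alpha_i)\ge 0\}$) to $h_j$. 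With this choice assertion~(1), the equivariance $\Phi\circ\rho(w)=\Psi(w)^*\circ\Phi$, is just the matrix identity $M_{n+1,j}=(M_{n+1,j}^{t})^{t}$ recorded in Theorem~\ref{wehlercy}(1), and needs no further work.

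Assertion~(2) is then essentially a matter of matching descriptions of the fundamental cones. By \eqref{eq4} the nef cone $\overline{{\rm Amp}}(X)$ is the closure of $\oplus_j \R_{>0}h_j$, i.e. the simplicial cone spanned by the $h_j$. On the Coxeter side, $\sfD_{n+1}$ is, under the $b_{n+1}$-identification, exactly the set of $w$ with $b_{n+1}(w,\alpha_i)\ge 0$ for all $i$, a cone over a simplex whose facets are the walls $\alpha_i^\perp$. I would check that $\Phi$ carries these walls to the facets $F_j=\sum_{k\ne j}\R_{\ge 0}h_k$ of the nef cone identified in the proof of Theorem~\ref{wehlercy}(4), so that $\Phi(\sfD_{n+1})=\overline{{\rm Amp}}(X)$; the intersection form on $\NS(X)$ corresponds to $b_{n+1}$ under $\Phi$ by the same index-$(1,n-1)$ bookkeeping used for surfaces in Lemma~\ref{lemm61}.

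The substance is assertion~(3): $\Phi(\sfT_{n+1})={\mathcal M}^e(X)$. The natural route is to show both cones are the $\Bir(X)$-orbit of the nef cone and then invoke the Kawamata--Morrison mechanism already assembled in \S\ref{par:AutBirN}. On the Coxeter side $\sfT_{n+1}=\bigcup_{w}\rho(w)(\sfD_{n+1})$ by Theorem~\ref{Tits-Cone}, so by assertions~(1)--(2) its image is $\bigcup_{g\in\Bir(X)}g^*(\overline{{\rm Amp}}(X))$. Thus it suffices to prove that the movable effective cone is precisely this orbit of the nef cone. For the inclusion $\bigcup g^*(\overline{{\rm Amp}}(X))\subseteq{\mathcal M}^e(X)$ one uses that each $g\in\Bir(X)$ is an isomorphism in codimension one (\S\ref{par:wv}), so it carries nef, hence movable effective, classes to movable effective classes, and $\overline{{\rm Amp}}(X)\subseteq{\mathcal M}^e(X)$. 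The reverse inclusion is where the deep input enters: given a movable (effective) class $D$, the theory of \cite{BCHM} and Kawamata's decomposition of birational maps between minimal models into flops (\cite{Ka3}, Theorem~1), exactly as exploited in the proof of Theorem~\ref{wehlercy}(4), shows that some $g\in\Bir(X)$ moves $D$ into the nef cone of $X$; equivalently, the nef chambers $g^*(\overline{{\rm Amp}}(X))$ tile ${\mathcal M}^e(X)$, each corresponding to a marked minimal model, and since $X$ has no minimal model other than $X$ itself (Remark~\ref{minimal}) every chamber is a translate of $\overline{{\rm Amp}}(X)$.

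The hard part will be this last inclusion, namely controlling the movable \emph{effective} cone rather than the full movable cone: one must verify that the union of nef chambers genuinely exhausts ${\mathcal M}^e(X)=\overline{{\mathcal M}}(X)\cap{\mathcal B}^e(X)$ and not merely its interior, and that no boundary behaviour is lost when passing to closures. I expect the cleanest argument to run through the chamber decomposition of the movable cone into nef cones of the marked minimal models, established via \cite{BCHM} and \cite{Ka3}, combined with the finiteness of flopping contractions from the face structure of $\overline{{\rm Amp}}(X)$ found in Theorem~\ref{wehlercy}; the final statement, that $\overline{{\rm Amp}}(X)$ is a fundamental domain for the $\Bir(X)$-action on ${\mathcal M}^e(X)$, is then immediate from assertions~(1)--(3) together with the fact (Theorem~\ref{Tits-Cone}) that $\sfD_{n+1}$ is a fundamental domain for $\UC(n+1)$ acting on $\sfT_{n+1}$.
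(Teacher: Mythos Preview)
Your setup for assertions (1) and (2) matches the paper's, but there is one slip: you claim that ``the intersection form on $\NS(X)$ corresponds to $b_{n+1}$ under $\Phi$''. For $n\ge 3$ there is no natural bilinear form on $\NS(X)$, and the paper explicitly remarks that it knows no geometric interpretation of $b_{n+1}$ on $\NS(X)$. This does not affect (1)--(2), but it matters for how (3) is proved.

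For assertion (3) your route is genuinely different from the paper's, and the gap you flag is real. You propose to invoke BCHM/Kawamata to obtain a chamber decomposition of the movable cone by nef cones of marked minimal models, then use Remark~\ref{minimal} to identify all chambers with $g^*(\overline{{\rm Amp}}(X))$. This handles big movable classes, but the passage to the full ${\mathcal M}^e(X)=\overline{{\mathcal M}}(X)\cap{\mathcal B}^e(X)$, including boundary classes that are not big, is exactly what the general machinery does \emph{not} give you for free; indeed, that exhaustion is essentially the content of the cone conjecture itself, so you would be close to assuming what is to be proved.

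The paper avoids this entirely by an elementary descent argument (its Lemma~\ref{lem2}) that works for \emph{any} effective integral class $D=\sum a_j h_j$, movable or not. Using the linear intersection numbers $(D\cdot h_1\cdots \widehat{h_i}\cdots \widehat{h_j}\cdots h_{n+1})=a_i+a_j\ge 0$ one sees that at most one coefficient $a_i$ is negative; applying $\iota_i^*$ then strictly decreases the integer $s(D)=\sum_j a_j\ge 0$, so after finitely many steps $D$ lands in $\overline{{\rm Amp}}(X)$. Since every $u\in{\mathcal M}^e(X)\subset{\mathcal B}^e(X)$ is a positive combination of effective integral classes, and the Tits cone is convex, this yields ${\mathcal M}^e(X)\subset\Phi(\sfT_{n+1})$ directly, with no appeal to BCHM at this step and no boundary issue. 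Your approach, if the boundary problem could be closed, would be more conceptual but heavier; the paper's is sharper and in fact shows the stronger fact that every effective class lies in $\Phi(\sfT_{n+1})$.
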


\begin{remark}\footnote{This remark provides an explicit negative 
answer to the question asked by Mr Y.~Gongyo (in any dimension $\ge 3$) to K.~Oguiso.}
\label{cone}
Since $\Bir\, (X)$ is much bigger than $\Aut\, (X)$, 
Theorem \ref{movable} implies that the movable effective cone ${\mathcal M}^e\, (X)$, whence the pseudo effective cone $\overline{\mathcal B}\,(X)$, is much bigger than the nef cone $\overline{{\rm Amp}}\,(X)$. On the other hand, for the ambient space $P(n+1)$, we have 
\[
\overline{\mathcal B}\,(P(n+1)) = \overline{\rm Amp}\,(P(n+1)).
\] 
This is a direct consequence of the fact that the intersection numbers 
\[
(v.H_1 \cdots H_{k-1} \cdot H_{k+1} \cdots H_{n+1})_{P(n+1)}\,\, ,\,\, 
1 \le k \le n+1\, ,
\]
are non-negative if $v \in \overline{\mathcal B}\,(P(n+1))$.
So, under the isomorphism $\NS\, (P(n+1)) \simeq \NS\, (X)$, we have
\begin{itemize}
\item  $\overline{{\rm Amp}}\,(P(n+1)) \simeq \overline{{\rm Amp}}\,(X)$ 
(see Theorem~\ref{fano})
\item $\overline{\mathcal B}\,(P(n+1)) \not\simeq \overline{\mathcal B}\,(X)$, 
even if $X$ is generic. 
\end{itemize} 
\end{remark}

In the rest of this section, we prove Theorem~\ref{movable}. Thus, in what follows, $X$ is a generic Wehler 
variety of dimension $n\geq 3$.

%%%%%%%%%%%%
\subsection{Proof}\label{par:proofmovable}
%%%%%%%%%%%%

Let $\Psi$ be the isomorphism described in Section \ref{par:4.1}, and $\Phi$ the linear map which applies the cone $\sfD_{n+1}$ 
onto the nef cone  $\overline{{\rm Amp}}\,(X)$, mapping each vertex $c_j$ to $h_j$. 
With such a choice, Assertions (1) and (2) are part of Section~\ref{par:AutBirN}, and we only  
need to prove Assertion (3), i.e. that the Tits cone $\sfT_{n+1}$
is mapped bijectively onto the movable effective cone ${\mathcal M}^{e}(X)$ by $\Phi$. 

\begin{lemma}\label{lem1} Let $X$ be a generic Wehler variety of dimension $n\geq 3$. Then
\begin{enumerate}
\item $\overline{{\rm Amp}}\,(X) \subset {\mathcal M}^{e}(X)$;

\item $g^*({\rm Amp}\,(X)) \cap {\rm Amp}\,(X) = \emptyset$ for all
$g\neq {\rm{Id}}_X$ in $\Bir(X)$.
\end{enumerate}
\end{lemma}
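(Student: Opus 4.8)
The plan is to prove the two assertions separately, with assertion (1) being essentially formal once the flop structure from Section~\ref{par:AutBirN} is in hand, and assertion (2) carrying the real content.

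For assertion (1), I would argue that every ample class is movable. Since $X$ is a Calabi-Yau manifold with $\NS(X)=\oplus_j\Z h_j$, an ample divisor is in particular big and nef; its complete linear system (after passing to a large multiple) is base-point free, hence has no fixed component, so it is movable. This shows $\mathrm{Amp}(X)\subset \overline{\mathcal M}(X)$, and since ample classes are big they also lie in $\mathcal B^e(X)$; taking closures gives $\overline{\mathrm{Amp}}(X)\subset \overline{\mathcal M}(X)\cap \mathcal B^e(X)={\mathcal M}^e(X)$. The only subtlety is whether I want base-point-freeness (which on a Calabi-Yau follows from the base-point-free theorem applied to nef and big divisors) or merely the definition of movable via codimension of the base locus; either route works.

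For assertion (2), I would use the flop description established in the proof of Assertion (4) of Theorem~\ref{wehlercy}. The key point is that the ample cone $\mathrm{Amp}(X)$ is the interior of the nef cone $\overline{\mathrm{Amp}}(X)$, which under $\Phi$ corresponds to the interior of the fundamental cone $\sfD_{n+1}$. Since $\sfD_{n+1}$ is a fundamental domain for the action of $\UC(n+1)$ on the Tits cone $\sfT_{n+1}$ (Theorem~\ref{Tits-Cone}), one has $\rho(w)(\sfD_{n+1}^\circ)\cap \sfD_{n+1}^\circ=\emptyset$ for every $w\neq \mathrm{Id}$ in $\UC(n+1)$. Transporting this through the equivariant isomorphism $\Phi$ of Assertion~(1) of Theorem~\ref{movable}, and using that $\Psi\colon\UC(n+1)\to\Bir(X)$ is an isomorphism, I obtain $g^*(\mathrm{Amp}(X))\cap\mathrm{Amp}(X)=\emptyset$ for every $g=\Psi(w)$ with $w\neq\mathrm{Id}$, i.e. for every $g\neq\mathrm{Id}_X$ in $\Bir(X)$. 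Here I am implicitly using that $\Phi$ carries the interior of $\sfD_{n+1}$ to the interior of $\overline{\mathrm{Amp}}(X)$, which is $\mathrm{Amp}(X)$.

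The main obstacle will be assertion~(2), and specifically justifying that the fundamental-domain property of $\sfD_{n+1}$ for the \emph{geometric} action of $\UC(n+1)$ transfers cleanly to the disjointness of the ample cone under $\Bir(X)$. The cleanest argument invokes Theorem~\ref{Tits-Cone} directly: $\sfD_{n+1}$ is a fundamental domain means precisely that distinct $\UC(n+1)$-translates of its interior are disjoint, and the equivariance $\Phi\circ\rho(w)=\Psi(w)^*\circ\Phi$ from Assertion~(1) converts this statement verbatim into the desired disjointness for $\Bir(X)$. I would be careful to note that Theorem~\ref{Tits-Cone} gives disjointness of the interiors of the chambers $\rho^*(w)(\sfD)$, which is exactly what the definition of ``fundamental domain'' in Conjecture~\ref{morrison-kawamata} requires, so no further geometric input about the cone $\sfT_{n+1}$ is needed for this lemma — that input will only be required later, when one shows that the translates actually \emph{cover} the movable effective cone.
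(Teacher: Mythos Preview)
Your proof of assertion~(1) has a small slip: from $\mathrm{Amp}(X)\subset \mathcal B^e(X)$ one only gets $\overline{\mathrm{Amp}}(X)\subset \overline{\mathcal B^e(X)}=\overline{\mathcal B}(X)$ by taking closures, not $\overline{\mathrm{Amp}}(X)\subset \mathcal B^e(X)$. The paper avoids this by using directly that $\overline{\mathrm{Amp}}(X)$ is the simplicial cone on the $h_j$, and each $h_j$ is free, hence movable and effective; convexity of $\mathcal M^e(X)$ then gives the inclusion immediately. Your base-point-free-theorem route is fine in spirit, but you should finish it the same way.

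For assertion~(2) your argument is correct but takes a genuinely different route from the paper. You transport the fundamental-domain property of $\sfD_{n+1}$ (Theorem~\ref{Tits-Cone}) through the equivariant linear isomorphism $\Phi$ and the group isomorphism $\Psi\colon \UC(n+1)\to \Bir(X)$, both of which are already established in Section~\ref{par:AutBirN} before this lemma, so there is no circularity. The paper instead invokes a general algebro-geometric fact (Kawamata, \cite{Ka2}, Lemma~1.5): if $g\in\Bir(X)$ satisfies $g^*(\mathrm{Amp}(X))\cap\mathrm{Amp}(X)\neq\emptyset$ then $g\in\Aut(X)$; combined with $\Aut(X)=\{\mathrm{Id}_X\}$ this gives the conclusion in one line. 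Your approach is more self-contained, using only the Coxeter machinery already assembled in the paper, but it relies on the full computation $\Bir(X)\simeq\UC(n+1)$ (Theorem~\ref{wehlercy}(4)), which in turn rests on Kawamata's decomposition into flops. The paper's approach is more conceptual and would apply verbatim to any Calabi-Yau with trivial automorphism group, without needing to identify $\Bir(X)$ explicitly.
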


\begin{proof} Since the divisor classes $h_j$ ($1 \le j \le n+1$) are free, they are movable; assertion  
(1) follows from the fact that the nef cone is generated, as a convex cone, by these classes. If $g \in \Bir\, (X)$ satisfies $g^*({\rm Amp}\,(X)) \cap {\rm Amp}\,(X) \not= \emptyset$, then $g \in \Aut\, (X)$ (see \cite{Ka2}, Lemma~1.5). Thus, Assertion (2) follows from $\Aut\, (X) = \{  {\text{Id}}_X\}$. 
\end{proof}

Since the movable effective cone is $\Bir(X)$-invariant, the orbit of $\overline{{\rm Amp}}\,(X)$  is contained in $ {\mathcal M}^{e}(X)$. 
Hence, 
\[
\Phi(\sfT_{n+1})\subset  {\mathcal M}^{e}(X)
\]
and we want to show the reverse inclusion (note that the closures of these two sets are equal).

\begin{lemma}\label{lem2} Let $X$ be a generic Wehler variety of dimension $n\geq 3$.
For any given effective integral divisor class $D$, there is a birational transformation
$g$ of $X$ such that $g^*(D) \in \overline{{\rm Amp}}\,(X)$. That is, $D$ is contained in $\Phi(\sfT_{N+1})$.
\end{lemma}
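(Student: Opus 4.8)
The plan is to produce the desired $g$ by an explicit descent, in the same spirit as the proof of Proposition~\ref{pro:rat-pts1}. Recall that $\Phi$ sends $c_j$ to $h_j$, so $\overline{{\rm Amp}}\,(X)=\Phi(\sfD_{n+1})$ is the cone $\oplus_j \R_{\ge 0}h_j$ of classes with non-negative coordinates, and, since $\sfT_{n+1}=\bigcup_w \rho(w)(\sfD_{n+1})$ while $\Phi$ intertwines $\rho$ with the $\Bir(X)$-action, one has $\Phi(\sfT_{n+1})=\Bir(X)\cdot \overline{{\rm Amp}}\,(X)$. Thus it suffices to find $g\in\Bir(X)$ with $g^*(D)\in\oplus_j\R_{\ge 0}h_j$; then $D$ lies in the $\Bir(X)$-orbit of the nef cone, i.e. $D\in\Phi(\sfT_{n+1})$.

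Write $D=\sum_{k}d_k h_k$ with $d_k\in\Z$. By Assertion~(1) of Theorem~\ref{wehlercy} one has $\iota_j^*(h_j)=-h_j+\sum_{k\ne j}2h_k$ and $\iota_j^*(h_k)=h_k$ for $k\ne j$, so under $\iota_j^*$ the coordinates transform by $d_j\mapsto -d_j$ and $d_k\mapsto d_k+2d_j$ ($k\ne j$). I would introduce the linear functional $\sigma(D):=\sum_k d_k$ and record the key identity
\[
\sigma(\iota_j^*D)=\sigma(D)+2(n-1)\,d_j ,
\]
which uses that there are exactly $n$ indices $k\ne j$. The algorithm is then immediate: as long as $D$ has a negative coordinate $d_j<0$, replace $D$ by $\iota_j^*(D)$; since $n\ge 3$, each such step strictly decreases $\sigma$ (by at least $2(n-1)\ge 4$, the $d_j$ being integers). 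When the algorithm stops, all coordinates are $\ge 0$, i.e. the current class lies in $\overline{{\rm Amp}}\,(X)$.

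The heart of the matter is to guarantee termination, that is, that $\sigma$ is bounded below along the orbit. Here I would use that $\Bir(X)$ preserves effective classes (every element is an isomorphism in codimension one, cf. \S\ref{par:wv}), so every class produced by the algorithm is again effective and integral; hence it is enough to show $\sigma(E)\ge 0$ for every effective class $E$. For this, set $A=\sum_j h_j$, which is ample by \eqref{eq4}, and pair $E$ with $A^{n-1}$: writing $E=\sum_i a_i E_i$ with $a_i>0$ and $E_i$ prime divisors, the number $A^{n-1}\cdot E_i=(A|_{E_i})^{n-1}$ is the degree of an ample restriction, hence positive, so $A^{n-1}\cdot E\ge 0$. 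On the other hand the top intersection numbers $(h_{i_1}\cdots h_{i_n})_X=(H_{i_1}\cdots H_{i_n}\cdot 2\sum_k H_k)_{P(n+1)}$ are symmetric under permutation of the indices, and $A$ is permutation-invariant, so $h_j\cdot A^{n-1}=m$ is a constant independent of $j$, with $m>0$ since $h_j\ne 0$ is nef and $A$ is ample. Therefore $A^{n-1}\cdot E=m\,\sigma(E)$, which forces $\sigma(E)\ge 0$.

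Putting these together, the successive values of $\sigma$ form a strictly decreasing sequence of non-negative integers, so the algorithm halts after finitely many steps, at which point $g^*(D)\in\overline{{\rm Amp}}\,(X)$ for the corresponding product $g$ of the involutions $\iota_j$. This shows $D\in\Phi(\sfT_{n+1})$, as required. The only genuine obstacle is the termination step, which is precisely what the $\mathfrak S_{n+1}$-symmetry of the intersection form and the positivity of $A^{n-1}\cdot E$ supply; everything else is the bookkeeping of how $\iota_j^*$ acts on the coordinates $d_k$.
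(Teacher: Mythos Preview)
Your proof is correct and follows essentially the same descent as the paper: decrease $\sigma(D)=\sum_k d_k$ by applying $\iota_j^*$ whenever $d_j<0$, and terminate because $\sigma\ge 0$ on effective classes and $\Bir(X)$ preserves effectivity. The only variation is in how you establish $\sigma\ge 0$: the paper pairs $D$ with products $h_{k_1}\cdots h_{k_{n-1}}$ of $n-1$ distinct $h_k$'s to get the sharper inequalities $d_i+d_j\ge 0$ (hence at most one negative coordinate, and $\sigma\ge 0$), whereas you pair with the symmetric ample class $A^{n-1}$ and use the $\mathfrak S_{n+1}$-symmetry to read off $\sigma$ directly --- both are routine positivity arguments and lead to the same conclusion.
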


The proof is similar to the proof of Proposition~\ref{pro:rat-pts1}. Here, one makes use of the intersection form
and positivity properties of effective divisor classes, instead of the quadratic form $b_N$. 

\begin{remark}
We do not know any geometric interpretation of the quadratic form $b_{n+1}$ on $\NS(X)$ for $n\geq 3$.
\end{remark}

\begin{proof} 
Define $D_1 := D$. In $\NS\, (X)$, we can write 
\[
D_1 = \sum_{j=1}^{n+1} a_j(D_1)h_j\,\, ,
\]
where the coefficients $a_{j}(D_1)$ are integers. Put 
\[
s(D_1) := \sum_{j=1}^{n+1} a_{j}(D_1)\,\, .
\]
By definition, $s(D_1)$ is an integer. 
Since $D_1$ is an effective divisor class and the classes $h_i$ are nef, it follows that 
\[
a_{n}(D_1) + a_{n+1}(D_1) = (D\cdot h_1\cdot h_2 \cdots h_{n-1})_{X} \geq 0\, .
\]
For the same reason, 
$a_i(D_1) + a_j(D_1) \geq 0$ for all $1 \leq i \not= j \le n+1$. 
Hence there is {\it at most} one $i$ such that $a_i(D_1) <0$.

Moreover the sum $s(D_1)$ is non-negative. Indeed, there is at most one negative term,
 say $a_1(D_1)$, in the sum defining $s(D_1)$;
 since
\[
s(D_1) = (a_1(D_1) + a_2(D_1)) + a_3(D_1) + 
\cdots + a_{n+1}(D_1) \ge 0\,\, 
\]
and $a_1(D) + a_2(D_1) \geq 0$, this shows that $s(D_1)$ is non-negative.
 
If $D_1 \in \overline{{\rm Amp}}\, (X)$, then we can take $g = 1$. So, we may assume that $D_1$ is not in $\overline{{\rm Amp}}\, (X)$, which means that there is a unique index $i$ with $a_{i}(D_1) < 0$. Then, consider the new divisor class $D_2 := \iota_i^*(D_1)$; it is effective. By definition 
of $a_j(\cdot)$ and Theorem~\ref{wehlercy} (first assertion), we have
\begin{eqnarray*}
 D_2 & = & \sum_{j=1}^{n+1} a_j(D_2)h_j  \\
 & = & -a_i(D_1)h_i + \sum_{j \not= i} (a_j(D_1) + 2a_i(D_1))h_j\,\, .
\end{eqnarray*}
Computing $s(D_2) = \sum  a_j(D_2) $, the inequality  $a_i(D_1) < 0$ provides
\begin{eqnarray*}
s(D_2) 
& = & -a_i(D_1) + \sum_{j \not= i} (a_j(D_1) + 2a_i(D_1)) \\
& = & s(D_1) + 
(2n-1)a_i(D_1) \\
& < &  s(D_1)\,\, .
\end{eqnarray*}
If $a_j(D_2) \ge 0$ for all $j$, then $D_2 \in \overline{{\rm Amp}}\, (X)$, 
and we are done. Otherwise, there is $j$ such that $a_j(D_2) < 0$. Consider 
then the divisor class $D_3 := \iota_j^*(D_2)$. As above, 
$D_3$ is an effective 
divisor class such that $s(D_3) < s(D_2)$.

We repeat this process: At each step, the sum $s(\cdot)$ decreases by at least one unit. 
Since $s(\cdot)$ is a positive integer for all effective divisors, the process stops, and provides
an effective divisor $D_k$ in the $\Bir(X)$-orbit of $D_1$ such that all coefficients $a_i(D_k)$
are non-negative, which means that $D_k$ is an element of $\overline{{\rm Amp}}\, (X)$.
\end{proof}

%The previous lemma shows that the intersection ${\mathcal M}^e(X)\cap \NS(X;{\mathbf Q})$
%is contained in the image of the Tits cone by $\Phi$. Since the set of rational points is dense, 
%one gets 
%\[
%\Phi(\sfT_{n+1})\subset {\mathcal M}^e(X) \subset {\overline{  {\mathcal M}^e(X)   }} = {\overline{ \Phi(\sfT_{n+1})}}
%\]
Let $u$ be an element of the movable cone ${\mathcal M}^e(X)$. As ${\mathcal M}^e(X)$ is the intersection of ${\overline{  {\mathcal M}}} (X)   $
and ${\mathcal B}^e\, (X)$, we can write 
\[
u=\sum_{i=1}^{l+1} r_i D_i
\]
where each $D_i$ is an effective divisor class and all $r_i$ are positive real numbers. 
The Tits cone $\sfT_{N+1}$ is a convex set (see \S~\ref{par:titsconegene}) and the previous lemma shows that each $D_i$ is in the image
of $\sfT_{N+1}$; since $u$ is a convex combination of the $D_i$, $u$ is an element of $\Phi(\sfT_{N+1})$. Consequently, we obtain 
\[
\Phi(\sfT_{n+1})={\mathcal M}^e(X),
\]
and Theorem~\ref{movable} is proved. 

\subsection{Complement} The following theorem follows from our description of the movable cone as (the image of) the Tits
cone $\sfT_{N+1}$ and the description of rational boundary points of $\sfT_{N+1}$ obtained in Section~\ref{par:Cox-ratio-points}. As far as we know, this kind of statement is not predicted by the original cone conjecture.

\begin{theorem}
Let $n \ge 3$ be an integer. Let $X$ be a generic hypersurface of multi-degree $(2, \ldots, 2)$ in $({\bbP^1})^{n+1}$.
Let $D$ be rational boundary point of the movable cone $\overline{{\mathcal M}}\, (M)$. Then there exists a pseudo-automorphism  $f$ of $X$ such that $f^*(D)$ is in the nef cone $\overline{{\rm Amp}}\, (M)$. Hence, rational boundary points of $\overline{{\mathcal M}}\, (M)$ are movable. \end{theorem}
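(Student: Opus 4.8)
The plan is to transport the combinatorial statement about rational boundary points of the Tits cone, already proved in Proposition~\ref{pro:rat-pts1}, across the equivariant linear isomorphism $\Phi\colon V_{n+1}\to \NS(X)$ furnished by Theorem~\ref{movable}. Recall that $\Phi$ conjugates the geometric representation $\rho$ of $\UC(n+1)$ with the action of $\Bir(X)$ on $\NS(X)$, sends the fundamental domain $\sfD_{n+1}$ to the nef cone $\overline{{\rm Amp}}\,(X)$, sends each vertex $c_j$ to $h_j$, and sends the Tits cone $\sfT_{n+1}$ onto the movable effective cone ${\mathcal M}^e(X)$. Since the closure of $\Phi(\sfT_{n+1})$ is $\overline{{\mathcal M}}\,(X)$, a boundary point of $\overline{{\mathcal M}}\,(X)$ corresponds under $\Phi^{-1}$ to a boundary point of $\overline{\BP(\sfT_{n+1})}$ (after projectivizing), and a \emph{rational} boundary point corresponds to a rational one because $\Phi$ is defined over $\Q$ (it maps the integral basis $(c_j)$ to the integral basis $(h_j)$).

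First I would fix a rational boundary point $D$ of $\overline{{\mathcal M}}\,(X)$ and set $v:=\Phi^{-1}(D)\in V_{n+1}$; this $v$ is a rational vector lying on the boundary of the Tits cone $\sfT_{n+1}$ (more precisely, $[v]$ is a rational boundary point of $\BP(\sfT_{n+1})$). By Proposition~\ref{pro:rat-pts1}, there is an element $w\in\UC(n+1)$ with $\rho(w^{-1})(v)\in\sfD_{n+1}$; equivalently, writing $f:=\Psi(w)$, the equivariance relation $\Phi\circ\rho(w^{-1})=\Psi(w^{-1})^*\circ\Phi=(f^{-1})^*\circ\Phi$ gives $(f^{-1})^*(D)=\Phi(\rho(w^{-1})(v))\in\Phi(\sfD_{n+1})=\overline{{\rm Amp}}\,(X)$. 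Renaming $f^{-1}$ as the desired pseudo-automorphism (every element of $\Bir(X)$ is a pseudo-automorphism since $X$ is Calabi--Yau, as recalled in \S\ref{par:wv}), we obtain $f^*(D)\in\overline{{\rm Amp}}\,(X)$, which is the first assertion.

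For the final clause, that rational boundary points of $\overline{{\mathcal M}}\,(X)$ are themselves movable, I would argue that the nef cone consists of movable classes and that movability is preserved under the $\Bir(X)$-action. Concretely, $f^*(D)$ lies in the nef cone $\overline{{\rm Amp}}\,(X)=\Phi(\sfD_{n+1})$, and the nef cone is the $\R_{\ge 0}$-span of the free classes $h_j$, each of which is movable (as in Lemma~\ref{lem1}(1)); hence $f^*(D)$ is movable. Since $f$ is a pseudo-automorphism and pull-back by a pseudo-automorphism sends movable classes to movable classes (again \S\ref{par:wv}), applying $(f^{-1})^*=(f^*)^{-1}$ to $f^*(D)$ shows that $D$ itself is movable.

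The main obstacle, and the only point demanding care, is the identification of boundary points under $\Phi$ together with the rationality bookkeeping: one must check that a boundary point of the movable \emph{effective} cone ${\mathcal M}^e(X)$ that happens to lie on the boundary of $\overline{{\mathcal M}}\,(X)$ really does pull back to a boundary point of the Tits cone to which Proposition~\ref{pro:rat-pts1} applies, rather than to an interior point of $\sfT_{n+1}$ lying on some lower stratum. Since $\Phi$ is a $\Q$-linear isomorphism of cones carrying $\overline{\sfT_{n+1}}$ onto $\overline{{\mathcal M}}\,(X)$, it carries boundary to boundary and rational to rational, so this is routine; the substantive combinatorial content has already been isolated in Proposition~\ref{pro:rat-pts1}, and the present theorem is essentially its geometric translation.
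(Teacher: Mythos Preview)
Your proposal is correct and follows essentially the same approach as the paper: the paper's proof is a one-sentence reference to Theorem~\ref{movable} together with the description of rational boundary points of $\sfT_{N+1}$ in Section~\ref{par:Cox-ratio-points}, and you have faithfully unpacked this by transporting the problem along $\Phi$, invoking Proposition~\ref{pro:rat-pts1}, and using the equivariance $\Phi\circ\rho(w)=\Psi(w)^*\circ\Phi$. Your closing paragraph correctly isolates the one bookkeeping point (that $\Phi$, being a $\Q$-linear isomorphism taking $\sfT_{n+1}$ onto ${\mathcal M}^e(X)$, carries closures to closures, boundaries to boundaries, and rational points to rational points), which is indeed routine.
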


%%%%%%%%%%%%%%%%%%%%%%%%%%%%%%%%%%%%%%%%%%%%%%%%%%%%
\section{Universal cover of Hilbert schemes of Enriques surfaces}\label{par:main2}
%%%%%%%%%%%%%%%%%%%%%%%%%%%%%%%%%%%%%%%%%%%%%%%%%%%
In this section, we shall prove Theorem~\ref{main2} and a few refinements. 
%%%
\subsection{Enriques surfaces (see eg. \cite{BHPV}, Chapter VIII)} 
%%%
An Enriques surface $S$ is a compact complex surface 
whose universal cover  $\tilde{S}$ is a K3 surface. The Enriques surfaces 
form a $10$-dimensional family; all of them are
projective and their fundamental group is isomorphic to $\Z/2\Z$. 

Let $S$ be an Enriques surface. The free part of the N\'eron-Severi group $\NS_f\, (S)$ 
is isomorphic to the lattice $U \oplus E_8(-1)$, were (cf. \cite{BHPV}, Chapter I, Section 2)
\begin{itemize}
\item $U$ is the unique even unimodular lattice of signature $(1,1)$ ;
\item  $E_8(-1)$ is the unique even unimodular {\it negative} definite lattice of rank~$8$. 
 \end{itemize}
From now on, we identify the lattices 
$\NS_f\, (S)$ and $U \oplus E_8(-1)$. We denote the group of isometries 
of $\NS_f\, (S)$ preserving the positive cone by 
${\rm O}^{+}_{10}$. Here, by definition, the positive cone ${\sf Pos}(S)$ is the connected component of $\{\, x \in \NS\, (S)_\R\, \vert\, (x^2) > 0\}$, containing the ample cone. We define
\[
{\rm O}^{+}_{10}[2] = \{ \varphi \in {\rm O}^{+}_{10}\, \vert\, \varphi = \; {\rm{Id}} \mod(2)\}  \, . 
\]

\begin{theorem}[see \cite{BP}, Theorem 3.4, Proposition 2.8]\label{enriques} 
Let $S$ be a generic Enriques surface. Then $S$ does not contain any smooth rational curve and 
the morphism $g\mapsto g^*\in \GL(\NS_f(S))$ provides an isomorphism
\[
\Aut\, (S) \simeq {\rm O}^{+}_{10}[2].
\]
\end{theorem}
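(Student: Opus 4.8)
The plan is to lift everything to the K3 double cover and apply the strong Torelli theorem for K3 surfaces, thereby reducing the determination of $\Aut(S)$ to lattice theory on $H^2(\tilde S,\Z)$. Let $\pi\colon \tilde S\to S$ be the universal covering, which is a K3 surface, and let $\epsilon$ be the fixed-point-free Enriques involution with $S=\tilde S/\langle\epsilon\rangle$. The involution $\epsilon^*$ splits $H^2(\tilde S,\Z)$, up to finite index, into its $\pm 1$-eigenlattices $H^+\oplus H^-$, with $H^+\cong U(2)\oplus E_8(-2)$ and $H^-\cong U\oplus U(2)\oplus E_8(-2)$. The pullback $\pi^*$ identifies $\NS_f(S)=L:=U\oplus E_8(-1)$ isometrically with $H^+$ after scaling the form by $2$, so that $H^+\cong L(2)$, and the period satisfies $\epsilon^*\omega_{\tilde S}=-\omega_{\tilde S}$, hence $\omega_{\tilde S}\in H^-\otimes\C$. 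I would take as the working meaning of \emph{generic} that the period point is Hodge-generic in the Enriques period domain, i.e. $\NS(\tilde S)=H^+$, $T(\tilde S)=H^-$, and the only Hodge isometries of $H^-$ are $\pm\mathrm{id}$.

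The statement about rational curves is then immediate. Every vector of $H^+\cong U(2)\oplus E_8(-2)$ has self-intersection divisible by $4$, so $\NS(\tilde S)=H^+$ contains no $(-2)$-class and $\tilde S$ carries no smooth rational curve. A smooth rational curve $C\subset S$ would satisfy $C^2=-2$ and pull back to a pair of disjoint $(-2)$-curves on $\tilde S$ exchanged by $\epsilon$ (an \'etale double cover of $\bbP^1$ being trivial), which is impossible; equivalently these would be $\epsilon^*$-antiinvariant classes in $\NS(\tilde S)=H^+$, contradicting $\epsilon^*=\mathrm{id}$ there. In particular the nef cone of $S$ equals the positive cone ${\sf Pos}(S)$, so $\Aut(S)$ acts on $\NS_f(S)$ preserving ${\sf Pos}(S)$ and the homomorphism $\Aut(S)\to {\rm O}^{+}_{10}$ is well defined.

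For the isomorphism I would proceed in three steps. \emph{Injectivity:} an $f$ with $f^*=\mathrm{id}$ lifts to $\tilde f\in\Aut(\tilde S)$ commuting with $\epsilon$ (a lift conjugates $\epsilon$ to a nontrivial deck transformation, hence to $\epsilon$); then $\tilde f^*=\mathrm{id}$ on $H^+$ and $\tilde f^*|_{H^-}=\pm\mathrm{id}$ by genericity, so $\tilde f^*\in\{\mathrm{id},\epsilon^*\}$ and, by Torelli, $\tilde f\in\{\mathrm{id},\epsilon\}$, giving $f=\mathrm{id}$. \emph{Image inside ${\rm O}^{+}_{10}[2]$:} for any $f$, the lift $\tilde f^*$ preserves both eigenlattices, and since $\tilde f^*|_{H^-}$ is a Hodge isometry of $T(\tilde S)=H^-$ it equals $\pm\mathrm{id}$, so it acts trivially on the discriminant group $A_{H^-}\cong(\Z/2)^{10}$. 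As $H^2(\tilde S,\Z)$ is the unimodular overlattice of $H^+\oplus H^-$ given by an anti-isometry $\gamma\colon A_{H^+}\xrightarrow{\sim}A_{H^-}$, compatibility of $\tilde f^*$ with $\gamma$ forces the induced action on $A_{H^+}\cong L/2L$ to be trivial, i.e. $f^*\equiv\mathrm{id}\pmod 2$ and $f^*\in{\rm O}^{+}_{10}[2]$.

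\emph{Surjectivity} is the converse gluing. Given $\phi\in{\rm O}^{+}_{10}[2]$, it acts trivially on $A_{H^+}=L/2L$, so $\phi$ together with $\mathrm{id}_{H^-}$ is $\gamma$-compatible and extends to an isometry $\tilde\phi$ of $H^2(\tilde S,\Z)$; this $\tilde\phi$ fixes $\omega_{\tilde S}$, preserves the positive cone, and maps the ample cone to itself because, there being no $(-2)$-curves, the two coincide. The strong Torelli theorem yields a unique $\tilde g\in\Aut(\tilde S)$ with $\tilde g^*=\tilde\phi$; since $\tilde g^*$ commutes with $\epsilon^*$, Torelli gives $\tilde g\epsilon=\epsilon\tilde g$, so $\tilde g$ descends to $g\in\Aut(S)$ with $g^*=\phi$. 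The main obstacle is the lattice bookkeeping underlying the last two steps: computing $H^\pm$ and their discriminant forms and showing that the extendable subgroup of ${\rm O}(H^+)={\rm O}(L)$ is exactly the level-$2$ congruence subgroup, which is the precise origin of the ``$\bmod\,2$'' in ${\rm O}^{+}_{10}[2]$. The other delicate point is faithfulness, i.e. the absence of cohomologically trivial automorphisms, and it is exactly here that the genericity hypothesis is indispensable.
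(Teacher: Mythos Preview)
The paper does not prove this theorem at all: it is quoted as a result of Barth and Peters \cite{BP}, with no argument given beyond the citation. So there is no ``paper's own proof'' to compare your proposal against.

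That said, your outline is essentially the standard proof, and it is the one Barth and Peters give: lift to the K3 cover, split $H^2(\tilde S,\Z)$ into the $\epsilon^*$-eigenlattices $H^+\cong L(2)$ and $H^-$, interpret ``generic'' as Hodge-generic so that $\NS(\tilde S)=H^+$ and the only Hodge isometries of $T(\tilde S)=H^-$ are $\pm\mathrm{id}$, and then run strong Torelli together with Nikulin-style gluing over the discriminant groups to identify the image of $\Aut(S)$ in ${\rm O}(L)$ with the level-$2$ congruence subgroup. The three steps (injectivity, containment in ${\rm O}^+_{10}[2]$, surjectivity) are exactly the right decomposition, and your identification of the ``$\bmod\,2$'' condition with triviality on $A_{H^+}\cong L/2L$ is the heart of the matter.

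One small wording issue, not a real gap: in the rational-curve paragraph, the clause ``equivalently these would be $\epsilon^*$-antiinvariant classes in $\NS(\tilde S)=H^+$'' is garbled. The individual classes $[C_1],[C_2]$ are neither invariant nor antiinvariant (they are swapped by $\epsilon^*$), and $H^+$ is by definition the \emph{invariant} sublattice. The clean argument is the one you already gave first: $[C_1]$ is a $(-2)$-class in $\NS(\tilde S)=H^+\cong U(2)\oplus E_8(-2)$, which is impossible since every square in that lattice is divisible by $4$. Drop the ``equivalently'' sentence and the paragraph is fine.
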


Moreover, ${\rm O}^{+}_{10}$ is isomorphic to the Coxeter group associated to the Coxeter diagram ${\rm T}_{2,3,7}$, a tree with $10$ vertices and three branches of length $2$, $3$, and $7$ respectively (see \cite{CoDo}).  
From now on, $S$ is a generic Enriques surface. 

%%%
\subsection{Hilbert schemes and positive entropy}
%%%

The group $\Aut(S)$ acts by automorphisms on the Hilbert scheme ${\rm Hilb}^{n}(S)$; this defines a morphism 
\[
\rho_n\colon \Aut(S)\to \Aut({\rm Hilb}^{n}(S)).
\]
 Denote by ${\mathcal{H}}^n(S)$ the universal cover $\widetilde{{\rm Hilb}^{n}(S)}$, and by ${\rm A}_n(S)$ the group of all automorphisms of ${\mathcal{H}}^n(S)$ which are obtained by lifting elements of $\rho_n(\Aut(S))$. By construction, there is an exact sequence 
 \[
 1\to \Z/2\Z \to {\rm A}_n(S) \to \rho_n(\Aut(S))\to 1.
 \]
Up to conjugacy, the group ${\rm O}^{+}_{10}[2]$ is a lattice in the Lie group ${\rm O}_{1,9}(\R)$; as such, it contains a non-abelian free group (we shall describe an explicit free subgroup below). Since all elements in the kernel of $\rho_n$ have finite order, the group ${\rm A}_n(S)$ is commensurable to a lattice in the Lie group ${\rm O}_{1,9}(\R)$ and contains a non-abelian free group. Since all holomorphic vector fields on ${\mathcal{H}}^n(S)$ vanish identically, one can find such a free group that acts faithfully on the N\'eron-Severi group of ${\mathcal{H}}^n(S)$. 

\begin{lemma}\label{lem:free}
Let $M$ be a complex projective manifold. Let $G$ be a subgroup of $\Aut(M)$ such that (i) $G$ is a non-abelian free group and (ii) $G$ acts faithfully on $\NS(M)$. Then there exists an element $g$ in $G$ such that $g^*\colon \NS(M)\to \NS(M)$ has an eigenvalue $\lambda$ with $\vert \lambda \vert >1$. 
In particular, the topological entropy of $g$ is strictly positive.
\end{lemma}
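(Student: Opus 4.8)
The plan is to reduce the statement to a purely group-theoretic dichotomy for subgroups of $\GL_d(\Z)$, and then to invoke Gromov--Yomdin for the entropy assertion. Write $d=\dim_\R \NS(M)_\R$, and note that $\Aut(M)$ acts on the lattice $\NS(M)\cong\Z^d$ by lattice automorphisms, so the image $G^*=\{g^*: g\in G\}$ lies in $\GL(\NS(M))\cong\GL_d(\Z)$; by hypothesis the map $g\mapsto g^*$ is injective, hence $G^*$ is itself a non-abelian free group. The key elementary observation is that for any finite-dimensional linear operator the spectral radius is attained by some eigenvalue; therefore producing an eigenvalue of modulus $>1$ for some $g^*$ is exactly the same as finding $g\in G$ whose action on $\NS(M)$ has spectral radius strictly larger than $1$. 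In particular no Perron--Frobenius or cone input is needed for this equivalence.

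First I would argue by contradiction and suppose that every $g^*$ has spectral radius $\le 1$. Since $g^*$ is an integral matrix with $\det g^*=\pm1$, the product of the moduli of its eigenvalues equals $1$; combined with the assumption that each modulus is $\le 1$, this forces every eigenvalue of $g^*$ to have modulus exactly $1$. These eigenvalues are algebraic integers (roots of the integral characteristic polynomial) all of whose Galois conjugates lie on the unit circle, so Kronecker's theorem shows that each of them is a root of unity. Thus every element of $G^*$ is quasi-unipotent.

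Next I would show that a subgroup of $\GL_d(\Z)$ all of whose elements are quasi-unipotent is virtually solvable. The semisimple part (in the Jordan decomposition) of a quasi-unipotent integral matrix has finite order; hence, projecting $G^*$ onto the semisimple part $\mathbf S$ of the Zariski closure $\overline{G^*}$, every element of $G^*$ maps to a finite-order element, so the image of $G^*$ in $\mathbf S(\R)$ is a finitely generated torsion linear group. By the Burnside--Schur theorem such a group is finite, so $\mathbf S$ is finite, the identity component of $\overline{G^*}$ is solvable, and $G^*$ is virtually solvable; one may equally well package this step as an application of the Tits alternative. But a non-abelian free group is never virtually solvable, since every finite-index subgroup is again free of rank $\ge 2$ by Nielsen--Schreier. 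The faithfulness of $g\mapsto g^*$ therefore yields a contradiction, and we conclude that some $g\in G$ has spectral radius $>1$ on $\NS(M)$, i.e. an eigenvalue $\lambda$ with $|\lambda|>1$.

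Finally, for the entropy statement I would quote the theorem of Gromov and Yomdin: for an automorphism $g$ of a compact K\"ahler manifold one has $h_{\mathrm{top}}(g)=\log r\bigl(g^*\vert_{H^*(M,\R)}\bigr)$, where $r(\cdot)$ denotes the spectral radius. Because $\NS(M)_\R\subset H^2(M,\R)\subset H^*(M,\R)$ is $g^*$-invariant, an eigenvalue $\lambda$ of $g^*\vert_{\NS(M)}$ with $|\lambda|>1$ forces $r\bigl(g^*\vert_{H^*}\bigr)\ge|\lambda|>1$, whence $h_{\mathrm{top}}(g)>0$. The main obstacle is the middle paragraph: correctly passing from ``every element is quasi-unipotent'' to ``virtually solvable.'' This is where integrality of the representation is indispensable --- it is precisely Kronecker's theorem that upgrades ``modulus $1$'' to ``root of unity'' --- and where I would rely on the Tits alternative together with the Burnside--Schur finiteness theorem, rather than attempt a bare-hands ping-pong argument with the two free generators.
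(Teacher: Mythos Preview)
Your approach is exactly the paper's: embed $G$ into $\GL_d(\Z)$ via the faithful action on $\NS(M)$, use Kronecker to upgrade ``all eigenvalues of modulus one'' to ``all eigenvalues roots of unity'', deduce virtual solvability and hence a contradiction with non-abelian freeness, and conclude with Yomdin's inequality for the entropy. The paper's own proof is a two-sentence sketch of precisely this line of argument.

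The one point that needs repair is your justification of the step ``every element quasi-unipotent $\Rightarrow$ virtually solvable''. The projection of $G^*$ to the reductive (or semisimple) quotient $\mathbf S=\overline{G^*}/R_u$ of its Zariski closure does \emph{not} send every quasi-unipotent $g$ to a torsion element: the unipotent Jordan factor $g_u$ need not lie in the unipotent radical, so its image may be a non-trivial unipotent element of $\mathbf S$, which has infinite order in characteristic~$0$. Thus Burnside--Schur does not apply as stated, and invoking the Tits alternative here is circular (one is trying to show that a non-abelian free group cannot consist entirely of quasi-unipotent elements). A correct fix, implicit in the paper's claim that a finite-index subgroup is conjugate into the strict upper-triangular matrices, uses integrality more sharply: the roots of unity occurring as eigenvalues of elements of $\GL_d(\Z)$ have order bounded by some $N=N(d)$, so the Zariski-closed condition $(g^N-I)^d=0$ holds on all of $G^*$ and hence on $H=\overline{G^*}$; applied to a maximal torus $T\subset H^0$ this forces $t^N=1$ for every $t\in T$, whence $T=\{1\}$, $H^0$ is unipotent, and $G^*$ is virtually nilpotent. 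With this correction your argument is complete and matches the paper.
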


\begin{proof}
The first assertion follows from the fact that $G$ is a subgroup of $\GL(\NS(X))$, hence of $\GL_m(\Z)$, where $m$ is the Picard number of $M$: If all eigenvalues of all elements $g^*$ had modulus one, they would be roots of $1$, and a finite index subgroup of $G$ would be solvable (conjugate to 
a subgroup of $\GL_m(\R)$ made of upper triangular matrices with coefficients $1$ on the diagonal). The second follows from Yomdin's lower bound for
the topological entropy.\footnote{An element $g \in \Aut\,(M)$ is of positive entropy if and only if the spectral radius of the action of $g$ on $H^{1,1}(M,\R)$ is strictly bigger than $1$. The  entropy $h_{top}(g)$  is equal to the spectral radius of $g^*$ on $\oplus_p H^{p,p}(M,\R)$. (see \cite{Gro})}
\end{proof}

Thus ${\mathcal{H}}^n(S)$ has many automorphisms with positive entropy. In the next sections, we make this statement more precise and explicit, by constructing an embedding of $\UC(3)$ into ${\rm A}_n(S)$.

%%%
\subsection{Three involutions and positive entropy}
%%%

Let $e_1$, $e_2$ be a standard basis of $U$, that is 
$U = \ Ze_1\oplus \Z e_2 $ and 
\[ (e_1^2)_S = (e_2^2)_S = 0, \quad (e_1\cdot e_2)_S = 1.
\]
 We can, and do choose $e_1$ and $e_2$  in the closure of the 
positive cone ${\sf Pos}(S)$. 

Let $v \in E_8(-1)$ be an element such that 
$(v^2)_S = -2$. Put $e_3 := e_1 + e_2 + v$. Then, we have 
\[
(e_3^2)_S = 0\,\, ,\,\, (e_3\cdot e_1)_S = (e_3\cdot  e_2)_S = 1\, .
\]
So, $e_2$ and $e_3$ are also in the closure of the positive cone 
and the three sublattices 
\[
U_3 = \Z e_1\oplus \Z e_2 \, , \; U_2 =\Z e_1\oplus \Z e_3 \, , \;  U_1 = \Z e_2\oplus \Z e_3 
\]
of $\NS_f\, (S)$ are isomorphic to $U$. 

Let $j$ be one of the indices $1,2,3$. The sublattice $U_j$ determines  an orthogonal decomposition  
$\NS_f(S) = U_j \oplus U_j^{\perp}$.
Consider the  isometry of $\NS_f(S)$ defined by
\[
\iota_j^{*} = {\rm id}_{U_j} \oplus -{\rm id}_{U_j^{\perp}}
\]
Then, $\iota_j^{*}$ is an  element of  ${\rm O}^{+}_{10}[2]$, and  is induced by a unique automorphism $\iota_j$ of $S$, and $\iota_j$ is an involution (apply Theorem~\ref{enriques}). 
Let $\langle \iota_1 , \iota_2 , \iota_3\rangle$ be the subgroup of $\Aut\, (S)$ generated by $\iota_1$, $\iota_2$ 
and $\iota_3$. 

\begin{theorem}\label{enriquesautom} Let $S$ be a generic Enriques surface. There are no non-trivial relations between the three
linear transformations $\iota^*_k$: The groups
$\langle \iota_1 , \iota_2 , \iota_3\rangle$, $ \langle \iota_1^* \rangle * 
\langle \iota_2^* \rangle * \langle \iota_3^* \rangle $ and  $\UC(3)$ are isomorphic.
Moreover, the maximal eigenvalue of $\iota_1^*\iota_2^*\iota_3^*$ 
on $\NS\,(S)$ is equal to $9+4\sqrt{5}$. In particular, the topological entropy of the automorphism $\iota_3\circ\iota_2\circ\iota_1$ 
is  positive.   
\end{theorem}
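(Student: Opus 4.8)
The plan is to reduce the whole statement to a computation on the three–dimensional subspace
$W := \Z e_1 \oplus \Z e_2 \oplus \Z e_3 \subset \NS_f(S)$, supplemented by a soft argument controlling the action on its orthogonal complement. First I would record the Gram matrix of $(e_1,e_2,e_3)$: from $(e_i^2)_S = 0$ and $(e_i\cdot e_j)_S = 1$ for $i\neq j$ it is exactly the matrix of the form $b_3$ of Example~\ref{eg:N=3} (zeros on the diagonal, ones elsewhere), of determinant $2$. Hence $W$ is non-degenerate; since $e_1+e_2+e_3$ has positive square it has signature $(1,2)$, and because $\NS_f(S)$ has signature $(1,9)$ we obtain an orthogonal splitting $\NS_f(S)\otimes\R = W \perp W^{\perp}$ in which $W^{\perp}$ is negative definite of rank $7$. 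Each $\iota_j^{*}$ is an isometry, and the explicit formulas below show it preserves $W$, hence also $W^{\perp}$.

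Next I would compute $\iota_j^{*}|_W$ in the basis $(e_1,e_2,e_3)$. For $\iota_3^{*}$, which fixes $U_3=\langle e_1,e_2\rangle$ pointwise and negates $U_3^{\perp}=E_8(-1)$, one has $\iota_3^{*}(e_3)=\iota_3^{*}(e_1+e_2+v)=e_1+e_2-v=2e_1+2e_2-e_3$, so $\iota_3^{*}|_W = M_{3,3}$. Projecting orthogonally onto the unimodular hyperbolic plane $U_j$ (whose dual basis is obtained by swapping its two generators) gives the analogous formula for $j=1,2$, so $\iota_j^{*}|_W = M_{3,j}$ for all $j$. Thus restriction to $W$ realizes $t_j\mapsto M_{3,j}$, the dual of the geometric representation of $\UC(3)$, which is faithful by Theorems~\ref{coxeter} and~\ref{universalcoxeter}. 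Combined with the injectivity of $g\mapsto g^{*}$ on $\Aut(S)$ (Theorem~\ref{enriques}), this forces the canonical surjection $\UC(3)\to\langle\iota_1,\iota_2,\iota_3\rangle$ to be an isomorphism, which is the first assertion.

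For the eigenvalue statement I would compute the characteristic polynomial of $M_{3,1}M_{3,2}M_{3,3}$, which represents $(\iota_3\circ\iota_2\circ\iota_1)^{*}=\iota_1^{*}\iota_2^{*}\iota_3^{*}$ on $W$. A direct multiplication yields trace $17$ and determinant $-1$, and the characteristic polynomial factors as $(\lambda+1)(\lambda^2-18\lambda+1)$, with roots $-1$ and $9\pm 4\sqrt5$. On $W^{\perp}$, which is negative definite, the group $\langle\iota_1^{*},\iota_2^{*},\iota_3^{*}\rangle$ acts through the compact group ${\rm O}(W^{\perp})$, so every eigenvalue of $\iota_1^{*}\iota_2^{*}\iota_3^{*}$ on $W^{\perp}$ has modulus $1$. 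Therefore the spectral radius on all of $\NS(S)\otimes\R$ equals $9+4\sqrt5$, giving the second assertion.

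Finally, for the entropy statement I would invoke the Gromov--Yomdin theorem recalled in the footnote: an Enriques surface has $b_1=0$ and $h^{2,0}=0$, so $H^{1,1}(S,\R)=\NS_f(S)\otimes\R$ and the topological entropy of $\iota_3\circ\iota_2\circ\iota_1$ is the logarithm of the spectral radius of its action on $H^2$, namely $\log(9+4\sqrt5)>0$. The step I expect to be the crux is precisely the passage from the $3\times 3$ computation to the full $10$-dimensional lattice: everything rests on $W^{\perp}$ being negative definite, which prevents any eigenvalue hidden in the $E_8(-1)$-directions from competing with $9+4\sqrt5$. Once this signature bookkeeping is in place, the remainder is routine matrix arithmetic.
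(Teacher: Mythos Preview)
Your argument is correct and follows essentially the same route as the paper: restrict to the rank-three sublattice $W=\Z e_1\oplus\Z e_2\oplus\Z e_3$, check that $\iota_j^{*}|_W=M_{3,j}$, and then invoke the faithfulness of the geometric representation of $\UC(3)$ together with the eigenvalue computation for $M_{3,1}M_{3,2}M_{3,3}$. Your explicit signature argument showing that $W^{\perp}$ is negative definite, so that no eigenvalue on $W^{\perp}$ can exceed~$1$, is a point the paper leaves implicit; it is exactly what is needed to pass from the spectral radius on $W$ to that on all of $\NS_f(S)$.
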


\begin{proof} 
Consider the sub-lattice of $\NS(S)$ defined by  
\[
L = \Z  e_1\oplus \Z e_2\oplus \Z v  = \Z  e_1\oplus \Z e_2\oplus \Z e_3.
\]
By construction $\iota_1^*(e_2) = e_2$ and $\iota_1^*(e_3) = e_3$. Since $2e_2+v$ is orthogonal to both  $e_2$ and $e_3$, it follows that 
$2e_2 + v \in U_1^{\perp}$. Thus $\iota_1^*(2e_2 + v) = -(2e_2 + v)$ and $\iota_1^*(v) = -4e_2 - v$. One deduces easily that
$\iota_1^*(e_1) = -e_1 + 2e_2 + 2e_3$.
Permuting the indices, we obtain a similar formula for $\iota^*_2$ and $\iota^*_3$. It follows that
the lattice  $L$ is $\langle \iota_1 , \iota_2 , \iota_3\rangle$-invariant, and  
 the matrices of the involutions in the basis $(e_1, e_2, e_3)$ are
$$\iota^*_{3 \vert L} = M_{3,3}\,\, ,\,\, \iota^*_{2 \vert L} = M_{3,2},\, ,\,\, \iota^*_{1 \vert L} = M_{3,1}\,\, ,$$
($M_{3,3}$, $M_{3,2}$ and $M_{3,1}$ are the matrices introduced in  
Section \ref{matrix}). 

The remaining assertions follow from the fact that the maximal eigenvalues of 
the product matrix $M_{3,1}M_{3,2}M_{3,3}$ is $9 + 4\sqrt{5} > 1$. 
\end{proof}

The following remark, which has been kindly communicated to us by professor Shigeru Mukai, provides a geometric explanation of the previous statement, which is
related to Wehler surfaces. 
\begin{remark}\label{classical} 
The class $h = e_1 + e_2 + e_3$ determines an ample line bundle of degree $6$ 
and the projective model of $S$ associated to $\vert h \vert$ is a sextic surface in $\bbP^3$ singular along the $6$-lines of a tetrahedron. Then the universal cover $\tilde{S}$ has a projective model of degree $12$: It is a quadratic section of the Segre manifold $P(3) = \bbP^1 \times \bbP^1 \times \bbP^1 \subset \bbP^{7}$; so, $\tilde{S}$ is a 
K3 surface of multi-degree 
$(2,2,2)$ in $P(3)$, i.e., a Wehler surface.

Equivalently, let $\pi : \tilde{S} \to S$ be the universal cover of $S$. The classes $\pi^*e_i$  define three different elliptic fibrations $\varphi_i : \tilde{S} \to {\bbP}^1$ with no reducible fiber. Hence $\varphi_1 \times \varphi_2 \times \varphi_3$  embeds  $\tilde{S}$ into 
$P(3)$ and the image is a surface of multi-degree $(2,2,2)$. 
 
Then, one easily shows that the action of $\UC(3)$ on $S$ is covered by the natural action of $\UC(3)$ on the Wehler surface $\tilde{S}$ (Theorem~\ref{wehler2}-(2) in \S 6).
\end{remark}

%%%
\subsection{Proof of Theorem~\ref{main2}}
%%%
 We have a natural biregular action of the group $\UC(3) \simeq \langle \iota_1, \iota_2, \iota_3 \rangle$ on the Hilbert scheme 
${\rm Hilb}^{n}(S)$, induced by the action on $S$. Each $\iota_j \in \Aut\, ({\rm Hilb}^{n}(S))$ lifts equivariantly to a biregular action $\tilde{\iota}_j \in \Aut\, ({\mathcal{H}}^n(S))$ on the universal cover ${\mathcal{H}}^n(S)=\widetilde{{\rm Hilb}^{n}(S)}$. 
It suffices to show that each $\tilde{\iota}_j$ is an involution (we will then have natural surjective  homomorphisms
$\UC(3) \to \langle \tilde{\iota}_1, \tilde{\iota}_2, \tilde{\iota}_3 \rangle \to \langle \iota_1, \iota_2, \iota_3 \rangle \simeq \UC(3)$,
hence all the arrows will be isomorphic)

\begin{lemma}[see \cite{MN}]
Let $s$ be a holomorphic involution of ${\rm Hilb}^{n}(S)$. Then, all lifts $\tilde{s}$ of $s$ to the universal cover ${\mathcal{H}}^n(S)$
are involutions too. 
\end{lemma}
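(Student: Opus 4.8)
The plan is to reduce the statement to a fixed-point computation and then apply the holomorphic Lefschetz fixed-point theorem on the Calabi--Yau manifold ${\mathcal{H}}^n(S)$. Write $\pi\colon {\mathcal{H}}^n(S)\to {\rm Hilb}^n(S)$ for the universal cover; its deck group is $\pi_1({\rm Hilb}^n(S))\simeq \Z/2\Z$, generated by a fixed-point-free holomorphic involution $\sigma$. The involution $s$ has exactly two lifts, $\tilde s$ and $\sigma\tilde s$. First I would record the elementary group theory. Both $\tilde s\sigma$ and $\sigma\tilde s$ are lifts of $s$, hence they coincide, so $\sigma$ commutes with $\tilde s$; and since $\pi\circ\tilde s^2=s^2\circ\pi=\pi$, the automorphism $\tilde s^2$ is a deck transformation, so $\tilde s^2\in\{{\rm Id},\sigma\}$. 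Because $(\sigma\tilde s)^2=\sigma^2\tilde s^2=\tilde s^2$, the two lifts have the same square; thus it suffices to show that the square of one lift, say $\tilde s$, is not $\sigma$, and the entire lemma comes down to excluding the case $\tilde s^2=\sigma$.

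Assume, for contradiction, that $\tilde s^2 = \sigma$, so that $\tilde s$ has order $4$. The next step computes the action on the holomorphic volume form. Since ${\mathcal{H}}^n(S)$ is Calabi--Yau of dimension $2n$, we have $H^0({\mathcal{H}}^n(S), \Omega^{2n}) = \C\,\omega$ for a nowhere-vanishing $2n$-form $\omega$, and $H^q({\mathcal{H}}^n(S),\sO)=0$ for $0<q<2n$. The canonical bundle of ${\rm Hilb}^n(S)$ is a \emph{nontrivial} $2$-torsion line bundle (it is inherited from the order-two canonical bundle of the Enriques surface $S$, which has $p_g=0$), so $H^0({\rm Hilb}^n(S), \Omega^{2n}) = 0$. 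As $\sigma$ acts freely and $\sigma$-invariant forms on ${\mathcal{H}}^n(S)$ descend to the quotient ${\rm Hilb}^n(S)$, this forces the $\sigma$-invariant part of $\C\omega$ to vanish, whence $\sigma^*\omega = -\omega$. Writing $\tilde s^*\omega = c\,\omega$, the relation $\tilde s^2 = \sigma$ yields $c^2 = -1$, i.e. $c = \pm i$ is a primitive fourth root of unity.

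Now I would observe that $\tilde s$ acts \emph{freely}: any fixed point of $\tilde s$ would be a fixed point of $\tilde s^2 = \sigma$, and $\sigma$ has none. On the other hand, the holomorphic Lefschetz fixed-point theorem of Atiyah--Bott, applied to the fixed-point-free map $\tilde s$ on the compact manifold ${\mathcal{H}}^n(S)$, gives
\[
0 \;=\; \sum_{q=0}^{2n}(-1)^q \operatorname{tr}\!\bigl(\tilde s^*\mid H^q({\mathcal{H}}^n(S), \sO)\bigr),
\]
the right-hand side being an empty localization sum. By the vanishing of the intermediate cohomology, only $q=0$ and $q=2n$ contribute: the $q=0$ term equals $1$, while Serre duality identifies $H^{2n}({\mathcal{H}}^n(S),\sO)$ with the dual of $\C\omega$, on which $\tilde s^*$ acts by $c^{-1}=\pm i$. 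Hence the alternating sum equals $1+(-1)^{2n}c^{-1}=1\pm i$, a non-real complex number, which is nonzero. This contradicts the displayed identity, so $\tilde s^2=\sigma$ is impossible. Therefore $\tilde s^2={\rm Id}$, and both lifts of $s$ are involutions.

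I expect the main obstacle to lie in the two inputs feeding the contradiction: pinning down the sign $\sigma^*\omega=-\omega$ rigorously (this rests precisely on the nontriviality of the $2$-torsion canonical bundle of ${\rm Hilb}^n(S)$, equivalently on $p_g(S)=0$) and invoking the holomorphic Lefschetz formula in the degenerate, fixed-point-free regime where its right-hand side is zero. Everything else---the commutation of $\sigma$ with $\tilde s$, the dichotomy $\tilde s^2\in\{{\rm Id},\sigma\}$, and the fact that a non-real eigenvalue $c=\pm i$ cannot cancel the constant term $1$---is routine.
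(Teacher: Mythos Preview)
Your argument is correct, and its architecture matches the paper's: reduce to excluding $\tilde{s}^2=\sigma$, observe that $\langle\tilde{s}\rangle\simeq\Z/4\Z$ would then act freely, and extract a numerical contradiction from the cohomology of $\sO_{{\mathcal H}^n(S)}$. The difference lies in how that contradiction is produced. You compute the eigenvalue $c=\pm i$ of $\tilde{s}^*$ on $H^0(\Omega^{2n})$ (this step genuinely requires knowing that $K_{{\rm Hilb}^n(S)}$ is nontrivial $2$-torsion) and then invoke the Atiyah--Bott holomorphic Lefschetz formula to obtain $0=L(\tilde{s})=1+\bar{c}=1\mp i$. The paper instead uses only the scalar input $\chi(\sO_{{\mathcal H}^n(S)})=2$, which follows immediately from the Calabi--Yau condition in even dimension, and notes that a free $\Z/4\Z$-quotient would have $\chi(\sO)=2/4=1/2$, not an integer. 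The paper's route is shorter and avoids pinning down the sign $\sigma^*\omega=-\omega$ or appealing to Serre/Hodge duality; your route is more explicit about the representation on the volume form and would generalize to situations where one knows the eigenvalue on $\omega$ but perhaps not the full $\chi(\sO)$. Both are valid, and they are two faces of the same equivariant index computation.
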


\begin{proof}
Let $\pi \colon {\mathcal{H}}^n(S) \to {\rm Hilb}^{n}(S)$ be the universal covering map and $\sigma$ be the covering involution. Let $\tilde{s}$ 
be a lift of $s$, {\sl{i.e.}} an automorphism of ${\mathcal{H}}^n(S)$ with $\pi\circ \tilde{s} = s\circ \pi$.
Then $(\tilde{s})^2$ is either the identity map or $\sigma$. Assume $(\tilde{s})^2 = \sigma$ and follow \cite{MN}, Lemma (1.2), to derive a contradiction. By assumption, $\langle \tilde{s} \rangle$ is a cyclic group of order $4$ and acts freely on ${\mathcal{H}}^n(S)$ because so does $\sigma$. 
By cite{OS}, Theorem (3.1), 
Lemma (3.2), the Euler characteristic of the sheaf ${\mathcal{O}}_{\mathcal{H}}^n(S)$ is equal to $2$.  Since the group $\langle \tilde{s} \rangle$  has order $4$ and acts freely, the quotient ${\mathcal{H}}^n(S) /\langle \tilde{\iota}_j \rangle$ is smooth and 
\[
\chi \left(\mathcal O_{{\mathcal{H}}^n(S)/\langle \tilde{s}\rangle}\right) = \frac{1}{4}\chi  \left(\mathcal O_{{\mathcal{H}}^n(S)}\right) = \frac{1}{2}\, ,
\]
a contradiction because this characteristic should be an integer.  \end{proof}

This completes the proof of Theorem~\ref{main2}~: one gets an embedding of $\UC(3)$ in $\Aut({\mathcal{H}}^n(S))$, and its image contains automorphisms with positive entropy (either by Theorem~\ref{enriquesautom} or Lemma~\ref{lem:free}).

\subsection{A question} There are Enriques surfaces with
$\vert \Aut\, (S) \vert < \infty$; on the other hand, Kondo shows that $\vert \Aut\, (\tilde{S}) \vert = \infty$ for {\it every} K3 surface $\tilde{S}$ which is the universal cover of an Enriques surface (see \cite{Kn} for these facts). Thus, it is natural to ask the following question.
\begin{question}\label{infinite}
$\vert \Aut\, (\widetilde{{\rm Hilb}^{n}(S)}) \vert = \infty$ for {\it every} Enriques surface?
\end{question}

 {\bf Acknowledgement.} The main idea of this work was found by the second author after fruitful discussions with Professors Klaus Hulek and Matthias Schu\"ett and with Doctor Arthur Prendergast-Smith, during his stay at Hannover in May 2011. K. Oguiso would like to express his best thanks to all of them for discussions and to Professors K. Hulek and M. Schu\"ett for invitation. We are also very grateful to Professors H. Esnault, Akira Fujiki, Yoshinori Gongyo, Shu Kawaguchi, J\'anos Koll\'ar, James McKernan and Shigeru Mukai for many valuable comments 
relevant to this work.


\begin{thebibliography}{BHPV}

\bibitem[Bar]{Bar} Baragar, A. : {\em The ample cone for a {$K3$} surface}, Canad. J. Math. {\bf 63:3} (2011) 481--499. 

\bibitem[Bea]{Bea} Beardon, A. F. : {The geometry of discrete groups},
Graduate Texts in Mathematics
{\bf 91} (1995) xii+337, Springer-Verlag

\bibitem[B]{B} Benoist, Y. : {\em Automorphismes des c\^ones convexes}, Invent. Math. {\bf 142:1} (2000) 149--193. 

\bibitem[BCHM]{BCHM} Birkar, C., Cascini, P., Hacon, C. D.; McKernan, J. : {\em Existence of minimal models for varieties of log general type}, J. Amer. Math. Soc. {\bf 23} (2010) 405--468. 

\bibitem[BHPV]{BHPV} Barth, W., Hulek, K., Peters, C., Van de Ven, : {\em
Compact complex surfaces}. Second enlarged edition. Springer
Verlag, Berlin-Heidelberg, 2004.

\bibitem[BP]{BP} Barth, W., Peters, C. :{\em Automorphisms of Enriques 
surfaces}, Invent. Math. {\bf 73} (1983) 383--411.

\bibitem[Bo]{Bo} Borcea, C. : {\em Homogeneous vector bundles and families of Calabi-Yau threefolds. II.}, Several complex variables and complex geometry, Part 2 (Santa Cruz, CA, 1989), 83--91, Proc. Sympos. Pure Math. {\bf 52} Part 2, Amer. Math. Soc., Providence, RI, 1991.

\bibitem[Bor]{Bor} Borcherds, R.E.: {\em  Coxeter groups, Lorentzian lattices, and $K3$ surfaces}, Internat. Math. Res. Notices 1998 {\bf 19} (1998) 
1011--1031. 
	

\bibitem[Ca]{Ca} Cantat, S. : {\em Dynamique des automorphismes des surfaces $K3$}, 
Acta Math.
{\bf 187:1} (2001) 1--57. 


\bibitem[CaDo]{CaDo} Cantat, S., Dolgachev, I.  : {\em Rational surfaces with a large group of automorphisms}, 
J. of the A.M.S.
{\bf 25:1} (2012) 863--905. 



\bibitem[CaOg]{CaOg} Cantat, S. : {\em Birational automorphism groups and the movable cone theorem for Calabi-Yau manifolds of Wehler type via universal Coxeter groups}, preprint,  http://perso.univ-rennes1.fr/serge.cantat/publications.html
. 



\bibitem[CoDo]{CoDo} Cossec, F. R., Dolgachev, I. V. : {{Enriques surfaces. {I}}},
    Progress in Mathematics
    {\bf 76} (1989) x+397, Birkh\"auser Boston Inc.
   

\bibitem[DS]{DS} Dinh, T.-C., Sibony, N. : {\em Super-potentials for currents on compact K\"ahler manifolds and dynamics of automorphisms}, 
J. Algebraic Geom. 
{\bf 19} (2010) 473--529. 

\bibitem[Do]{Do} Dolgachev, I. R. : {\em Reflection groups in algebraic geometry}, Bull. Amer. Math. Soc. (N.S.) {\bf 45} (2008) 1--60.

\bibitem[Fr]{Fr} Fryers, M. J. :{\em The movable fan of the Horrocks-Mumford quintic}, arXiv:math/0102055. 

\bibitem[GM]{GM} Grassi, A., Morrison, D. R. : {\em Automorphisms and the K\"ahler cone of certain Calabi-Yau manifolds}, Duke Math. J. {\bf 71} (1993) 
831--838.


\bibitem[Gro]{Gro} Gromov, M. : {\em On the entropy of holomorphic maps}, 
Enseign. Math. (2)
{\bf 49} (2003) 217--235

\bibitem[Hum]{Hum} Humphreys, J. E. : {\em Reflection groups and Coxeter 
groups}, Cambridge Studies in Advanced Mathematics {\bf 29} Cambridge University Press, Cambridge (1990). 

\bibitem[Kg]{Kg} Kawaguchi, S. : {\em Projective surface automorphisms of positive topological entropy from an arithmetic viewpoint}, Amer. J. Math. 
{\bf 130} (2008) 159--186.


\bibitem[Ka1]{Ka1} Kawamata, Y. : {\em Crepant blowing-up of $3$-dimensional canonical singularities and its application to degenerations of surfaces}, Ann. of Math. {\bf 127} (1988) 93--163. 


\bibitem[Ka2]{Ka2} Kawamata, Y. : {\em On the cone of divisors of Calabi-Yau fiber spaces}, Internat. J. Math. {\bf 8} (1997) 665--687. 

\bibitem[Ka3]{Ka3} Kawamata, Y. : {\em Flops connect minimal models}, Publ. Res. Inst. Math. Sci. {\bf 44} (2008) 419--423.

\bibitem[Kn]{Kn} Kondo, S. : {\em Enriques surfaces with finite automorphism groups},  Japan. J. Math. {\bf 12} (1986) 191--282. 


\bibitem[La]{La} Lazarsfeld, R. : {\em Positivity in algebraic geometry. I}, Ergebnisse der Mathematik und ihrer Grenzgebiete. 3. Folge. A
   Series of Modern Surveys in Mathematics, 48, 2004, xviii+387, Springer-Verlag.


\bibitem[LO]{LO} Lee, N.-H., Oguiso, K. : {\em Connecting certain rigid birational non-homeomorphic Calabi-Yau threefolds via Hilbert scheme}, Comm. Anal. Geom. {\bf 17} (2009) 283--303.

\bibitem[Mag]{Mag} Magnus, W. : {\em Noneuclidean tesselations and their groups}, 1974, xiv+207, Pure and Applied Mathematics, Vol. 61, Academic Press.

\bibitem[Mar]{Mar} Markman, E. : {\em A survey of Torelli and monodromy results for holomorphic-symplectic varieties}, Complex and differential geometry, 257--322, Springer Proc. Math. , 8, Springer Heidelberg, 2011,  ArXiv
1101.4606. 

\bibitem[Mc1]{Mc1} McMullen, C. T. : {\em Dynamics on $K3$ surfaces: Salem numbers and Siegel disks}, J. Reine Angew. Math. {\bf 545} (2002) 201--233. 

\bibitem[Mc2]{Mc2} McMullen, C. T. : {\em Dynamics on blowups of the projective plane}, Publ. Math. Inst. Hautes \'Etudes Sci. {\bf 105} (2007) 49--89. 

\bibitem[Mc3]{Mc3} McMullen, C. T. : {\em Hausdorff dimension and conformal dynamics. {III}.   {C}omputation of dimension}, 
Amer. J. Math.
{\bf 120} (1998) 691--721.

\bibitem[Mo]{Mo} Morrison, D.:{\em Compactifications of moduli spaces inspired by mirror symmetry}, Journ\'ees de g\'eom\'etrie alg\'ebrique d'Orsay (Orsay, 1992). Ast\'erisque {\bf 218} (1993) 243--271.

\bibitem[MN]{MN} Mukai, S., Namikawa, Y. : {\em Automorphisms of Enriques surfaces which act trivially on the cohomology groups}, Invent. Math. 
{\bf 77} (1984) 383--397.

\bibitem[Ni]{Ni} Nikulin, V. V. {\em Discrete reflection groups in Lobachevsky spaces and algebraic surfaces}, Proceedings of the International Congress of Mathematicians, Vol. 1, 2 (Berkeley, Calif., 1986), 654--671, Amer. Math. Soc., Providence, RI, 1987. 

\bibitem[Og]{Og} Oguiso, K.   : {\em Local families of {$K3$} surfaces and applications}, J. Algebraic Geom. {\bf{12}} (2003), p. 405--433.

\bibitem[OS]{OS} Oguiso, K., Schr\"oer, S. : {\em Enriques manifolds}, J. Reine Angew. Math. 661 (2011) 215--235.

\bibitem[PS]{PS} Prendergast-Smith, A. : {\em The cone conjecture for abelian varieties},  J. Math. Sci. Univ. Tokyo 19 (2012) 243--261.

\bibitem[Rat]{Rat}  Ratcliffe, J. G. : {\em Foundations of hyperbolic manifolds}, Graduate Texts in Mathematics {\bf{ 149}} (2006), Springer Verlag.
   		
\bibitem[Sil]{Sil} Silverman, J. H. : {\em Rational points on $K3$ surfaces: a new canonical height}, Invent. Math. {\bf 105} (1991) 347--373. 

\bibitem[To]{To} Totaro, B.. :{\em The cone conjecture for Calabi-Yau pairs in dimension two}, Duke Math. J. {\bf 154} (2010) 241--263.

\bibitem[Vi]{Vi} Vinberg, \'E. B. : {\em Hyperbolic reflection groups}, Russian Math. Surveys {\bf 40} (1985) 31--75. 

\bibitem[Vo]{Vo} Voisin, C. : {\em Hodge theory and complex algebraic
geometry II}, Cambridge University Press, 2003.

\bibitem[We]{We} Wehler, J. : {\em $K3$-surfaces with Picard number $2$}, 
Arch. Math. {\bf 50} (1988) 73--82. 

\bibitem[Wi]{Wi} Wilson, P. M. H. :{\em The role of $c_2$ in Calabi-Yau classification - a preliminary survey}, Mirror symmetry II, AMS/IP Stud. Adv. Math. 
{\bf 1} Amer. Math. Soc. Providence RI (1997) 381--392. 

\bibitem[Zh]{Zh} Zhang, D.-Q. :{\em Dynamics of automorphisms on projective complex manifolds}, J. Differential Geom. {\bf 82} (2009) 691--722. 
\end{thebibliography}
\end{document}